\def\ElsevierStyle
\newcommand{\Tau}{\mathcal{T}}
\newcommand{\Eh} {\mathcal{E}_h}
\newcommand{\vrtx}{\mathsf{v}}
\newcommand{\normV} [1]{\left|\!\left|#1\right|\!\right|_{\V}}
\newcommand{\fh}{f_h}
\definecolor{MyGray}{rgb}{0.61,0.61,0.61}
\definecolor{MyDarkGreen}{rgb}{0,0.45,0}
\newtheorem{theorem}{Theorem}[section]
\newtheorem{remark}[theorem]{Remark}
\newcommand{\BLUE}[1]{{\color{blue}#1}}
\newcommand{\RED}[1]{{\color{red}#1}} 
\def\trait #1 #2 #3 {\vrule width #1pt height #2pt depth #3pt}
\def\fin{\hfill
        \trait .3 5 0
        \trait 5 .3 0
        \kern-5pt
        \trait 5 5 -4.7
        \trait 0.3 5 0
\medskip}
\newenvironment{proof}{\textit{Proof.}}{\fin}
\def\P{{\mathbb P}}
\newcommand{\Vhrs}[1]{{(V^{p_2,p_1}_{h,#1})}^*}
\newcommand{\Vhrp} [1]{V^{p_2,p_1}_{h,#1}}
\newcommand{\VhPrp}[1]{V^{p_2,p_1}_{h,#1}(\P)}
\newcommand{\INTP}{\footnotesize{I}}
\newcommand{\REAL}{\mathbbm{R}}
\newcommand{\TERM}[1]{\textbf{(#1)}}
\newcommand{\TABROW}{}
\newcommand{\EOD}{\end{document}}
\newcommand{\xv}{\mathbf{x}}
\newcommand{\yv}{\mathbf{y}}
\newcommand{\as}{a}
\newcommand{\es}{e}
\newcommand{\fs}{f}
\newcommand{\qs}{q}
\newcommand{\us}{u}
\newcommand{\vs}{v}
\newcommand{\xs}{x}
\newcommand{\ys}{y}
\newcommand{\Cs}{C}
\newcommand{\Ds}{D}
\newcommand{\Ms}{M}
\newcommand{\Vs}{V}
\newcommand{\Vsp}{V^{\prime}}
\newcommand{\matA}{\mathsf{A}}
\newcommand{\matD}{\mathsf{D}}
\newcommand{\matI}{\mathsf{I}}
\newcommand{\matM}{\mathsf{M}}
\newcommand{\matQ}{\mathsf{Q}}
\newcommand{\matS}{\mathsf{S}}
\newcommand{\matU}{\mathsf{U}}
\newcommand{\LTWO}  {L^2}
\newcommand{\LINF}  {L^{\infty}}
\newcommand{\LS}[1] {L^{#1}}
\newcommand{\HS}[1] {H^{#1}}
\newcommand{\CS}[1] {C^{#1}}
\newcommand{\PS}[1] {\mathbbm{P}_{#1}}
\renewcommand{\P} {\textsf{P}}            
\newcommand  {\E} {e}
\newcommand  {\V} {\mathsf{v}}
\newcommand{\hh}{h}
\newcommand{\Th}{\Omega_{\hh}}
\newcommand{\hP}{\hh_{\P}}
\newcommand{\hE}{\hh_{\E}}
\newcommand{\hV}{\hh_{\V}}
\newcommand{\mP}{\ABS{\P}}
\newcommand{\Pset}{\mathcal{P}}    
\newcommand{\NMB}{N}
\newcommand{\NP}{\NMB^{\Pset}}   
\newcommand{\dV}{\,dV}
\newcommand{\dS}{\,ds}
\newcommand{\dx}{\,d\xv}
\newcommand{\fsh}{\fs_{\hh}}
\newcommand{\qsh}{\qs_{\hh}}
\newcommand{\ush}{\us_{\hh}}
\newcommand{\usI}{\us^{\INTP}}
\newcommand{\vsh}{\vs_{\hh}}
\newcommand{\asP}{\as_{p_1}^{\P}}
\newcommand{\ash}{\as_{\hh}}
\newcommand{\ashP}{\as_{\hh,\P}}
\newcommand{\bil} [2]{\big<#1,#2\big>}
\newcommand{\SP} {S^{\P}}
\newcommand{\nlen}{\hspace{-0.2mm}}
\newcommand{\snorm}  [2]{|#1|_{#2}}
\newcommand{\norm}   [2]{|\nlen|#1|\nlen|_{#2}}
\newcommand{\ABS}    [1]{\left|#1\right|}
\newcommand{\PiPr}[1]{\Pi^{p_1,\P}_{#1}} 
\newcommand{\Pizr}[1]{\Pi^{0,\P}_{#1}}  
\newcommand{\restrict}[2]{{#1}_{|{#2}}}
\newcommand{\Ndofs}{\NMB^{\textrm{dofs}}}
\newcommand{\DOFS}{\textsf{dofs}}
\begin{document}

\begin{frontmatter}
  
  \title{A review on arbitrarily regular conforming virtual element
    methods for elliptic partial differential equations}
  
  \author[MOX]  {P.~F.~Antonietti}
  \author[IMATI]{G.~Manzini}
  \author[UNIMI]{S.~Scacchi}
  \author[MOX]  {M.~Verani}

  \address[MOX]{MOX, Dipartimento di Matematica, Politecnico di
    Milano, Italy}
  
  \address[UNIMI]{Dipartimento di Matematica, Universit\`a degli Studi
    di Milano, Italy}

  \address[IMATI]{IMATI, Consiglio Nazionale delle Ricerche, Pavia,
    Italy }


  \begin{abstract}
    The Virtual Element Method is well suited to the formulation of
    arbitrarily regular Galerkin approximations of elliptic partial
    differential equations of order $2p_1$, for any integer $p_1\geq
    1$.
    In fact, the virtual element paradigm provides a very effective
    design framework for conforming, finite dimensional subspaces of
    $\HS{p_2}(\Omega)$, $\Omega$ being the computational domain and
    $p_2\geq p_1$ another suitable integer number.
    In this study, we first present an abstract setting for such
    highly regular approximations and discuss the mathematical details
    of how we can build conforming approximation spaces with a global
    high-order continuity on $\Omega$.
    Then, we illustrate specific examples in the case of second- and
    fourth-order partial differential equations, that correspond to
    the cases $p_1=1$ and $2$, respectively.
    Finally, we investigate numerically the effect on the
    approximation properties of the conforming highly-regular method
    that results from different choices of the degree of continuity of
    the underlying virtual element spaces and how different
    stabilization strategies may impact on convergence.
  \end{abstract}

  \begin{keyword}
    Virtual Element method,
    arbitrarily regular conforming approximation spaces,
    partial differential equations
  \end{keyword}

\end{frontmatter}


\section{Introduction}
\label{sec1:intro}
In the recent years, there has been an intensive research on numerical
approximations of partial differential equations (PDEs) that can work
on unstructured polygonal and polyhedral (polytopal, for short)
meshes.
Such research activity has led to the design of several families of
numerical discretizations for PDEs, as, for example,
the polygonal/polyhedral finite element
method~\cite{Sukumar-Tabarraei:2004};
the mimetic finite difference
method~\cite{BeiraodaVeiga-Lipnikov-Manzini:2014};
the virtual element method
(VEM)~\cite{BeiraodaVeiga-Brezzi-Cangiani-Manzini-Marini-Russo:2013};
the discontinuous Galerkin method on polygonal/polyhedral
grids~\cite{Antonietti-Giani-Houston:2013,Cangiani-Dong-Georgoulis-Houston:2017};
the hybrid discontinuous Galerkin
method~\cite{Cockburn-Dong-Guzman:2008}; and the
hybrid high--order method~\cite{DiPietro-Droniou:2020}.
Roughly speaking, all these methods are Galerkin-type projection
methods where the solution of a PDE is approximated in a finite
dimensional space that is built upon an underlying mesh made of
arbitrarily-shaped polytopal elements.
In this sense, all such methods can be considered as a generalization
of the finite element method that is formulated on classical
simplicial and quadrilateral meshes.

In particular, the virtual element method, which is the focus of our
paper, has been proven to be very successful in numerical modeling of
scientific and engineering applications.
The conforming VEM was first developed for second-order elliptic
problems in primal
formulation~\cite{BeiraodaVeiga-Brezzi-Cangiani-Manzini-Marini-Russo:2013,BeiraodaVeiga-Brezzi-Marini-Russo:2016b},
and then in mixed
formulation~\cite{BeiraodaVeiga-Brezzi-Marini-Russo:2016c,Brezzi-Falk-Marini:2014}
and nonconforming
formulation~\cite{AyusodeDios-Lipnikov-Manzini:2016}.
A non-exhaustive list of applications
includes
the numerical approximation of
underground flows and discrete fracture
networks~\cite{Benedetto-Berrone-Borio-Pieraccini-Scialo:2014,Benedetto-Berrone-Borio:2016};
propagation and scattering of time-harmonic
waves~\cite{Perugia-Pietra-Russo:2016,Mascotto-Perugia-Pichler:2019a};
topology optimization
problems~\cite{Antonietti-Bruggi-Scacchi-Verani:2017,Chi-Pereira-Menezes-Paulino:2020};
contact mechanics and elasto-plastic deformation
problems~\cite{Wriggers-Rust-Reddy:2016};
phase-field models of isotropic brittle
fractures~\cite{Aldakheel-Hudobivnik-Hussein-Wriggers:2018};
the Schrodinger
equation~\cite{Certik-Gardini-Manzini-Vacca:2018:ApplMath:journal};
obstacle~\cite{Wang-Wei:2018} and minimal surface
problems~\cite{Antonietti-Bertoluzza-Prada-Verani:2020};
nonlocal reaction–diffusion systems describing the cardiac electric
field~\cite{Anaya-Bendahmane-Mora-Sepulveda:2019};
cracks in materials~\cite{Benvenuti-Chiozzi-Manzini-Sukumar:2019};
structural mechanics
problems~\cite{ArtiolideMirandaLovadinaPatruno:2017,Artioli-deMiranda-Lovadina-Patruno:2018};
elastic wave propagation
phenomena~\cite{Park-Chi-Paulino:2019,Park-Chi-Paulino:2020,Antonietti-Manzini-Mazzieri-Mourad-Verani:2021}.
The major reason of this success is that the VEM offers a great
flexibility in designing approximation spaces featuring important
properties other than just supporting polytopal meshes.
Indeed, the VEM features great flexibility in dealing with internal
constraints (e.g., locking phenomena) and in designing ad-hoc
approximation spaces that preserve fundamental properties of the
underlying physical and mathematical models (e.g., incompressibility
constraint).
It is worth mentioning the construction of virtual element spaces
forming de Rham complexes for the Stokes
equations~\cite{BeiraodaVeiga-Lovadina-Vacca:2016}, the Navier-Stokes
equations~\cite{BeiraodaVeiga-Lovadina-Vacca:2018} and the Maxwell
equations~\cite{BeiraodaVeiga-Brezzi-Marini-Russo:2016a}, where the
numerical approximation of the velocity field or the magnetic flux
field is pointwise divergence free as a consequence of the de Rham
inequality chain.
Another remarkable example is provided by the VEM for Helmholtz
problems~\cite{Mascotto-Perugia-Pichler:2019} based on non-conforming
approximation spaces of Trefftz functions, i.e., functions that belong
to the kernel of the Helmholtz operator.

\medskip
In this work, we are interested in the construction of virtual element
spaces with global arbitrarily high smoothness (regularity).
We review the related literature in more details in the next section
as it is the central topic of the present study.
High regularity of the numerical approximation is of primary
importance when dealing with high-order differential problems, i.e.,
problems involving partial differential equations of order $2p_1$,
$p_1\geq 1$, and offers clear advantages even for $p_1=1$, i.e., in
the context of second-order differential equations.
Indeed, global smoothness can be useful to post-process physical
quantities (such as fluxes, strains, stresses), to build exact
discrete Stokes complexes, and to develop anisotropic error estimators
based on the Hessian.
More precisely, the virtual element framework allows us to design
finite dimensional subspaces of $\HS{p_2}(\Omega)$ for some suitable
integer number $p_2\geq1$.
Here, the integer $p_2$ determines the global regularity of the
virtual element functions defined on the computational domain
$\Omega$.
Indeed, the Sobolev Embedding Theorem~\cite{Adams-Fournier:2003}
implies that such functions also belong to $\CS{p_2-1}(\Omega)$ if
$\Omega$ is a bounded, open subset of $\REAL^2$ with a Lipschitz
boundary $\Gamma$ (or with the boundary $\Gamma$ satisfying the ``cone
condition'').
The value of $p_2$ obviously depends on the problem and the numerical
approximation at hand and we will always assume that $p_2\geq p_1$.

\medskip
In the "\emph{classical}" conforming Finite Element Method (FEM), the finite
dimensional spaces are typically only
$\CS{0}$-continuous~\cite{Ciarlet:2002}, and the definition of more
regular approximation spaces is usually considered a difficult task
from both the theoretical and computational viewpoints.
The major difficulty in the formulation of a $\CS{1}$-regular FEM
relies in the explicit construction of a set of basis functions with
such global
regularity~\cite{Argyris-Fried-Scharpf:1968,Bell:1969,Clough-Tocher:1965}.
The remarkable aspect that makes the VEM so appealing in this respect
is that the formulation of such arbitrary regular approximations and
their implementation are relatively straightforward.
The crucial point here is that in the virtual element setting we do
not need to know explicitly the shape functions spanning the virtual
element space.
All the virtual element functions are indeed \emph{virtual} in the
sense that they are implicitly defined as the solution of a local
partial differential equation inside each mesh element.
Consequently, such functions are not explicitly known, with the
noteworthy exception of some subset of polynomials.
Instead, they are uniquely defined by a set of values dubbed the
\emph{degrees of freedom} and these values are the only knowledge that
we really need to formulate and implement the numerical scheme.
This feature makes the construction of arbitrary regular
approximations for any kind of partial differential equations much
simpler and almost immediate.

\medskip
Our first goal in this study is to provide a comprehensive overview of
the state of the art of highly regular conforming virtual element
approximations of PDEs of order $2p_1$, $p_1\geq 1$.
Our second aim is to investigate the influence of different
stabilization strategies on the performance of highly-regular virtual
element discretizations in terms of the condition number of the
resulting linear system of equations and accuracy of the approximation scheme.
For the numerical validation, we focus on two model problems: the
Poisson equation and the biharmonic equation in two spatial
dimensions.
In the next subsection, we provide an overview of the literature
related to arbitrarily regular VEM discretizations.

\subsection{Background material on arbitrarily regular virtual element formulations}

The first work on a $\CS{1}$-regular conforming VEM addressed the
classical plate bending problem~\cite{Brezzi-Marini:2013}.
In such a work, a $\CS{1}$-regular virtual element method is proposed
and analysed for the numerical discretization of the Kirchhoff–Love
model for thin plates.
The approximation error is theoretically proved to decay in the energy
norm, i.e., the $\HS{2}$ norm, with the optimal rate $r-1$, $r\geq2$,
if the local virtual element spaces contain the space of polynomials
of degree $r$.
Optimal errors estimates in both $H^1$ and $L^2$ norms have been
derived later using duality arguments~\cite{Chinosi-Marini:2016}.
Successively, an arbitrarily regular virtual element approximation was
developed for second-order elliptic problems in two-dimensions by
using similar concepts~\cite{BeiraodaVeiga-Manzini:2014} and then
applied to the design of residual-based a-posteriori error
estimators~\cite{BeiraodaVeiga-Manzini:2015}.
A low-order variant of this method was considered for the
semi-discrete approximation of the two-dimensional nonlinear
Cahn-Hilliard
problem~\cite{Antonietti-BeiraodaVeiga-Scacchi-Verani:2016}.
Such VEM needs only three degrees of freedom per mesh vertex and turns
out to be a new discretization also on triangular grids.
Recently, highly regular virtual element spaces have also been
considered for the numerical resolution of the von {K}\'arm\'an
equation modelling the deformation of very thin
plates~\cite{Lovadina-Mora-Velasquez:2019}.
Here, the model under consideration is a fourth-order system of
nonlinear partial differential equations where the unknowns describe
the transverse displacement and the boundary stresses of the plate.
The resulting conforming formulation is shown to be well-posed through
a Banach fixed-point argument provided that the mesh size is small enough, and
optimal errors bounds are proved when the error is measured in the
$\HS{2}$ norm.
Highly-regular conforming VEMs have been recently proposed and
analyzed for general polyharmonic boundary value
problems~\cite{Antonietti-Manzini-Verani:2019} of the form
$(-\Delta)^{p_1}\us=\fs$, $p_1\geq 1$.
The virtual element space of this method contains polynomials of
degree $r \geq 2p_1-2$, features $\CS{p_1-1}$ global regularity and
guarantees optimal approximation bounds in suitable norms, i.e., with
the above introduced notation it corresponds to the choice $p_2=p_1$.
This approach is an extension of the known virtual element
discretization of second- and forth-order problems since the
approximation spaces for $p_1=p_2=1$ and $p_1=p_2=2$ coincide with the
conforming virtual element spaces for the Poisson
equation~\cite{BeiraodaVeiga-Brezzi-Cangiani-Manzini-Marini-Russo:2013}
and the biharmonic equation~\cite{Brezzi-Marini:2013}, respectively.

\medskip
All the previously mentioned works focus onto two-dimensional
mathematical models.
The first highly regular VEM in the three-dimensional setting
addresses the fourth-order linear elliptic
equation~\cite{BeiraodaVeiga-Dassi-Russo:2020}..
The lowest order case requires a virtual element space locally
including quadratic polynomials, i.e., $r=2$.
The degrees of freedom are the values of the virtual element functions
and their gradients at the mesh vertices.

\medskip
A highly regular virtual element method has also been designed for
solving the eigenvalue problem modelling the two-dimensional plate
vibration problem of Kirchhoff
plates~\cite{Mora-Rivera-Velasquez:2018}.
For the resulting spectral problem, the lowest-order
$\HS{2}(\Omega)$-conforming VEM provides the correct spectral
approximation and optimal-order error estimates are derived for the
approximation of the eigenvalues and the eigenfunctions.
Along the same line, a fourth-order spectral problem derived from the
transmission eigenvalue problem is considered in
Reference~\cite{Mora-Velasquez:2018}.
Its variational formulation is written in
$\HS{2}(\Omega)\times\HS{1}(\Omega)$ and the resulting virtual element
approximation is $\HS{2}(\Omega)\times\HS{1}(\Omega)$-conforming.
Employing the classical approximation theory for compact
non-self-adjoint operators, it is shown that the resulting VEM
provides a correct approximation of the spectrum, and the eigenvalues
and eigenfunctions are approximated with the expected (optimal) rates.
The fourth-order plate buckling eigenvalue problem has recently been
addressed~\cite{Mora-Velasquez:2020}.
Here, a $\CS{1}$-regular virtual element method of arbitrary order
$r\geq2$ is used to approximate the buckling coefficients and modes.
This virtual element space is an extension of the approximation space
introduced in References~\cite{Brezzi-Marini:2013}
and~\cite{Antonietti-BeiraodaVeiga-Scacchi-Verani:2016}.
In view of the Babu\v{s}ka--Osborn abstract spectral approximation
theory ~\cite{Babuska-Osborn:1991}, this VEM provides a correct
approximation of the spectrum.
Optimal-order error estimates for the buckling modes and the buckling
coefficients are derived.

\medskip
Finally, it is worth mentioning that in the context of fourth- or
higher-order problems, alternative strategies based on
\emph{non-conforming} approaches are also viable and have been
addressed in the recent literature.
For example, for the biharmonic problem we find $C^0$
\emph{non-conforming} ~\cite{Zhao-Chen-Zhang:2016} and \emph{fully
non-conforming}~\cite{Antonietti-Manzini-Verani:2018,Zhao-Zhang-Chen-Mao:2018}
virtual element approximations, and for higher-order PDEs in $\REAL^n$
we find non-conforming VEM~\cite{Chen-Huang:2020}.
A unified general framework including the lowest-order conforming
VEM~\cite{Brezzi-Marini:2013} and non-conforming
VEM~\cite{Zhao-Chen-Zhang:2016,Antonietti-Manzini-Verani:2018,Zhao-Zhang-Chen-Mao:2018}
has also been proposed and analyzed for the Kirchhoff plate contact
problem with friction~\cite{Wang-Zhao:2020}.
 
\subsection{Outline of the paper}
The remaining part of the manuscript is organized as follows.
In Section~\ref{sec2:continuous_pbl} we introduce the continuous
problem and its weak formulation.
In Section~\ref{sec3:discrete_pbl} we introduce the virtual element
discretization and recall the main abstract convergence result.
In Section~\ref{sec4:VEM} we present the conforming virtual element
approximation with higher-order continuity and recall the main
theoretical results for polyharmonic
problems~\cite{Antonietti-Manzini-Verani:2019}.
Moreover, employing the ideas of
Reference~\cite{BeiraodaVeiga-Manzini:2014}, we extend to the case
$r\geq p_2$ the construction of lower order spaces as considered in
Reference~\cite{Antonietti-Manzini-Verani:2019}.
Section~\ref{sec5:numerics} is devoted to present numerical
experiments for second- and fourth-order elliptic PDEs.
We assess the convergence properties of the VEM versus the mesh size and
the degree of continuity of the underling virtual element space for
different possible choices of the stabilization.
We also investigate numerically how these choices impact on the
condition number of the resulting linear system of equations.
Finally, in Section~\ref{sec6:conclusions} we draw our conclusions.
\section{The continuous problem}
\label{sec2:continuous_pbl}

In this section, we introduce the model problem under investigation
together with its weak formulation.
Let $\Omega\subset\REAL^2$ be an open, bounded, convex domain with
polygonal boundary $\Gamma$.
For any integer $p_1\geq 1$, we introduce the conforming virtual
element method for the approximation of the following problem:
\begin{subequations}
  \label{eq:poly:pblm:continuous}
  \begin{align}
    (-\Delta)^{p_1}\us &= \fs\phantom{0}\qquad\text{in~}\Omega,\label{eq:poly:pblm:1}\\
    \partial^j_n\us   & = 0\phantom{\fs}\qquad\text{for~}j=0,\ldots,p_1-1\text{~on~}\Gamma,\label{eq:poly:pblm:2}
  \end{align}
\end{subequations}
where $\partial^j_n\us$ is the normal derivative of order $j$ of the
function $\us$ with useful conventional notation that
$\partial^0_n\us=\us$.
Let
\begin{align*}
  \Vs\equiv\HS{p_1}_{0}(\Omega) =
  \big\{\vs\in\HS{p_1}(\Omega):\partial^j_n\vs=0\text{~on~}\Gamma,\,j=0,\ldots,p_1-1\big\}.
\end{align*}
Denoting the duality pairing between $\Vs$ and its dual $\Vsp$ by
$\bil{\cdot}{\cdot}$, the variational formulation of the polyharmonic
problem \eqref{eq:poly:pblm:continuous} reads as:
\emph{Find $\us\in\Vs$ such that}
\begin{align}
  \label{eq:poly:pblm:wp}
  \as_{p_1}(\us,\vs) = \bil{\fs}{\vs} \qquad\forall\vs\in\Vs,
\end{align}
where, for any nonnegative integer $\ell$, the bilinear form is given
by:
\begin{align}
  \as_{p_1}(\us,\vs) = 
  \begin{cases}
    \,\int_{\Omega} \nabla\Delta^\ell\us\cdot\nabla\Delta^\ell\vs\,\dx  & \mbox{for~$p_1=2\ell+1$},\\[1em]
    \,\int_{\Omega} \Delta^\ell\us\,\Delta^\ell\vs\,\dx                 & \mbox{for~$p_1=2\ell$}.
  \end{cases}
\end{align}
Whenever $\fs\in\LTWO(\Omega)$ we have
\begin{align}
  \bil{\fs}{\vs} = (\fs,\vs) = \int_{\Omega}\fs\vs\dV\,\dx
  \label{eq:poly:pblm:p-rhs}
\end{align}
where $(\cdot,\cdot)$ denotes the $\LTWO$-inner product.
The existence and uniqueness of the solution to
\eqref{eq:poly:pblm:wp} follows from the Lax-Milgram Theorem because
of the continuity and coercivity of the bilinear form
$\asP(\cdot,\cdot)$ with respect to
$\|\cdot\|_V=\vert\cdot\vert_{p_1,\Omega}$, which is a norm on
$\HS{p_1}_{0}(\Omega)$.
Moreover, since $\Omega$ is a convex polygon, from
Reference~\cite{Gazzola-Grunau-Sweers:1991} we know that
$\us\in\HS{2p_1-m}(\Omega)\cap\HS{p_1}_{0}(\Omega)$ if
$\fs\in\HS{-m}(\Omega)$, $m\leq p_1$ and it holds that
$\norm{\us}{2p_1-m}\leq\Cs\norm{\fs}{-m}$.
In the following, we denote the coercivity and continuity constants of
$\as_{p_1}(\cdot,\cdot)$ by $\alpha$ and $\Ms$, respectively.

Let $\P$ be a polygonal element and denote by
$a_{p_1}^{\P}(\cdot,\cdot)$ the restriction of $a_{p_1}(\cdot,\cdot)$
to $\P$.
For an odd $p_1$, i.e., $p_1=2\ell+1$, a repeated application of the
integration by parts formula yields
\begin{align} 
  \asP(\us,\vs) 
  =
  & -\int_{\P} \Delta^{p_1}\us\,\vs\,\dx + \int_{\partial\P}\partial_n(\Delta^\ell\us)\,\Delta^\ell\vs\dS\nonumber \\[0.5em]
  &  +\sum_{i=1}^\ell
  \left( 
    \int_{\partial\P}\partial_n(\Delta^{p_1-i}\us)\,\Delta^{i-1}\vs\dS
    -\int_{\partial\P}\Delta^{p_1-i}\us\,\partial_n(\Delta^{i-1}\vs)\dS
  \right),
  \label{eq:poly:intbyparts:odd:p}
\end{align}
while, for an even $p_1$, i.e., $p_1=2\ell$, we have
\begin{align} 
  \asP(\us,\vs) 
  &= \int_{\P}\Delta^{p_1}\us\,\vs\,\dx
  \nonumber\\[0.5em]
  &\phantom{=}
  + \sum_{i=1}^\ell
  \left(
  \int_{\partial\P}  \partial_n(\Delta^{p_1-i}\us)\,\Delta^{i-1}\vs\,\dS
  -\int_{\partial\P} \Delta^{p_1-i}\us\,\partial_n(\Delta^{i-1}\vs)\,\dS
  \right).
  \label{eq:poly:intbyparts:even:p}
\end{align}
The above formulas will be crucial to prove the unisolvence of the degrees of freedom of the virtual element spaces and to show the computability of the elliptic projections (cf. Section~\ref{sec3:discrete_pbl}).

\section{The discrete problem and abstract convergence result}
\label{sec3:discrete_pbl}

In this section we present the discrete counterpart of formulation
\eqref{eq:poly:pblm:wp} and recall the abstract convergence result.
Let $\big\{\Th\big\}_{h}$ be a sequence of decompositions of $\Omega$
where each mesh $\Th$ is a collection of nonoverlapping polygonal
elements $\P$ with boundary $\partial\P$, and let $\Eh$ be the set of
edges $\E$ of $\Th$.
Each mesh is labeled by $\hh$, the diameter of the mesh, defined as
usual by $\hh=\max_{\P\in\Th}\hP$, where
$\hP=\sup_{\xv,\yv\in\P}\vert\xv-\yv\vert$.
We denote the set of vertices in $\Th$ by $\mathcal{V}_h$.
The symbol $\hV$ denotes the average of the diameters of the polygons
sharing the vertex $\vrtx$.
For functions in $\Pi_{\P\in\Th}\HS{p_1}(\P)$, we define the seminorm
$\norm{\vs}{\hh}^2=\sum_{\P\in\Th}a_{p_1}^{\P}(\vs,\vs)$.

\medskip
The formulation of the virtual element method for the approximation of
the solution to the elliptic problem~\eqref{eq:poly:pblm:wp} with
arbitrarily smooth functions only requires three mathematical objects:
\begin{enumerate}
\item for $p_2\geq p_1 \geq 1$ the finite dimensional conforming
  virtual element space $\Vhrp{r}\subset\HS{p_2}_0(\Omega)\subset
  \Vs$;
\item the bilinear form $a_{p_1,h}(\cdot,\cdot)$;
\item the linear functional $\langle\fsh,\cdot\rangle$.
\end{enumerate}
Note that the space $\Vhrp{r}$ is made of globally $\CS{k}$ functions
with $k=p_2-1$ and, endowed with suitable degrees of freedom, will be
employed to solve elliptic problems of order $p_1\leq p_2$.

Using such objects, we formulate the VEM as: \emph{Find
$\ush\in\Vhrp{r}$ such that}
\begin{align}
  a_{p_1,h}(\ush,\vsh) = \bil{f_h}{\vsh}
  \quad\forall \vsh\in\Vhrp{r}.
  \label{eq:poly:VEM}
\end{align}
The well-posedness of~\eqref{eq:poly:VEM}, which implies existence and
uniqueness of the solution $\ush$, is a consequence of the Lax-Milgram
lemma.
An abstract convergence result is available, which depends only on the
following assumptions:
\begin{description}
\item [\textbf{(H1)}] for each $h$ and an assigned integer number
  $r\geq p_2$ we are given:
  \begin{enumerate}
    \medskip
  \item the \emph{global} virtual element space $\Vhrp{r}$ with the
    following properties:
    \begin{description}
    \item[-] $\Vhrp{r}$ is a finite dimensional subspace of
      $H^{p_2}_{0}(\Omega)$ and it is made of $C^{k}$ functions with
      $k=p_2-1$;
    \item[-] its restriction $\VhPrp{r}$ to any element $\P$ of a
      given mesh $\Th$, called the \emph{local} (elemental) virtual
      element space, is a finite dimensional subspace of
      $H^{p_2}(\P)$;
    \item[-] {$\PS{r}(\P) \subset \VhPrp{r}$ where $\PS{r}(\P)$ is the
      space of polynomials of degree up to {$r$} defined on $\P$};
    \end{description}
    
    \medskip
  \item the symmetric and coercive bilinear form
    $\as_{p_1,h}:\Vhrp{r}\times\Vhrp{r}\to\mathbbm{R}$ admitting the
    decomposition
    \begin{align*}
      \as_{p_1,h}(\ush,\vsh) = \sum_{\P\in\Th}a^{\P}_{p_1,h}(\ush,\vsh)
      \quad\forall \ush,\,\vsh\in\Vhrp{r},
    \end{align*}
    where each local summation term $a^{\P}_{p_1,h}(\cdot,\cdot)$ is
    also a symmetric and coercive bilinear form;
    
    \medskip
  \item an element $\fsh$ of the dual space $\Vhrs{r}$ of $\Vhrp{r}$,
    which allows us to define the continuous linear functional
    $\bil{f_h}{\cdot}$.
  \end{enumerate}

  \medskip
\item [\textbf{(H2)}] for each $h$ and {each} mesh element $\P\in\Th$,
  the local symmetric bilinear form ${a^{\P}_{p_1,h}(\cdot, \cdot)}$
  possesses the two following properties:
  \begin{description}
  \item[$(i)$] $r$-\textbf{Consistency}: for every polynomial
    $q\in\PS{r}(\P)$ and virtual element function
    $\vsh\in\Vhrp{r}(\P)$ it holds:
    \begin{align}
      a^{\P}_{p_1,h}(\vsh,q) = a_{p_1}^{\P}(\vsh,q);
      \label{eq:poly:r-consistency}
    \end{align}
  \item[$(ii)$] \textbf{Stability}: there exist two positive constants
    $\alpha_*$, $\alpha^*$ independent of $h$ and $\P$ such that for
    every $\vsh\in\VhPrp{r}$ it holds:
    \begin{align}
      \alpha_*\as_{p_1,h}^{\P}(\vsh,\vsh)
      \leq\as_{p_1,h}^{\P}(\vsh,\vsh)\leq
      \alpha^*\as_{p_1}^{\P}(\vsh,\vsh).
      \label{eq:poly:stability}
    \end{align}
  \end{description}

\end{description}
It is easy to check that ${a_{p_1,h}(\cdot, \cdot)}$ is coercive and
continuous.
Let $\PS{r}(\Th)$ denote the space of piecewise (possibly
discontinuous) polynomials defined over the mesh $\Th$.
The following abstract convergence result holds.

\medskip
\begin{theorem}
  \label{theorem:poly:abstract:energy:norm}
  Let $\us$ be the solution of the variational
  problem~\eqref{eq:poly:pblm:wp}.
  Then, for every virtual element approximation $\usI$ in $\Vhrp{r}$
  and any piecewise polynomial approximation $\us_{\pi}\in\PS{r}(\Th)$
  of $\us$ we have:
  \begin{align}
    \normV{\us-\ush}\leq C
    \Big(
    \normV{\us-\usI} + \norm{\us-\us_{\pi}}{h} + \norm{\fh-\fs}{\Vhrs{r}}
    \Big),
    \label{eq:poly:abstract:energy:norm}
  \end{align}
  where $C$ is a constant independent of $\hh$ that may depend on
  $\alpha$, $\alpha_*$, $\alpha^*$, $M$, {and $r$,} and
  \begin{align}
    \norm{\fs-\fh}{\Vhrs{r}}
    = \sup_{\vsh\in\Vhrp{r}\backslash{\{0\}}}\frac{\bil{\fs-\fh}{\vsh}}{\normV{\vsh}}
  \end{align}
  is the approximation error of the right-hand side given in the norm
  of the dual space $\Vhrs{r}$.
\end{theorem}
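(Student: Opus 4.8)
The plan is to run the standard Strang/Céa-type argument, starting from the triangle inequality
\[
\normV{\us-\ush}\leq\normV{\us-\usI}+\normV{\ush-\usI},
\]
which reduces the task to estimating the discrete quantity $\delta_{\hh}:=\ush-\usI\in\Vhrp{r}$. First I would exploit coercivity of the discrete form: summing the lower stability bound in (H2)$(i)$ over the elements and then using the conformity $\Vhrp{r}\subset\Vs$ to identify the broken seminorm with the $\V$-norm, namely $\sum_{\P}\asP(\delta_{\hh},\delta_{\hh})=\as_{p_1}(\delta_{\hh},\delta_{\hh})\geq\alpha\normV{\delta_{\hh}}^2$, one obtains $\alpha_*\alpha\,\normV{\delta_{\hh}}^2\leq\as_{p_1,\hh}(\delta_{\hh},\delta_{\hh})$.

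Next I would expand $\as_{p_1,\hh}(\delta_{\hh},\delta_{\hh})$ using the discrete equation~\eqref{eq:poly:VEM}, i.e.\ $\as_{p_1,\hh}(\ush,\delta_{\hh})=\bil{\fh}{\delta_{\hh}}$, to get
\[
\as_{p_1,\hh}(\delta_{\hh},\delta_{\hh})=\bil{\fh}{\delta_{\hh}}-\sum_{\P}\as_{p_1,\hh}^{\P}(\usI,\delta_{\hh}).
\]
The decisive step is to process each $\as_{p_1,\hh}^{\P}(\usI,\delta_{\hh})$ by inserting the piecewise polynomial $\usp$, writing $\as_{p_1,\hh}^{\P}(\usI,\delta_{\hh})=\as_{p_1,\hh}^{\P}(\usI-\usp,\delta_{\hh})+\as_{p_1,\hh}^{\P}(\usp,\delta_{\hh})$ and invoking the $r$-consistency (H2)$(i)$ (with symmetry) to replace the last term by $\asP(\usp,\delta_{\hh})$, which is legitimate because $\usp|_{\P}\in\PS{r}(\P)$. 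I would then add and subtract $\us$ and use the conformity $\delta_{\hh}\in\Vs$ a second time to collapse the exact contribution through the continuous problem~\eqref{eq:poly:pblm:wp}, $\sum_{\P}\asP(\us,\delta_{\hh})=\as_{p_1}(\us,\delta_{\hh})=\bil{\fs}{\delta_{\hh}}$.

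Collecting terms, the two duality pairings combine into $\bil{\fh-\fs}{\delta_{\hh}}$, bounded by $\norm{\fh-\fs}{\Vhrs{r}}\normV{\delta_{\hh}}$ directly from the definition of the dual norm. The remaining contributions $\sum_{\P}\as_{p_1,\hh}^{\P}(\usI-\usp,\delta_{\hh})$ and $\sum_{\P}\asP(\usp-\us,\delta_{\hh})$ I would control by Cauchy--Schwarz: for the first I apply the Cauchy--Schwarz inequality of the symmetric, coercive discrete form followed by the upper stability bound in (H2)$(i)$, producing a factor $\alpha^*\norm{\usI-\usp}{\hh}$, and for the second the Cauchy--Schwarz inequality of the local energy products, producing $\norm{\usp-\us}{\hh}$. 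Dividing by $\normV{\delta_{\hh}}$ and using $\norm{\usI-\usp}{\hh}\leq\normV{\us-\usI}+\norm{\us-\usp}{\hh}$ bounds $\normV{\delta_{\hh}}$, and reinserting this into the opening triangle inequality yields~\eqref{eq:poly:abstract:energy:norm} with $C$ depending only on $\alpha$, $\alpha_*$, $\alpha^*$ and $M$.

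The one genuinely delicate point is the bookkeeping around consistency: since (H2)$(i)$ equates the discrete and continuous forms only when one argument is a polynomial, the insertion of $\usp$ must precede any use of the Galerkin identity, and the conformity $\Vhrp{r}\subset\HS{p_1}_0(\Omega)$ must be used twice — once to turn the broken energy seminorm of $\delta_{\hh}$ into $\normV{\delta_{\hh}}$, and once to replace $\sum_{\P}\asP(\us,\delta_{\hh})$ by $\bil{\fs}{\delta_{\hh}}$. Everything else reduces to routine applications of Cauchy--Schwarz and the stability constants.
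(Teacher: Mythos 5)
Your proposal is correct and follows essentially the same route as the paper's own proof: triangle inequality, stability/coercivity applied to $\delta_{\hh}=\ush-\usI$, the discrete equation~\eqref{eq:poly:VEM}, insertion of $\us_{\pi}$ with $r$-consistency, insertion of $\us$ with the continuous problem~\eqref{eq:poly:pblm:wp}, and Cauchy--Schwarz plus the stability bounds to close the estimate. The only blemishes are cosmetic: you cite the stability bound as (H2)$(i)$ when the paper labels it (H2)$(ii)$ (consistency being $(i)$), and you carry the continuous coercivity constant $\alpha$ explicitly where the paper simply identifies $\normV{\delta_{\hh}}^2$ with $a_{p_1}(\delta_{\hh},\delta_{\hh})$ --- neither affects the argument.
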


\begin{proof}
We report here the proof for
completeness~\cite{Antonietti-Manzini-Verani:2019}.
First, an application of the triangular inequality implies that:
  \begin{align}
    \normV{\us-\ush}\leq\normV{\us-\usI}+\normV{\usI-\ush}.
    \label{eq:poly:abstract:proof:00}
  \end{align}
  Let $\delta_h=\ush-\usI$.
  Starting from the definition of $\normV{\,\cdot\,}$, we find that:
  \begin{align*}
    \begin{array}{rll}
      &\alpha_*\normV{\delta_h}^2
      = \alpha_*a_{p_1}(\delta_h,\delta_h)                                                                                                              &\mbox{\big[use~\eqref{eq:poly:stability}\big]}   \nonumber\\[0.5em]
      &\quad\leq a_{p_1,h}(\delta_h,\delta_h)                                                                                                           &\hspace{-1.2cm}\mbox{\big[use~$\delta_h=\ush-\usI$\big]}         \nonumber\\[0.5em]
      &\quad\leq a_{p_1,h}(\delta_h,\ush)-a_{p_1,h}(\delta_h,\usI)                                                                                       &\mbox{\big[use~\eqref{eq:poly:VEM}\big]}          \nonumber\\[0.5em]
      &\quad\leq \bil{\fh}{\delta_h}-\sum_{\P\in\Th}a^{\P}_{p_1,h}(\delta_h,\usI)                                                                          &\mbox{[add $\pm\us_{\pi}$\big]}                    \nonumber\\[0.5em]
      &\quad\leq \bil{\fh}{\delta_h}-\sum_{\P\in\Th}\Big(a^{\P}_{p_1,h}(\delta_h,\usI-u_{\pi})+a^{\P}_{p_1,h}(\delta_h,u_{\pi})\Big)                            &\mbox{\big[use~\eqref{eq:poly:r-consistency}\big]}\nonumber\\[0.5em]
      &\quad\leq \bil{\fh}{\delta_h}-\sum_{\P\in\Th}\Big(a^{\P}_{p_1,h}(\delta_h,\usI-u_{\pi})+a^{\P}_{p_1}  (\delta_h,u_{\pi})\Big)                            &\mbox{[add $\pm\us$\big]}                         \nonumber\\[0.5em]
      &\quad\leq \bil{\fh}{\delta_h}-\sum_{\P\in\Th}\Big(a^{\P}_{p_1,h}(\delta_h,\usI-u_{\pi})+a^{\P}_{p_1}  (\delta_h,u_{\pi}-u) + a^{\P}_{p_1}(\delta_h,u)\Big) &\mbox{\big[use~\eqref{eq:poly:pblm:wp}\big]}      \nonumber\\[0.5em]
      &\quad=    \bil{\fh-f}{\delta_h}-\sum_{\P\in\Th}\Big(a^{\P}_{p_1,h}(\delta_h,\usI-u_{\pi})+a^{\P}_{p_1}  (\delta_h,u_{\pi}-u)\Big).
    \end{array}
  \end{align*}
  Then, we use~\eqref{eq:poly:stability}, add and subtract $\us$, use
  the continuity of $a^{\P}_{p_1}$, sum over all the elements $\P$,
  divide by $\normV{\delta_h}$, take the supremum of the right-hand
  side error term on $\Vhrp{r}\backslash{\{0\}}$, and obtain
  \begin{align}
     \alpha_*\normV{\delta_h} \leq
     \sup_{ \vsh\in\Vhrp{r}\backslash{ \{0\} } } \frac{\vert\bil{\fh-\fs}{\vsh}\vert}{\normV{\vsh}} 
     + M\left( \alpha^*\normV{\usI-\us} + (1+\alpha^*)\norm{\us-\us_{\pi}}{h} \right).
     \label{eq:poly:abstract:proof:15}
  \end{align}
  The assertion of the theorem follows by
  substituting~\eqref{eq:poly:abstract:proof:15}
  in~\eqref{eq:poly:abstract:proof:00} and suitably defining the
  constant $C$.
\end{proof}
\section{The virtual element spaces of higher-order continuity}
\label{sec4:VEM}

\subsection{Preliminaries}

The ``\emph{degrees of freedom tuples}'' are a very effective way to
characterize the \emph{set of degrees of freedom (\DOFS{})} that
uniquely identify the virtual element functions as members of a finite
dimensional subspace of a $\CS{k}$-regular virtual element space.
Our degrees of freedom tuple, abbreviated as ``\DOFS{}-tuple'', is a
generalization of the similar concept that was originally introduced
for the degrees of freedom of a nonconforming virtual element
space~\cite{Dedner-Hodson:2021}.
Our \DOFS{}-tuple is an array $M_k\in\mathbb{Z}^{2(k+1)+1}$ defined by
\begin{equation}
  \label{Mdofs}
  M_k = \Big(\,\big(d^\V_0,\ldots d^\V_k\big),\,\big(d^{\E}_0,\ldots,d^{\E}_k\big),\,d^i_0\,\Big).
\end{equation}
The integer variables $\big(d^\V_j\big)$ and $\big(d^{\E}_j\big)$, for
$j=0,\ldots,k$, respectively encode the information associated with
the mesh vertices and mesh edges; the last integer variable $d^i_0$
encodes the information associated with the interior of the mesh
elements $\P$.
The subscript $j=0$ in $d^{\V}_0$, $d^{\E}_0$, and $d^i_0$ indicates
that these variables refer to the virtual element function.
The subscript values $j=1,\ldots,k$ in $d^{\V}_j$ and $d^{\E}_j$
denote the reference to the partial derivatives
$\Ds^{\nu}=\partial^{\ABS{\nu}}\slash{\partial\xs^{\nu_1}\partial\ys^{\nu_2}}$
of order $\ABS{\nu}=\nu_1+\nu_2=j$ of the virtual element function
($\nu=(\nu_1,\nu_2)$ being a multi-index).
The vertex variables $d^\V_j$ can only take the values $-1$ or $0$,
while the edge variables $d^{\E}_j$ and the elemental variable $d^i_0$
either take the value $-1$ or a nonnegative integer value.
If the entry is equal to $-1$, the corresponding term is not used as a
degree of freedom.
If $d^{\V}_j=0$, the $j$-th order partial derivatives evaluated at the
mesh vertices are in the set of degrees of freedom (with the usual
convention that $\Ds^{\nu}\vsh(\V)=\vsh(\V)$ $\nu=(0,0)$, i.e.,
$j=0$).
A nonnegative value of $d^{\E}_0$ and $d^i_0$ defines the maximum
order of the polynomial moments used in the definition of the degrees
of freedom associated with the elemental edges $\E\in\partial\P$ and
the interior of the element $\P$.

\medskip
By using the \DOFS{}-tuple $M_k$, we define the following set of
values of a function $\vs\in\HS{k+1}(\P)$:
\begin{description}
\item\TERM{D1} $\hV^{|\nu|}\Ds^{\nu}\vsh(\V)$ at all vertices $\V$ of
  the polygonal boundary $\partial\P$, for every multi-index
  $\nu=(\nu_1,\nu_2)$ such that $\ABS{\nu}=j$ if $d^{\V}_j=0$,
  $j=0,\ldots,k$;

  \medskip  
\item\TERM{D2}
  $\displaystyle\hE^{-1+j}\int_{\E}\qs\partial^j_n\vsh\dS$ for any
  $\qs\in\PS{d^e_j}(\E)$, $j=0,\ldots,k$ and any edge $\E$ of
  $\partial\P$;

  \medskip
\item\TERM{D3} $\displaystyle\hP^{-2}\int_{\P}\qsh\vsh\,\dx$ for
  any $\qs\in\PS{d^i_0}(\P)$.
\end{description}

\subsection{Local and global spaces}

For $p_2\geq p_1\geq 1$ we first consider the case $r\geq 2p_2-1$,
while the lower order case $p_2 \leq r \leq 2p_2 -1$ will be addressed
in Section \ref{S:lower}.
The local virtual element space on element $\P$ is defined by
\begin{multline}\label{eq:vem-space-higher}
  \VhPrp{r} = \Big
  \{\vsh\in\HS{p_2}(\P):\,
  \Delta^{p_2}\vsh\in\PS{r-2p_1}(\P),\,
  \partial^i_n\vsh\in\PS{r-i}(\E),\,\\
  i=0,\ldots,p_2-1~\forall
  \E\in\partial\P
  \Big\},
\end{multline}
with the conventional notation that $\PS{-1}(\P)=\{0\}$.
The virtual element space $\VhPrp{r}$ contains the space of
polynomials $\PS{r}(\P)$, for $r\geq 2p_2-1$.

We take $k=p_2-1$ in \eqref{Mdofs} and endow the local space
$\VhPrp{r}$ with the \DOFS{}-tuple $M_{p_2-1} = M_{p_2-1}(p_1)$, which
depends on the parameter $p_1$ by setting
\begin{align*}
  & d^\V_{j}=0        \quad j=0,\ldots,p_2-1\\[0.5em]
  & d^e_{j}=r-2p_2+j  \quad j=0,\ldots,p_2-1\\[0.5em]
  & d^i_0=r-2p_1.
\end{align*}
Employing
\eqref{eq:poly:intbyparts:odd:p}-\eqref{eq:poly:intbyparts:even:p} it
is possible to prove that the degrees \TERM{D1}-\TERM{D3} defined
through the \DOFS{}-tuple $M_{p_2-1}(p_1)$ are unisolvent in
$\VhPrp{r}$, see Reference~\cite{Antonietti-Manzini-Verani:2019}.
The particular choice of $d^i_0$ is essential for the computability of
the elliptic projection with respect to $\asP(\cdot,\cdot)$, which is
a scalar product in $H^{p_2}_{0}(\Omega)$
cf. Remark~\ref{rem:elliptic_projection}.

Building upon the local spaces $\VhPrp{r}$ for all $\P\in\Th$, the
\emph{global} conforming virtual element space $\Vhrp{r}$ is defined
on $\Omega$ as
\begin{align}
  \Vhrp{r} = \Big\{
  \vsh\in\HS{p}_{{0}}(\Omega)\,:\,\restrict{\vsh}{\P}\in\VhPrp{r}\,\,\forall\P\in\Th
  \Big\}.
  \label{eq:poly:global:space}
\end{align}

The set of global degrees of freedom inherited by the local degrees of
freedom defined by $M_{p_2-1}(p_1)$ are:
\medskip
\begin{itemize}
\item $\hV^{|\nu|}\Ds^{\nu}\vsh(\V)$, $\ABS{\nu}\leq p_2-1$ for every
  interior vertex $\V$ of $\Th$;
  \medskip
\item $\displaystyle\hE^{-1+j}\int_{\E}\qs\partial^j_n\vsh\,ds$ for
  any $\qs\in\PS{r-2p_2+j}(e)$ $j=0,\ldots,p-1$ and every interior
  edge $\E\in\Eh$;
\medskip
\item $\displaystyle\hP^{-2}\int_{\P}\qs\vsh\dx$ for any
  $\qs\in\PS{r-2p_1}(\P)$ and every $\P\in\Th$.
\end{itemize}

We remark that the associated global space is made of
$\HS{p_2}(\Omega)$ functions.
Indeed, the restriction of a virtual element function $\vsh$ to each
element $\P$ belongs to $\HS{p_2}(\P)$ and glues with
$C^{p_2-1}$-regularity across the internal mesh faces.

\begin{remark}[Examples]
  We report some relevant examples from the virtual element literature
  that are included in the above abstract framework:
  \begin{itemize}
  \item for $p_1=p_2=1$, we obtain the $\CS{0}$-conforming virtual
    element space for the Poisson
    equation~\cite{BeiraodaVeiga-Brezzi-Cangiani-Manzini-Marini-Russo:2013};
  \item for $p_2=p_1=2$ we obtain the conforming virtual element space
    for the biharmonic equation~\cite{Brezzi-Marini:2013};
  \item for $p_1=1$ and $p_2=2$ we obtain the $\CS{1}$-conforming
    virtual element space for the Poisson
    equation~\cite{BeiraodaVeiga-Manzini:2014};
  \item For $p_1=1$ and $p_2=3$ we obtain the $\CS{2}$-conforming
    virtual element space for the Poisson
    equation~\cite{BeiraodaVeiga-Manzini:2014}.
  \end{itemize}
\end{remark}

\subsection{Lower-order virtual spaces}
\label{S:lower}
Lower-order elemental spaces~\cite{BeiraodaVeiga-Manzini:2014} can be
defined that contains the subspace of polynomials of degree up to $r$
with $p_2\leq r\leq 2p_2-2$:
\begin{multline}\label{eq:vem-space-lower}
  \VhPrp{r} = \Big\{\vsh\in\HS{p_2}(\P):\, \Delta^{p_2}\vsh \in
  \PS{r-2p_1}(\P),
  \,\partial^i_n\vsh\in\PS{\alpha_i}(\E),\,\\
  \,i=0,\ldots,p_2-1~\forall\E\in\partial\P
  \Big\},
\end{multline}
where $\alpha_j=\max\{2p_2-1-2j, r-j\}$. 

For $r = 2 p_2 -1-k$ with $k=0,1,\ldots,p_2-1$, the virtual element
functions in the elemental space \eqref{eq:vem-space-lower} are
uniquely identified by the degrees of freedom of the \DOFS{}-tuple
${M}_{p_2-1}(p_1)$ by setting
\begin{align*}
  & d^\V_{j}=0 \quad j=0,\ldots,p_2-1,         \\[0.5em]
  & d^\E_{j}=-1\quad j=0,\ldots,k,             \\[0.5em]
  & d^\E_{j}=\alpha_j \quad j=k+1,\ldots,p_2-1,\\[0.5em]
  & d^i_0=r-2p_1.
\end{align*}
Equivalently, 
\begin{description}
\item\TERM{D1} $\hV^{|\nu|}D^{\nu}\vsh(\V)$, $\ABS{\nu}\leq p_2-1$
  for any vertex $\V$ of $\partial\P$;
  
  \medskip
\item\TERM{D2}
  $\displaystyle\hE^{-1+j}\int_{\E}\qs\partial_{n}^j\vsh\dS$ for any
  $\qs\in\PS{\alpha_j}(\E)$ and edge $\E$ of $\partial\P$,
  $j=k+1,\ldots,p_2-1$.

  \medskip
\item\TERM{D3} $\displaystyle\hP^{-2}\int_{\P}\qs\vsh\dx$ for any
  $\qs\in\PS{r-2p_1}(\P)$ and every $\P\in\Th$.
\end{description}

The above set of degrees of freedom is unisolvent in $\VhPrp{r}$ and
allows the computability of the elliptic projection $\PiPr{r}$ with
respect to $\asP(\cdot,\cdot)$.
The global virtual element space $\Vhrp{r}$ is built as in the
previous section and is made of $C^{p_2-1}$ functions.
\begin{remark}
  The virtual space $\VhPrp{r}$ in \eqref{eq:vem-space-lower} for
  $r=2p_2-2$ has been first introduced in the work of
  Reference~\cite{Antonietti-Manzini-Verani:2019}, while the virtual
  element spaces for $p_2\leq r < 2p_2-2$ are new.
  In the lowest order case ($r=p_2$) the local virtual element space $
  \VhPrp{r}$ does not employ the \DOFS{} defined in \TERM{D2}, so the
  corresponding \DOFS{}-tuple is equal to:
  \begin{align*}
    {M}_{p_2-1}(p_1) =
    (0,\ldots,0,-1,\ldots,-1,d^i_0).
  \end{align*}
  In particular, for $p_1=p_2=2$ and $r=2$ we obtain the space
  introduced in
  Reference~\cite{Antonietti-BeiraodaVeiga-Scacchi-Verani:2016}
  for the conforming approximation of the Cahn-Hilliard equation.
  For $p_1=1$ and $r\geq p_2\geq 2$ we obtain the spaces introduced in
  Reference~\cite{BeiraodaVeiga-Manzini:2014} for the virtual element
  approximation of the Laplace problem with arbitrary regularity.
  The space $\VhPrp{p_2}$ with $p_1=1,2$ will be employed in
  Section~\ref{sec5:numerics} to perform numerical tests.
  Finally, we note that $r\geq 2p_2-1$ implies $\alpha_j=r-j$ and
  \eqref{eq:vem-space-lower} reduces to \eqref{eq:vem-space-higher}.
\end{remark}

\subsection{Projection operators and discrete bilinear forms}

The choice of $d_0^i$ in the \DOFS{}-tuple $M_{p_2-1}(p_1)$ is crucial
for the computability of the elliptic projection
$\PiPr{r}:\VhPrp{r}\to\PS{r}(\P)$, {\em with respect to}
$\asP(\cdot,\cdot)$.
This fact will become clear in the discussion below (see Remark
\ref{rem:elliptic_projection}).
To define the elliptic projection we need the \emph{vertex average
projector} $\widehat{\Pi}^{\P}:\VhPrp{r}\to\PS{0}(\P)$, which projects
any (smooth enough) function defined on $\P$ onto the space of
constant polynomials.
Let $\psi$ be a continuous function defined on $\P$.
The \emph{vertex average projection} of $\psi$ onto the constant
polynomial space is given by:
\begin{align}
  \widehat{\Pi}^{\P}\psi = \frac{1}{\NP}\sum_{\vrtx\in\partial\P}\psi({\vrtx}).
  \label{eq:trih:vertex:average:projection}
\end{align}
The elliptic projection $\PiPr{r}:\VhPrp{r}\to\PS{r}(\P)$ is the
solution of the finite dimensional variational problem:
\begin{align}
  \asP(\PiPr{r}\vsh,\qs)                &= \asP(\vsh,\qs)\phantom{ \widehat{\Pi}^{\P}\Ds^{\nu}\vsh }\forall\qs\in\PS{r}(\P),\label{eq:poly:Pi:A}\\[0.5em]
  \widehat{\Pi}^{\P}\Ds^{\nu}\PiPr{r}\vsh &= \widehat{\Pi}^{\P}\Ds^{\nu}\vsh\phantom{\asP(\vsh,\qs)} \ABS{\nu}\leq{p_2-1}.      \label{eq:poly:Pi:B}
\end{align}

Employing \eqref{eq:poly:intbyparts:odd:p}-
\eqref{eq:poly:intbyparts:even:p}, in
Reference~\cite{Antonietti-Manzini-Verani:2019} it is shown that such
operator has two important properties:
\begin{itemize}
\item[$(i)$] it is polynomial-preserving in the sense that
  $\PiPr{r}\qs=\qs$ for every $\qs\in\PS{r}(\P)$;
\item[$(ii)$] the polynomial projection $\PiPr{r}\vsh$ is
  \emph{computable} using only the degrees of freedom of
  $\vsh\in\VhPrp{r}$ that are specified by the \DOFS{}-tuple
  $M_{p_2-1}(p_1)$.
\end{itemize}

\begin{remark}[On the role of $ d_0^i$ in the computability of $\PiPr{r}$]
  \label{rem:elliptic_projection}
  We report a simple, but instructive example to clarify that the
  computability of $\PiPr{r}$ is related to the interplay between the
  parameter $p_1$ (dictating the scalar product employed in the
  definition of the elliptic projection) and the degrees of freedom
  specified by \DOFS{}-tuple $M_{p_2-1}(p_1)$.

  For $p_1=1$ and $p_2=2$, $\vsh\in\VhPrp{r}$ and $\qs\in\PS{r}$, we
  have that
  \begin{eqnarray}
    \asP(\vsh,\qs) &=& \int_{\P}\nabla \vsh\cdot\nabla \qs 
    ~\dx=-\int_\P \vsh \Delta q~\dx + 
    \int_{\partial\P} \vsh\partial_n q ~\dS.\nonumber
  \end{eqnarray}
  As $\Delta\qs\in\PS{r-2}$, the first term in the last equality on
  the right is computable in view of the choice of the degrees of
  freedom \TERM{D3} with $d_0^i=r-2p_1=r-2$.
  The computability of the second term follows from the fact that the
  trace of $\vsh$ on each edge of $\P$ is a polynomial that can be
  computed explicitly by interpolating the values in \TERM{D1} and
  \TERM{D2}.
  
\end{remark}

Now, we introduce the symmetric bilinear form
$\ash:\Vhrp{r}\times\Vhrp{r}\to\REAL$, which is written as the sum of
local terms
\begin{align}
  \as_{p_1,h}(\ush,\vsh) = \sum_{\P\in\Th}a^\P_{p_1,h}(\ush,\vsh),
\end{align}
where each local term $a^\P_{p_1,h}:\VhPrp{r}\times\VhPrp{r}\to\REAL$
is a symmetric bilinear form.
We set
\begin{align}
  \ashP(\ush,\vsh) 
  = \asP(\PiPr{r}\ush,\PiPr{r}\vsh) 
  + \SP(\ush-\PiPr{r}\ush,\vsh-\PiPr{r}\vsh),
  \label{eq:poly:ah:def}
\end{align}
where the stabilization form $\SP:\VhPrp{r}\times\VhPrp{r}\to\mathbbm{R}$ is a symmetric
positive definite bilinear form such that
\begin{align}
  \sigma_*\asP(\vsh,\vsh)\leq\SP(\vsh,\vsh)\leq\sigma^*\asP(\vsh,\vsh)
  \qquad\forall\vsh\in\VhPrp{r}\textrm{~with~}\PiPr{r}\vsh=0,
  \label{eq:poly:S:stability:property}
\end{align}
for two positive constants $\sigma_*$, $\sigma^*$ that are independent
of $\hh$ (and $\P$).
The bilinear form $\ashP(\cdot,\cdot)$ has the two fundamental
properties of $r$-\emph{consistency} and \emph{stability},
cf.~\eqref{eq:poly:r-consistency} and
\eqref{eq:poly:stability}~\cite{Antonietti-Manzini-Verani:2019}.

\subsection{Discretization of the load term}
Let $\fsh$ be the piecewise polynomial approximation of $\fs$ on $\Th$
given by
\begin{equation}
  \label{eq:rhs}
  \restrict{\fsh}{\P} = \Pizr{r-p_1}\fs,
\end{equation}
for $r\geq p_2$ and $\P\in\Th$.
Then, we set
\begin{equation}\label{vem:rhs}
  \bil{\fsh}{\vsh} = \sum_{\P\in\Th}\int_{\P}\fsh\vsh\,{dx dy}.
\end{equation}
Using the definition of the $\LTWO$-orthogonal projection we find that
\begin{equation}\label{aux:1.1}
  \bil{\fsh}{\vsh} 
  = \sum_{\P\in\Tau_h}\int_{\P}\Pizr{r-p_1}\fs\,\vsh\,{dx dy}
  = \sum_{\P\in\Tau_h}\int_{\P}\Pizr{r-p_1}\,\fs\Pizr{r-p_1}\vsh\,{dx dy}
  = \sum_{\P\in\Tau_h}\int_{\P}f\,\Pizr{r-p_1}\vsh\,{dx dy}. 
\end{equation}
The right-hand side of \eqref{aux:1.1} is computable by using the
degrees of freedom \TERM{D1}-\TERM{D3} and the enhanced
approach~\cite{Ahmad-Alsaedi-Brezzi-Marini-Russo:2013}.

\subsection {Error analysis}
In this section, we recall some convergence results for the
approximation of \eqref{eq:poly:pblm:continuous}.
In particular, employing Theorem
\ref{theorem:poly:abstract:energy:norm} together with standard
approximation results and assuming the use of the enhanced
spaces~\cite{Ahmad-Alsaedi-Brezzi-Marini-Russo:2013} to provide
optimal approximation properties of the right hand side, the following
convergence result in the energy norm
holds~\cite{Antonietti-Manzini-Verani:2019}

\begin{theorem}
  \label{theorem:poly:energy:convg:rate}
  Let $\fs\in\HS{r-p_1+1}(\Omega)$ be the forcing term at the
  right-hand side, $\us$ the solution of the variational
  problem~\eqref{eq:poly:pblm:wp} and $\ush\in\Vhrp{r}$ the solution
  of the virtual element method~\eqref{eq:poly:VEM}.
  Then, it holds that
  \begin{align}
    \normV{\us-\ush}
    \leq\Cs\hh^{r-(p_1-1)}\big( \snorm{\us}{r+1} + \snorm{\fs}{r-p_1+1} \big).
    \label{eq:poly:energy:convg:rate}
  \end{align}
\end{theorem}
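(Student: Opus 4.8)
The plan is to feed the three abstract error contributions of Theorem~\ref{theorem:poly:abstract:energy:norm} into the specific choices made for the discrete problem and estimate each of them by standard approximation theory. I would take $\usI\in\Vhrp{r}$ to be the interpolant of $\us$ determined by the degrees of freedom \TERM{D1}--\TERM{D3} and $\usp\in\PS{r}(\Th)$ the elementwise $\LTWO$-projection of $\us$. Since $\fs\in\HS{r-p_1+1}(\Omega)$, the elliptic regularity estimate recalled in Section~\ref{sec2:continuous_pbl} gives $\us\in\HS{r+1}(\Omega)$, so every seminorm below is finite.

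For the interpolation term I would invoke the standard virtual element interpolation estimate: using the inclusion $\PS{r}(\P)\subset\VhPrp{r}$, the shape-regularity of $\Th$, and scaled trace and inverse inequalities on each $\P$, one obtains the local bound $\snorm{\us-\usI}{p_1,\P}\leq\Cs\,\hP^{\,r+1-p_1}\snorm{\us}{r+1,\P}$, and summing over $\P$ yields $\normV{\us-\usI}\leq\Cs\,\hh^{\,r-(p_1-1)}\snorm{\us}{r+1}$. The piecewise-polynomial term is purely local and even simpler: on each element the form $\asP(\cdot,\cdot)$ defining $\norm{\cdot}{h}$ is controlled by the full $\HS{p_1}(\P)$-seminorm, so Bramble--Hilbert polynomial approximation gives $\norm{\us-\usp}{h}\leq\Cs\,\hh^{\,r-(p_1-1)}\snorm{\us}{r+1}$, the same power of $\hh$.

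The right-hand side term is the one that needs the \DOFS{}-specific structure. Recalling $\restrict{\fsh}{\P}=\Pizr{r-p_1}\fs$ from \eqref{eq:rhs} and using the $\LTWO$-orthogonality of $\Pizr{r-p_1}$ twice (exactly as in \eqref{aux:1.1}), I would rewrite, for every $\vsh\in\Vhrp{r}$,
\begin{align*}
  \bil{\fs-\fsh}{\vsh}
  = \sumP\int_{\P}\big(\fs-\Pizr{r-p_1}\fs\big)\big(\vsh-\Pizr{r-p_1}\vsh\big)\dx,
\end{align*}
so the polynomial part of $\fs$ cancels. A Cauchy--Schwarz inequality together with the approximation bounds $\norm{\fs-\Pizr{r-p_1}\fs}{0,\P}\leq\Cs\,\hP^{\,r-p_1+1}\snorm{\fs}{r-p_1+1,\P}$ and $\norm{\vsh-\Pizr{r-p_1}\vsh}{0,\P}\leq\Cs\,\hP^{\,p_1}\snorm{\vsh}{p_1,\P}$ (the latter being the point where the enhancement, i.e.\ a projection degree large enough, is used; in the remaining cases a Poincar\'e inequality on $\HS{p_1}_0(\Omega)$ still leaves a spare power of $\hh$) then shows $\norm{\fs-\fsh}{\Vhrs{r}}\leq\Cs\,\hh^{\,r+1}\snorm{\fs}{r-p_1+1}$, a strictly higher-order contribution.

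Collecting the three estimates in the abstract bound and discarding the higher-order right-hand side term produces the asserted rate $\hh^{\,r-(p_1-1)}$. I expect the genuine obstacle to be the first estimate: making precise a well-defined interpolation operator compatible with the \DOFS{}-tuple $M_{p_2-1}(p_1)$---whose data are vertex derivatives up to order $p_2-1$ and higher normal moments on the edges---and proving its approximation property with $\hh$-uniform constants requires careful scaled trace and inverse inequalities under the mesh-regularity hypotheses, which is considerably more delicate than in the classical $\CS{0}$ setting.
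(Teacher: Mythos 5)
Your proposal is correct and follows essentially the same route the paper indicates for this theorem: it feeds interpolation, piecewise-polynomial, and load-term estimates into the abstract bound of Theorem~\ref{theorem:poly:abstract:energy:norm}, exactly as the paper prescribes (the paper itself only sketches this step, deferring details to~\cite{Antonietti-Manzini-Verani:2019}). Your handling of the right-hand side via the double $\LTWO$-orthogonality trick is the same mechanism as~\eqref{aux:1.1}, and your observation that this term is of strictly higher order (with the Poincar\'e fallback when $r-p_1$ is small) is the correct accounting.
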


Moreover, the following convergence results in lower order norms can
be established~\cite{Antonietti-Manzini-Verani:2019}.
\begin{theorem}[Even $p_1$, even norms]
  Let $\fs\in\HS{r-p_1+1}(\Omega)$, $\us$ the solution of the
  variational problem~\eqref{eq:poly:pblm:wp} with $p_1=2\ell$ and
  $\vsh\in\Vhrp{r}$ the solution of the virtual element
  method~\eqref{eq:poly:VEM}.
  Then, there exists a positive constant $\Cs$ independent of $\hh$
  such that
  \begin{align}
    \snorm{\us-\ush}{2i}
    \leq\Cs\hh^{r+1-2i}\Big(\snorm{\us}{r+1}+\snorm{\fs}{r-(p_1-1)}\Big),
  \end{align}
  for every integer $i=0,\ldots,\ell-1$.
\end{theorem}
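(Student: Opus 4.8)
The plan is to run an Aubin--Nitsche duality argument, exploiting the fact that the method is \emph{conforming}, $\Vhrp{r}\subset\HS{p_2}_0(\Omega)\subset\Vs$, so that the only variational crimes are the discrete form $\as_{p_1,h}$ and the approximate load $\fh$. Writing $e=\us-\ush$, I would capture the even seminorm through the linear functional $g(\vs)=\sum_{|\nu|=2i}\int_\Omega\Ds^{\nu}e\,\Ds^{\nu}\vs\,\dx$, which satisfies $g(e)=\snorm{e}{2i}^2$, and then solve the dual problem: find $\psi\in\Vs$ such that $\as_{p_1}(\vs,\psi)=g(\vs)$ for all $\vs\in\Vs$. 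Since $|g(\vs)|\le\snorm{e}{2i}\snorm{\vs}{2i}$, the functional $g$ belongs to $\HS{-2i}(\Omega)$ with $\|g\|_{-2i}\le C\snorm{e}{2i}$, and because $2i\le 2(\ell-1)=p_1-2<p_1$, the convex-polygon elliptic regularity recalled in Section~\ref{sec2:continuous_pbl} (with $m=2i\le p_1$) yields $\psi\in\HS{2p_1-2i}(\Omega)\cap\HS{p_1}_0(\Omega)$ together with $\snorm{\psi}{2p_1-2i}\le C\snorm{e}{2i}$.

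Using the symmetry of $\as_{p_1}$ and the dual problem I would then write $\snorm{e}{2i}^2=g(e)=\as_{p_1}(e,\psi)$ and split against a virtual element interpolant $\psi_I\in\Vhrp{r}$ of $\psi$, namely $\as_{p_1}(e,\psi)=\as_{p_1}(e,\psi-\psi_I)+\as_{p_1}(e,\psi_I)$. The first term is controlled by continuity of $\as_{p_1}$, the interpolation estimate $\snorm{\psi-\psi_I}{p_1}\le C\hh^{p_1-2i}\snorm{\psi}{2p_1-2i}$, and the energy bound \eqref{eq:poly:energy:convg:rate}; it is bounded by $C\,\normV{e}\,\hh^{p_1-2i}\snorm{\psi}{2p_1-2i}\le C\,\hh^{r-(p_1-1)}\hh^{p_1-2i}\snorm{e}{2i}\big(\snorm{\us}{r+1}+\snorm{\fs}{r-(p_1-1)}\big)$, which already exhibits the target factor $\hh^{r+1-2i}$.

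The second term is the genuinely discrete part. Using the continuous equation $\as_{p_1}(\us,\psi_I)=\bil{\fs}{\psi_I}$ and the discrete equation \eqref{eq:poly:VEM} in the form $\as_{p_1,h}(\ush,\psi_I)=\bil{\fh}{\psi_I}$, I would rewrite $\as_{p_1}(e,\psi_I)=\bil{\fs-\fh}{\psi_I}+\big(\as_{p_1,h}(\ush,\psi_I)-\as_{p_1}(\ush,\psi_I)\big)$. The first contribution is the load consistency error: by \eqref{aux:1.1} and $\LTWO$-orthogonality of $\Pizr{r-p_1}$ it equals $\sumP\int_\P(\fs-\Pizr{r-p_1}\fs)(\psi_I-\Pizr{r-p_1}\psi_I)\dx$, which I would bound elementwise by the product of two best-approximation factors. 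The second contribution is the bilinear-form consistency error, which I would estimate on each $\P$ by inserting polynomial projections of $\ush$ and $\psi_I$ and invoking $r$-consistency \eqref{eq:poly:r-consistency} and stability \eqref{eq:poly:stability}, thereby reducing it to best polynomial approximation of $\us$ and $\psi$ and the interpolation error for $\psi$. Each of these carries the same $\hh^{r+1-2i}$ scaling.

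Collecting the three estimates gives $\snorm{e}{2i}^2\le C\,\hh^{r+1-2i}\snorm{e}{2i}\big(\snorm{\us}{r+1}+\snorm{\fs}{r-(p_1-1)}\big)$, and dividing by $\snorm{e}{2i}$ yields the claim. The hard part will be the second term: obtaining the two variational-crime contributions with the \emph{sharp} rate $\hh^{r+1-2i}$ requires combining the local consistency and stability properties with polynomial- and interpolation-approximation estimates while tracking Sobolev indices carefully. It is precisely in the regularity step that the restriction to even $p_1=2\ell$ and even target norms $2i$ is used, since then the functional $g$ pairs cleanly with the operator $(-\Delta)^{p_1}$ and the dual solution lands in $\HS{2p_1-2i}$ with the gain $\hh^{p_1-2i}$ needed to upgrade the energy estimate.
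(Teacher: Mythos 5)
Your proposal is correct and coincides with the proof the paper relies on: the paper does not prove this theorem in-house but recalls it from Reference~\cite{Antonietti-Manzini-Verani:2019}, and the argument there is exactly the Aubin--Nitsche duality scheme you describe --- dual polyharmonic problem with the convex-domain regularity shift $\psi\in\HS{2p_1-2i}(\Omega)$ (valid since $2i\leq p_1-2$), splitting against a virtual element interpolant $\psi_I$, the energy estimate of Theorem~\ref{theorem:poly:energy:convg:rate}, and separate treatment of the form-consistency and load-consistency crimes via $r$-consistency~\eqref{eq:poly:r-consistency}, stability~\eqref{eq:poly:stability}, and the $\LTWO$-orthogonality in~\eqref{aux:1.1}. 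The only point worth noting is that your dual-interpolation step implicitly needs $2p_1-2i\leq r+1$, i.e., the rate $\hh^{p_1-2i}$ is available only when $r$ is large enough (as in the setting of the cited reference); this is consistent with the saturation phenomenon the paper itself observes for low-order spaces without enhancement.
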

\begin{theorem}[Even $p_1$, odd norms]
  Let $\fs\in\HS{r-p_1+1}(\Omega)$, and $\us$ the solution of the
  variational problem~\eqref{eq:poly:pblm:wp} with $p_1=2\ell$ and
  $\ush\in\Vhrp{r}$ the solution of the virtual element
  method~\eqref{eq:poly:VEM}.
  Then, there exists a positive constant $\Cs$ independent of $\hh$
  such that
  \begin{align}
    \snorm{\us-\ush}{2i+1}
    \leq\Cs\hh^{(r+1)-(2i+1)}\Big(\snorm{\us}{r+1}+\snorm{\fs}{r-(p_1-1)}\Big),
  \end{align}
  for every integer $i=0,\ldots,\ell-1$.
\end{theorem}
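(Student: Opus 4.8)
The plan is to run an Aubin--Nitsche duality argument, using the energy estimate of Theorem~\ref{theorem:poly:energy:convg:rate} as the starting point and harvesting the extra powers of $\hh$ from the regularity of a suitable dual solution. Throughout I write $\es=\us-\ush$ for the error and exploit the \emph{global conformity} $\Vhrp{r}\subset H^{p_1}_{0}(\Omega)$, which gives $\es\in H^{p_1}_{0}(\Omega)$; this is what will make all boundary contributions in the integration-by-parts identities below vanish. Since $p_1=2\ell$ is even and $2i+1\leq 2\ell-1<p_1$ for $i=0,\ldots,\ell-1$, each target seminorm sits strictly below the energy order, so duality can genuinely improve the rate.

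First I would fix $i\in\{0,\ldots,\ell-1\}$ and design the dual problem so as to isolate $\snorm{\es}{2i+1}$. I take the datum $\gs=(-\Delta)^{2i+1}\es\in\HS{-(2i+1)}(\Omega)$, which, after integration by parts (all boundary terms vanishing because $\es\in H^{p_1}_{0}(\Omega)$), satisfies $\bil{\gs}{\es}=\snorm{\es}{2i+1}^2$ together with the dual bound $\norm{\gs}{-(2i+1)}\leq\Cs\snorm{\es}{2i+1}$. I then let $\varphi\in H^{p_1}_{0}(\Omega)$ solve the adjoint problem $\as_{p_1}(\varphi,\vs)=\bil{\gs}{\vs}$ for all $\vs\in\Vs$; since $(-\Delta)^{p_1}$ is self-adjoint this has exactly the form of the primal problem. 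Because $2i+1\leq p_1$, the elliptic regularity estimate recalled in Section~\ref{sec2:continuous_pbl} (applied with $m=2i+1$) yields $\varphi\in\HS{2p_1-(2i+1)}(\Omega)$ and $\snorm{\varphi}{2p_1-(2i+1)}\leq\Cs\norm{\gs}{-(2i+1)}\leq\Cs\snorm{\es}{2i+1}$.

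The core step is to bound $\snorm{\es}{2i+1}^2=\as_{p_1}(\varphi,\es)$ by repeating, in the dual variable, the manipulations of the proof of Theorem~\ref{theorem:poly:abstract:energy:norm}. Inserting a virtual element interpolant $\varphi_I\in\Vhrp{r}$ and a piecewise polynomial $\varphi_\pi\in\PS{r}(\Th)$, writing $\as_{p_1}(\varphi,\es)=\as_{p_1}(\varphi-\varphi_I,\es)+\as_{p_1}(\varphi_I,\es)$, and then using the discrete equation~\eqref{eq:poly:VEM}, the $r$-consistency~\eqref{eq:poly:r-consistency} and the stability~\eqref{eq:poly:stability}, I would reduce the right-hand side to three contributions: the leading term, bounded by $\Ms\,\normV{\varphi-\varphi_I}\,\normV{\es}$; a consistency error of the form $\sum_{\P\in\Th}\big(\as^{\P}_{p_1,h}-\as^{\P}_{p_1}\big)(\varphi_I,\ush)$, which vanishes on polynomials and is therefore controlled by the product of the approximation errors $\norm{\varphi-\varphi_\pi}{\hh}$ and $\norm{\us-\usp}{\hh}$; and the load error $\bil{\fs-\fsh}{\varphi_I}$. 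For the leading term I would use the interpolation bound $\normV{\varphi-\varphi_I}=\snorm{\varphi-\varphi_I}{p_1}\leq\Cs\hh^{\,2p_1-(2i+1)-p_1}\snorm{\varphi}{2p_1-(2i+1)}=\Cs\hh^{\,2\ell-2i-1}\snorm{\es}{2i+1}$, together with the energy bound $\normV{\es}\leq\Cs\hh^{\,r-(p_1-1)}\big(\snorm{\us}{r+1}+\snorm{\fs}{r-p_1+1}\big)$ from Theorem~\ref{theorem:poly:energy:convg:rate}. Multiplying the two powers gives exactly $\hh^{\,(r-2\ell+1)+(2\ell-2i-1)}=\hh^{\,r-2i}=\hh^{\,(r+1)-(2i+1)}$; the consistency and load contributions, handled by the same approximation estimates applied to $\varphi$ and by the optimality of the enhanced load approximation~\eqref{eq:rhs}, are of the same (or higher) order. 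Dividing through by $\snorm{\es}{2i+1}$ then delivers the claim.

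The main obstacle, as in every virtual element duality argument, is the \emph{nonconformity}: one cannot simply invoke Galerkin orthogonality for $\as_{p_1}$, because the computable form $\as_{p_1,h}$ differs from $\as_{p_1}$ off the polynomial part, so the consistency error $\sum_{\P}(\as^{\P}_{p_1,h}-\as^{\P}_{p_1})(\varphi_I,\ush)$ must be tracked explicitly and shown not to spoil the rate. A secondary technical point is that the duality gain relies on genuine elliptic regularity of the dual solution, i.e. on $\varphi\in\HS{2p_1-(2i+1)}(\Omega)$ being available on the convex polygon $\Omega$ and on $2p_1-(2i+1)\leq r+1$ so that the full regularity is usable in the interpolation estimate; for the biharmonic model case $p_1=2$ of Section~\ref{sec5:numerics} (where $\ell=1$, $i=0$, and $2p_1-1=3\leq r+1$ for every admissible $r\geq p_2\geq p_1$) this holds automatically.
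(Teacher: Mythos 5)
Your proposal is correct and coincides with the proof of record: the paper itself does not prove this theorem but recalls it from Reference~\cite{Antonietti-Manzini-Verani:2019}, and the argument used there is exactly your Aubin--Nitsche duality scheme (dual polyharmonic problem with datum built from the error, elliptic regularity on the convex polygon as recalled in Section~\ref{sec2:continuous_pbl}, the energy bound of Theorem~\ref{theorem:poly:energy:convg:rate}, plus explicit tracking of the VEM consistency and load errors). The two caveats you flag are also the right ones: the lack of Galerkin orthogonality forces the consistency term $\sum_{\P}\big(a^{\P}_{p_1,h}-a^{\P}_{p_1}\big)(\varphi_I,\ush)$ to be handled via $r$-consistency after inserting $\varphi_{\pi}$ and $\us_{\pi}$, and the full rate $\hh^{(r+1)-(2i+1)}$ requires $r\geq 2p_1-2i-2$, which is guaranteed in the regime $r\geq 2p_2-2\geq 2p_1-2$ of the cited reference.
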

\begin{theorem}[Odd $p_1$, even norms]
  Let $\fs\in\HS{r-p_1+1}(\Omega)$ and $\us$ be the solution of the
  variational problem~\eqref{eq:poly:pblm:wp} and $\ush\in\Vhrp{r}$
  the solution of the virtual element method~\eqref{eq:poly:VEM}.
  Then, there exists a positive constant $\Cs$ independent of $\hh$
  such that
  \begin{align}
    \snorm{\us-\ush}{2i}
    \leq\Cs\hh^{ (r+1)-2i }\Big(\snorm{\us}{r+1} + \snorm{\fs}{r-(p_1-1)}\Big),
  \end{align}
    for every integer $i=0,\ldots,\ell-1$.
\end{theorem}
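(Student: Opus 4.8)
The plan is to run an Aubin--Nitsche duality argument that upgrades the energy estimate of Theorem~\ref{theorem:poly:energy:convg:rate} by the extra factor $\hh^{p_1-2i}$. Set $e=\us-\ush$ and observe that, since $p_2\geq p_1$, both $\us$ and $\ush$ lie in $\HS{p_1}_0(\Omega)$, hence $e\in\HS{p_1}_0(\Omega)=\Vs$. Fix $i\in\{0,\ldots,\ell-1\}$; because $p_1=2\ell+1$ we have $2i\leq 2\ell-2<p_1$. I would introduce the dual solution $\psi\in\Vs$ defined by
\begin{equation}
  a_{p_1}(\psi,\vs)=\sum_{\ABS{\nu}=2i}(\Ds^{\nu}e,\Ds^{\nu}\vs)\qquad\forall\,\vs\in\Vs,
\end{equation}
which is well posed by Lax--Milgram since the right-hand side is a bounded functional on $\Vs$ of norm at most $\snorm{e}{2i}$. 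Testing with $\vs=e$ reproduces the target seminorm, $a_{p_1}(\psi,e)=\snorm{e}{2i}^2$. The functional $\vs\mapsto\sum_{\ABS{\nu}=2i}(\Ds^{\nu}e,\Ds^{\nu}\vs)$ belongs to $\HS{-2i}(\Omega)$ with norm bounded by $\snorm{e}{2i}$; since $2i\leq p_1$, the elliptic regularity estimate recalled in Section~\ref{sec2:continuous_pbl} (from~\cite{Gazzola-Grunau-Sweers:1991}) gives $\psi\in\HS{2p_1-2i}(\Omega)\cap\HS{p_1}_0(\Omega)$ together with the bound $\snorm{\psi}{2p_1-2i}\leq \Cs\,\snorm{e}{2i}$.

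Next I would split, using symmetry of $a_{p_1}(\cdot,\cdot)$, as
\begin{equation}
  \snorm{e}{2i}^2=a_{p_1}(e,\psi)=a_{p_1}(e,\psi-\psi_I)+a_{p_1}(e,\psi_I),
\end{equation}
where $\psi_I\in\Vhrp{r}$ is the virtual element interpolant of $\psi$ and $\psi_{\pi}\in\PS{r}(\Th)$ a piecewise polynomial approximant. The first term is controlled by continuity of $a_{p_1}(\cdot,\cdot)$ and the interpolation bound $\normV{\psi-\psi_I}\leq \Cs\,\hh^{p_1-2i}\snorm{\psi}{2p_1-2i}$ afforded by the $\HS{2p_1-2i}$-regularity of $\psi$; combined with the energy estimate $\normV{e}\leq \Cs\,\hh^{r-(p_1-1)}(\snorm{\us}{r+1}+\snorm{\fs}{r-(p_1-1)})$ of Theorem~\ref{theorem:poly:energy:convg:rate} and $\snorm{\psi}{2p_1-2i}\leq \Cs\snorm{e}{2i}$, it is bounded by $\Cs\,\hh^{(r+1)-2i}(\snorm{\us}{r+1}+\snorm{\fs}{r-(p_1-1)})\,\snorm{e}{2i}$.

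For the second term I would expose the two variational crimes. Using the continuous problem~\eqref{eq:poly:pblm:wp}, the discrete scheme~\eqref{eq:poly:VEM}, and the local decomposition of $a_{p_1,h}(\cdot,\cdot)$, one gets
\begin{equation}
  a_{p_1}(e,\psi_I)=\bil{\fs-\fsh}{\psi_I}+\sum_{\P\in\Th}\Big(a^{\P}_{p_1,h}(\ush,\psi_I)-a^{\P}_{p_1}(\ush,\psi_I)\Big).
\end{equation}
The consistency term is handled exactly as in the proof of Theorem~\ref{theorem:poly:abstract:energy:norm}: inserting $\us_{\pi}$ and $\psi_{\pi}$ and invoking $r$-consistency~\eqref{eq:poly:r-consistency} reduces each summand to $a^{\P}_{p_1,h}(\ush-\us_{\pi},\psi_I-\psi_{\pi})-a^{\P}_{p_1}(\ush-\us_{\pi},\psi_I-\psi_{\pi})$, which stability~\eqref{eq:poly:stability} bounds by $\Cs\,\snorm{\ush-\us_{\pi}}{p_1,\P}\,\snorm{\psi_I-\psi_{\pi}}{p_1,\P}$; a Cauchy--Schwarz summation over $\P$, followed by the triangle inequality $\snorm{\ush-\us_{\pi}}{h}\leq\normV{e}+\snorm{\us-\us_{\pi}}{h}$, again produces the product of the two rates $\hh^{r-(p_1-1)}$ and $\hh^{p_1-2i}$. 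The load term $\bil{\fs-\fsh}{\psi_I}$ is treated as in~\eqref{aux:1.1} through the $\LTWO$-projection structure of $\fsh$, yielding a contribution of the same order $\hh^{(r+1)-2i}\snorm{\fs}{r-(p_1-1)}\snorm{e}{2i}$. Collecting the three contributions and dividing by $\snorm{e}{2i}$ gives the claim.

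The main obstacle is the bookkeeping of the two variational crimes inside the duality: one must verify that the consistency error of $a_{p_1,h}(\cdot,\cdot)$ and the load error $\bil{\fs-\fsh}{\cdot}$ each factor into the product of the primal energy rate $\hh^{r-(p_1-1)}$ and the dual interpolation gain $\hh^{p_1-2i}$, rather than a single power. This hinges on spending the extra $\HS{2p_1-2i}$-regularity of $\psi$ entirely on the $\psi$-factor (through $\normV{\psi-\psi_I}$ and $\snorm{\psi_I-\psi_{\pi}}{h}$) while keeping the optimal energy rate on the $\us$-factor, and on the fact that the $r$-consistency and stability properties used for the energy estimate remain available unchanged.
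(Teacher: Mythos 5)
The paper does not actually prove this theorem: it is recalled, without proof, from Reference~\cite{Antonietti-Manzini-Verani:2019}, where it is established by precisely the Aubin--Nitsche duality argument you propose. Your write-up is correct in structure and in the key steps --- the dual polyharmonic problem with datum equivalent to a distribution in $\HS{-2i}(\Omega)$ of norm $\leq\snorm{\us-\ush}{2i}$, the regularity bound $\snorm{\psi}{2p_1-2i}\leq\Cs\snorm{\us-\ush}{2i}$ on the convex polygon (legitimate since $2i\leq 2\ell-2<p_1$), and the splitting of $a_{p_1}(\us-\ush,\psi)$ into a dual-interpolation term plus the two variational crimes, each factoring into the product $\hh^{r-(p_1-1)}\cdot\hh^{p_1-2i}$ --- so it matches the source's route rather than offering a genuinely different one.

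Two hypotheses you use silently should be made explicit. First, the dual approximation bounds $\normV{\psi-\psi_I}\leq\Cs\hh^{p_1-2i}\snorm{\psi}{2p_1-2i}$ and $\norm{\psi-\psi_{\pi}}{h}\leq\Cs\hh^{p_1-2i}\snorm{\psi}{2p_1-2i}$ hold only if the full dual regularity is attainable by the discrete space, i.e.\ $2p_1-2i\leq r+1$; in particular $r\geq 2p_1-1$ is needed for the case $i=0$. This is satisfied by the spaces \eqref{eq:vem-space-higher} with $p_2=p_1$ and $r\geq 2p_2-1$, but for the lowest-order spaces of Section~\ref{S:lower} the dual gain saturates at $\hh^{r+1-p_1}$ and the stated rate is not reached by this argument. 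Second, your treatment of the load term presumes that $\restrict{\fsh}{\P}=\Pizr{r-p_1}\fs$ is actually usable, which (as the paper notes in its error-analysis subsection) requires the enhanced-space construction of Reference~\cite{Ahmad-Alsaedi-Brezzi-Marini-Russo:2013}; without it the load contribution, not the duality itself, is what degrades the $\LTWO$ rate observed numerically in Section~\ref{sec5:numerics}. Neither caveat invalidates your proof in the regime where the theorem is meant to apply, but both belong in a careful statement of the argument.
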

\begin{theorem}[Odd $p_1$, odd norms]
  Let $\fs\in\HS{r-p_1+1}(\Omega)$ and $\us$ be the solution of the
  variational problem~\eqref{eq:poly:pblm:wp} and $\ush\in\Vhrp{r}$
  the solution of the virtual element method~\eqref{eq:poly:VEM}.
  Then, there exists a positive constant $\Cs$ independent of $\hh$
  such that
  \begin{align}
    \snorm{\us-\ush}{2i+1}
    \leq\Cs\hh^{(r+1)-(2i+1)}\Big(\snorm{\us}{r+1}+\snorm{\fs}{r-(p_1-1)}\Big),
  \end{align}
  for every integer $i=0,\ldots,\ell-1$.
\end{theorem}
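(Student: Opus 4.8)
The plan is to run an Aubin--Nitsche duality argument, adapting the dual datum to the odd seminorm $\snorm{\cdot}{2i+1}$ exactly as in the three companion estimates, and to feed in the energy bound of Theorem~\ref{theorem:poly:energy:convg:rate}. Fix $i\in\{0,\ldots,\ell-1\}$ and note that $\ss:=2i+1\leq 2\ell-1<p_1$, so we are measuring a genuinely lower-order derivative than the energy ($\HS{p_1}$) seminorm and should gain $p_1-\ss$ powers of $\hh$. Set $\es=\us-\ush$. First I would introduce the dual problem: find $\ws\in\Vs=\HS{p_1}_0(\Omega)$ with $\as_{p_1}(\ws,\vs)=(\es,\vs)_{\ss}$ for all $\vs\in\Vs$, where $(\cdot,\cdot)_{\ss}$ is the $\HS{\ss}$-seminorm inner product; equivalently $(-\Delta)^{p_1}\ws=(-\Delta)^{\ss}\es$ with the homogeneous conditions \eqref{eq:poly:pblm:2}. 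Since $\es\in\HS{p_1}_0(\Omega)$, repeated integration by parts shows that the dual datum defines a functional in $\HS{-\ss}(\Omega)$ with norm bounded by $\snorm{\es}{\ss}$, so the elliptic regularity recalled in Section~\ref{sec2:continuous_pbl} (applied with $m=\ss\leq p_1$) yields $\ws\in\HS{2p_1-\ss}(\Omega)\cap\HS{p_1}_0(\Omega)$ together with $\norm{\ws}{2p_1-\ss}\leq\Cs\,\snorm{\es}{\ss}$.

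Next I would test the dual problem with $\vs=\es$ to obtain the representation $\snorm{\es}{\ss}^2=\as_{p_1}(\ws,\es)=\as_{p_1}(\es,\ws)$, and split it using the virtual element interpolant $\wsI\in\Vhrp{r}$ and a piecewise polynomial $\wsp\in\PS{r}(\Th)$ of $\ws$:
\begin{align*}
  \snorm{\es}{\ss}^2 = \as_{p_1}(\es,\ws-\wsI) + \as_{p_1}(\es,\wsI).
\end{align*}
For the first term, continuity of $\as_{p_1}$ together with the interpolation estimate $\normV{\ws-\wsI}\leq\Cs\,\hh^{p_1-\ss}\norm{\ws}{2p_1-\ss}$ (valid because $\Vhrp{r}\supset\PS{r}(\P)$ reproduces the needed degrees and $\ws\in\HS{2p_1-\ss}$) gives a bound $\Cs\,\hh^{p_1-\ss}\normV{\es}\,\snorm{\es}{\ss}$; inserting $\normV{\es}\leq\Cs\,\hh^{r-(p_1-1)}(\snorm{\us}{r+1}+\snorm{\fs}{r-p_1+1})$ from Theorem~\ref{theorem:poly:energy:convg:rate} produces exactly the factor $\hh^{(r+1)-(2i+1)}$.

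The second term is where the variational crime enters and is the main obstacle. Using $\as_{p_1}(\us,\wsI)=\bil{\fs}{\wsI}$ from \eqref{eq:poly:pblm:wp} and $\as_{p_1,h}(\ush,\wsI)=\bil{\fsh}{\wsI}$ from \eqref{eq:poly:VEM}, I would write $\as_{p_1}(\es,\wsI)=\bil{\fs-\fsh}{\wsI}+\big(\as_{p_1,h}(\ush,\wsI)-\as_{p_1}(\ush,\wsI)\big)$. The consistency defect is treated element by element: after inserting $\usp$ and $\wsp$ and invoking the $r$-consistency \eqref{eq:poly:r-consistency} to cancel every contribution with a polynomial argument, what survives are terms of the form $a^{\P}_{p_1,h}(\ush-\usp,\wsI-\wsp)$ and $a^{\P}_{p_1}(\ush-\usp,\wsI-\wsp)$, which the stability \eqref{eq:poly:stability} reduces to products of the approximation errors $\norm{\us-\usp}{h}$, $\normV{\es}$, $\norm{\ws-\wsp}{h}$ and $\normV{\ws-\wsI}$. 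The load term $\bil{\fs-\fsh}{\wsI}$ is handled through the $\LTWO$-orthogonality of $\fsh=\Pizr{r-p_1}\fs$ as in \eqref{aux:1.1}, trading derivatives onto $\ws$. The delicate point throughout is the bookkeeping of Sobolev indices: one must verify that every residual factor carries at least the order $\hh^{(r+1)-(2i+1)}$, which hinges on the dual regularity $\HS{2p_1-\ss}$ being matched by the polynomial degree $r$ in the approximation of $\ws$. Collecting all contributions and dividing by $\snorm{\es}{\ss}$ then yields the claim; the argument is identical in spirit to the even-norm and odd-$p_1$ cases, only the dual datum $(-\Delta)^{\ss}\es$ and the parity of $\ss$ changing.
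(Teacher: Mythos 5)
Note first that this paper contains no proof of the statement: the four lower-order-norm theorems of Section~\ref{sec4:VEM} are recalled, proofs omitted, from Reference~\cite{Antonietti-Manzini-Verani:2019}, so your attempt can only be measured against the argument given there. Your Aubin--Nitsche scheme is exactly that argument: the dual polyharmonic problem with datum induced by the target seminorm, the convexity-based regularity $\ws\in\HS{2p_1-\ss}(\Omega)$ with $\norm{\ws}{2p_1-\ss}\leq\Cs\,\snorm{\es}{\ss}$, the splitting of $\as_{p_1}(\es,\ws)$ by the interpolant $\wsI$, the insertion of the energy bound of Theorem~\ref{theorem:poly:energy:convg:rate}, and the separate treatment of the consistency defect (inserting $\usp,\wsp$, cancelling with the $r$-consistency~\eqref{eq:poly:r-consistency}, bounding the remainders with the stability~\eqref{eq:poly:stability}) and of the load error (via the orthogonality of $\Pizr{r-p_1}$ as in~\eqref{aux:1.1}). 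Your leading-order bookkeeping $\hh^{p_1-\ss}\cdot\hh^{r-(p_1-1)}=\hh^{(r+1)-(2i+1)}$ is the correct one, and the choice of dual datum for the odd seminorm is the right adaptation.

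The one substantive point is the condition you flag but never verify, and it is not cosmetic: the dual interpolation estimate $\normV{\ws-\wsI}\leq\Cs\,\hh^{p_1-\ss}\norm{\ws}{2p_1-\ss}$ requires the polynomial degree to resolve the dual regularity, namely $r+1\geq 2p_1-\ss$, i.e. $r\geq 2p_1-2i-2$, with the worst case $r\geq 2p_1-2$ at $i=0$. This holds under the standing hypothesis $r\geq 2p_1-2$ of Reference~\cite{Antonietti-Manzini-Verani:2019} (and a fortiori for the spaces of Section~\ref{sec4:VEM} with $r\geq 2p_2-1\geq 2p_1-1$), so within the scope in which the theorem is asserted your proof closes once you state this inequality and use it; the same check is also what legitimizes the load-term bound, since $\|\ws-\Pizr{r-p_1}\ws\|_{0,\P}$ must likewise deliver a factor $\hh^{p_1-\ss}$. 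However, for the lower-order spaces newly introduced in Section~\ref{S:lower} (with $p_2=p_1$ and $p_1\leq r<2p_1-2$, possible for $p_1\geq3$) the condition fails for small $i$, and the duality argument then caps the gain at $\hh^{\,r-(p_1-1)+\min(p_1-\ss,\,r+1-p_1)}$, short of the claimed rate. So make the restriction on $r$ explicit in the hypotheses of your proof; as written, the argument silently assumes it at its most delicate step.
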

\newcommand{\stabI}{$\matU=\matI$}
\newcommand{\stabD}{$\matU=\matD^{\perp}$}
\newcommand{\alphaOne}  {\multicolumn{2}{c|}{$\alpha_{\mathsf{stab}}=\text{Trace}(\matM_{\P})/3$}}
\newcommand{\alphaTwo}  {\multicolumn{2}{c}{$\alpha_{\mathsf{stab}}=1\slash{\mP}$             }}
\newcommand{\alphaThree}{\multicolumn{2}{c}{$\alpha_{\mathsf{stab}}=1\slash{\hh^2}$           }}

\section{Numerics}
\label{sec5:numerics}
We investigate the behavior of the two-dimensional, highly-regular,
conforming virtual element approximations that we introduced in the
previous sections when applied to the numerical resolution of the
Poisson ($p_1=1$) and biharmonic ($p_1=2$) equations.

According to the notation introduced in
Section~\ref{sec3:discrete_pbl}, we recall that the finite dimensional
virtual element space $\Vhrp{r} \subset H^{p_2}_{0}(\Omega)$ is made
of $C^{k}$-regular functions on $\Omega$ where $k=p_2-1$.
Moreover, the local Virtual Element space $\VhPrp{r}$, i.e. the
restriction of $\Vhrp{r}$ to any element $\P\in\Th$, is a finite
dimensional subspace of $H^{p_2}(\P)$ containing the space of
polynomials of degree up to {$r$} defined on $\P$.

\medskip
Throughout the section, the computational domain is the unit square,
the loading term $\fs$ is set up in accordance with the exact solution
\begin{align*}
  \us(\xs,\ys)=(1-\xs)^2\xs^2\,(1-\ys)^2\ys^2,
\end{align*}
and the boundary conditions are chosen accordingly.

\medskip
We consider four different mesh families: quadrilateral meshes
\textsf{QUAD}, triangular meshes \textsf{TRI}, central Voronoi
tessellations \textsf{CVT} and hexagonal meshes \textsf{HEX}.
An example of a mesh of each family is shown in Fig.~\ref{fig_mesh};
the corresponding number of elements of the refined meshes is shown in
Table~\ref{tab:meshes:elements}.
\begin{figure}[!t]
  \begin{center}
    \def\arraystretch{1}\tabcolsep=5pt
    \begin{tabular}{cc}
      \subfigure[Quadrilateral (\textsf{QUAD}) mesh]{\includegraphics[scale=0.125]{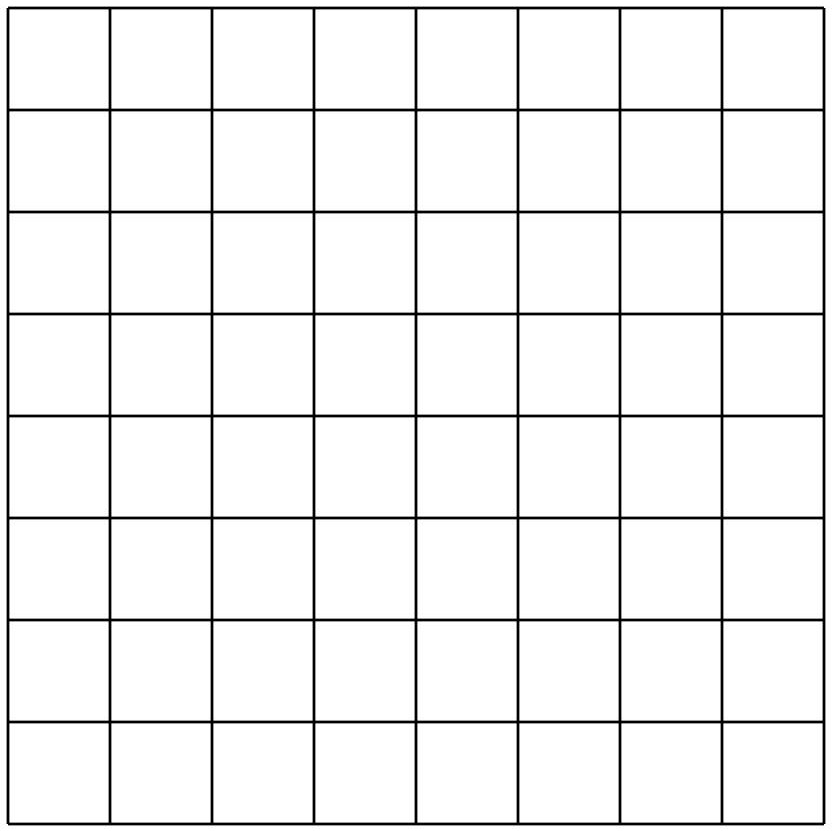}} &\qquad  
      \subfigure[Triangular    (\textsf{TRI})  mesh]{\includegraphics[scale=0.125]{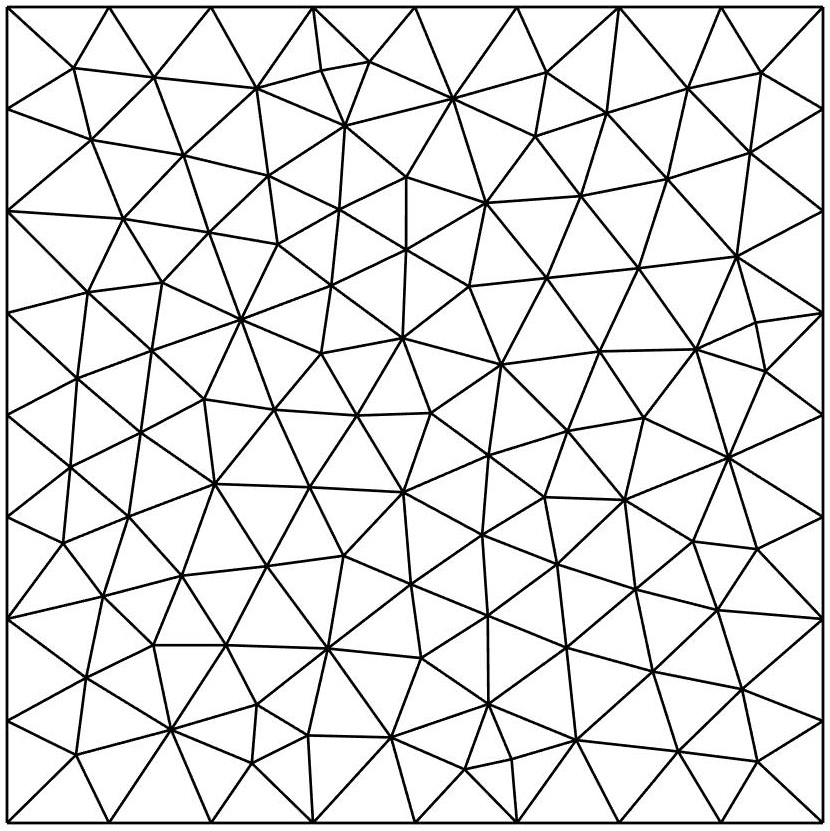}} \\       
      \subfigure[CVT           (\textsf{CVT})  mesh]{\includegraphics[scale=0.125]{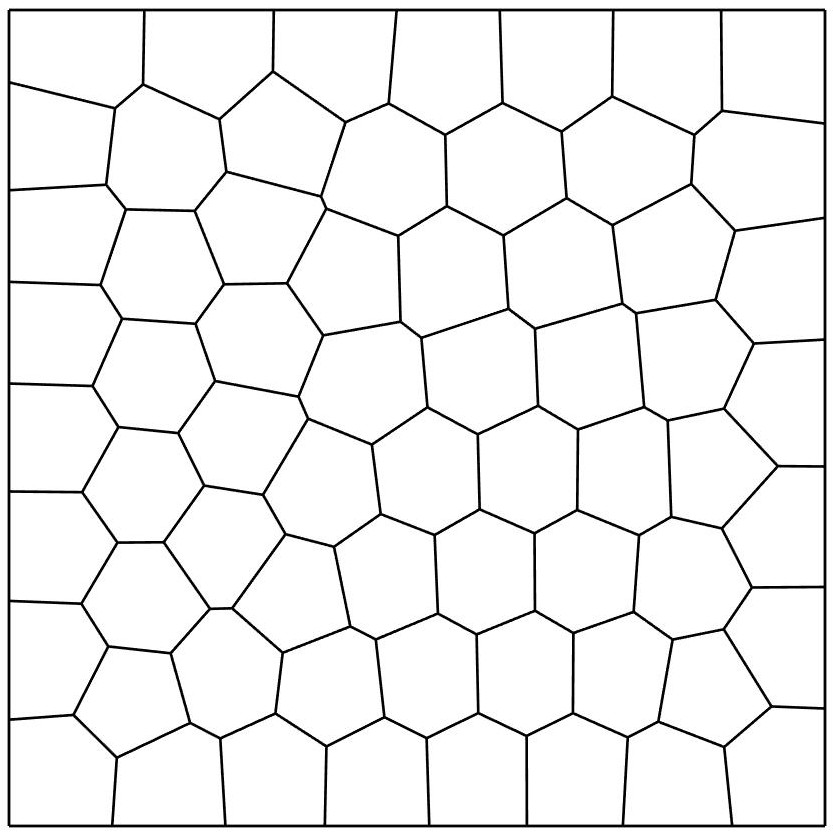}} &\qquad  
      \subfigure[Exagonal      (\textsf{HEX})  mesh]{\includegraphics[scale=0.125]{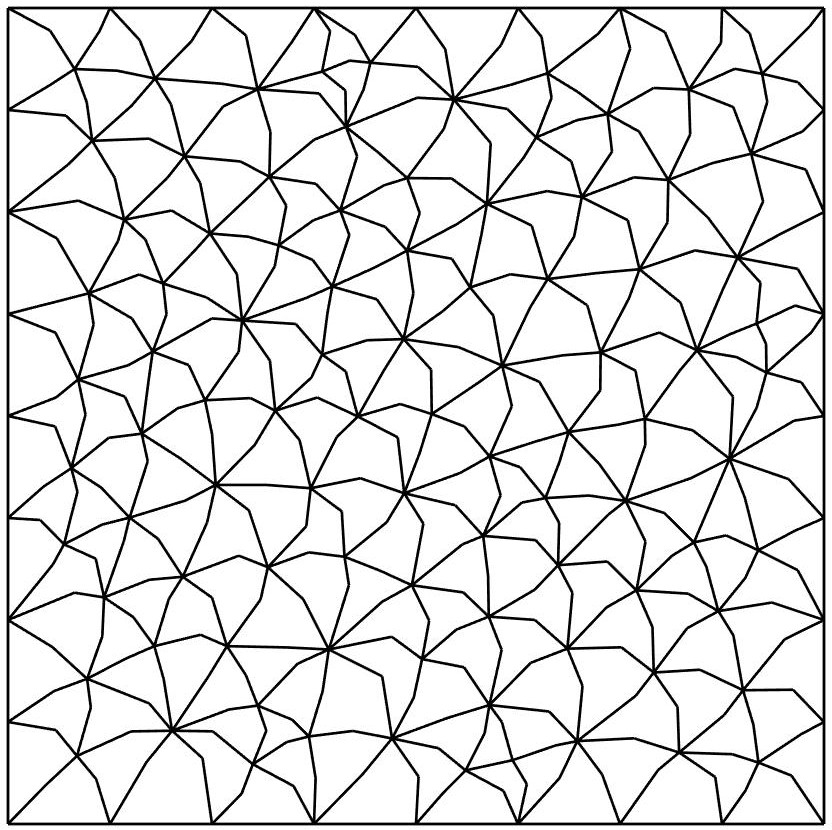}} \\       
    \end{tabular}
  \end{center}
  \caption{Examples of polygonal meshes used in the numerical tests of
    Section~\ref{sec5:numerics}: a quadrilateral (\textsf{QUAD}),
    triangular (\textsf{TRI}), central Voronoi \textsf{CVT}, and
    hexagonal (\textsf{HEX}) mesh.}\label{fig_mesh}
\end{figure}
\renewcommand{\TABROW}[6]{ #1 & #2 & #3 & #4 & #5 & #6 }
\begin{table}
  \begin{center}
    \def\arraystretch{1}\tabcolsep=5pt
    \begin{tabular}{l|rr|rr|rr|rr}
      \hline
      \TABROW{$1\slash{\hh}$      }{    8 }{  16 }{   32 }{    64 }{   128 }\\
      \hline
      \TABROW{\textsf{QUAD}}{   64 }{ 256 }{ 1024 }{  4096 }{ 16384 }\\
      \TABROW{\textsf{TRI} }{  212 }{ 870 }{ 3486 }{ 14080 }{ 56932 }\\
      \TABROW{\textsf{CVT} }{   64 }{ 256 }{ 1024 }{  4096 }{ 16384 }\\
      \TABROW{\textsf{HEX}}{  212 }{ 870 }{ 3486 }{ 14080 }{ 56932 }\\
      \hline
    \end{tabular}
  \end{center}
  \caption{Number of elements of the sequences of meshes versus the
    inverse of the mesh sixe $h$.}
  \label{tab:meshes:elements}
\end{table}

To illustrate the two stabilization strategies that we are going to
test in practice, we rewrite equation~\eqref{eq:poly:ah:def} in matrix
form, i.e.,
\begin{align}
  \matA_{\P} = \matM_{\P} +\matS_{\P},
  \label{matrx_form}
\end{align}
where
$\matA_{\P}$ is the elemental stiffness matrix,
$\matM_{\P}$ is the consistency matrix associated with
$\asP(\PiPr{r}\ush,\PiPr{r}\vsh)$ and
$\matS_{\P}$ is the stabilization matrix associated with
$\SP(\ush-\PiPr{r}\ush,\vsh-\PiPr{r}\vsh)$.
The matrix $\matS_{\P}$ has the following structure:
\begin{align}
  \matS_{\P} =
  \alpha_{\mathsf{stab}}\big( \matI-\matD\matQ )^T \matU ( \matI-\matQ ),
  \label{stab_matrx_form}
\end{align}
where
$\alpha_{\mathsf{stab}}$ is a scalar factor ensuring that matrices
$\matM_{\P}$ and $\matS_{\P}$ have the same scaling with respect to
$\hh$;
$\matI$ is the identity matrix;
$\matQ$ is the matrix representation of the polynomial projection
operator $\PiPr{r}$ with respect to the set of the canonical basis
functions of the virtual element space; $\matD$ is matrix collecting
the degrees of freedom of the polynomial basis chosen in the virtual
element space on its column;
and, finally, $\matU$ is a suitable matrix that allows us to change
the Virtual Element stabilization.
In particular, we consider the following two possible choices of
$\matU$ given in
\begin{itemize}
\item $\matU=\matI$, which is sometimes called in the virtual element
  jargon the ``\emph{dofi-dofi stabilization}'';
\item $\matU=\matD^{\perp}=\matI-\matD(\matD^T\matD)^{-1}\matD^T$.
\end{itemize}
In the second choice above, we use the symbol $\matD^{\perp}$ to
outline the fact that this matrix operator is the orthogonal projector
onto the complement of the vector space spanned by the columns of
$\matD$ (so, we can call it the ``\emph{D-perp stabilization}'').
Since $\matD$ is a maximum rank matrix by definition, the square
matrix $\matD^T\matD$ is a square non-singular matrix, and thus matrix
the $\matU$ is well-defined.
Other possible stabilization strategies for the VEM can be designed
according to Reference~\cite{Dassi-Mascotto:2018,Mascotto:2018}.

\medskip
In the solution of the Poisson equation (i.e.$p_1=1$, cf. Section
\ref{sec:numerics:Poisson}) we take
$\alpha_{\mathsf{stab}}=\text{Trace}(\matM_{\P})\slash{\Ndofs}$, where
$\Ndofs$ is the number of rows/columns of matrix $\matM_{\P}$,
i.e. the local number of degrees of freedom.
In the solution of the biharmonic equation (i.e. $p_1=2$, cf. Section
\ref{sec:numerics:Biharmonic}), the factor $\alpha$ must scale as
$\hh^{-2}$, and the choice is not unique.
To our purpose, we consider three different choices of this parameter,
which we will detail in subsection~\ref{sec:numerics:Biharmonic}.

In each test case, we compare the condition number of the stiffness
matrix and the accuracy of the resulting approximation by measuring
the error in the energy norm and in the $\LTWO$-norm (Poisson
equation) and in the energy norm and in the $\LINF$-norm (biharmonic
equation).
We point out that the experimental estimation of the
condition number of the global stiffness matrix $\matA$ has been
obtained by exploiting the analogies between the Lanczos technique and
the Conjugate Gradient method.
Indeed, within the Conjugate Gradient algorithm we can build a
suitable tridiagonal matrix whose extreme eigenvalues converge to the
extreme eigenvalues of $\matA$, see
Reference~\cite{Golub-VanLoan:1996}, Sections.~9.3 and 10.2 for more
details.
The approximation error is evaluating by computing
$\es_h=\us-\Pi^{p_1}_{r}\ush$, and its energy norm is provided by
$\big(a_{p_1,h}(\es_h,\es_h))^{1\slash{2}}$.
Finally, it is informative to say that we carried out all the tests of
this section by using our in-house $\Cs^{++}$ and
MATLAB~\cite{MATLAB:2020} implementations.

\subsection{Poisson equation}
\label{sec:numerics:Poisson}

We recall that, for fixed $p_2 \geq p_1$, $k=p_2-1$ denotes the
$\CS{k}$-regularity of the global virtual element space, and that $r$
denotes the degree of the polynomials contained in each elemental
approximation space.
We carried out the calculations corresponding to the two following
test cases (TCs):
\begin{itemize}
\item \emph{TC1}: $(k=0,r=2)$, $(k=1,r=2)$;
\item \emph{TC2}: $(k=0,r=3)$, $(k=1,r=3)$, $(k=2,r=3)$.
\end{itemize}
For both test cases, we consider the four different mesh families
shown in Fig.\ref{fig_mesh} and the two possible choices of the
stabilizing bilinear form discussed above, i.e., by choosing
$\matU=\matI$ and $\matU=\matD^{\perp}$ in~\eqref{stab_matrx_form}.
In every calculation, we measure the error in the energy norm
($\HS{1}$-norm) and in the $\LTWO$-norm , and we evaluate the
condition number of the linear system of equations.
The plots of the error curves versus $\hh$ (loglog scale) are shown in
Figs.~\ref{fig:poisson:C012-P3:hmax} and ~\ref{fig:poisson:C01-P2:hmax}.
The computed condition numbers are reported in
Tables~\ref{tab:poisson:condnum:C01-P2}
and~\ref{tab:poisson:condnum:C012-P3}.
We observe that the two different stabilizations seems to provide
comparable results concerning the condition numbers, which is
exhibiting the expected growth $O(h^{-2})$, although the choice
$\matU=\matD^{\perp}$ seems to provide lower condition numbers for the
VEM with higher regularity.

The behavior of the error curves is also very similar for all these
variants of the VEM, the error curves being very closed in almost
every plot and overlapping to the point that they cannot easily be
distinguished.
Optimal convergence rates are seen in every plot.
We recall that the error in energy norm is expected to decrease
proportionally to $\hh^{m}$ for $\hh\to0$ for all values of teh
polynomial order (order of accuracy of the method) $m$ here
considered.
Instead, the error in the $\LS{2}$-norm is expected to reduce as
$\hh^{2}$ for $r=2$ and $\hh^{4}$ for $r=4$, as we do not adopted the
modified, e.g., ``enhanced'', version of the
VEM~\cite{Ahmad-Alsaedi-Brezzi-Marini-Russo:2013}, which makes a better
approximation to the solution possible for the low order case $r=2$.
This loss of an order of convergence is a well-known phenomenon and
has been discussed in a previous
article~\cite{BeiraodaVeiga-Manzini:2014}.



\newcommand{\CzeroPtwo}  { \multicolumn{2}{c|}{$\CS{0}-\PS{2}$} }
\newcommand{\ConePtwo}   { \multicolumn{2}{c} {$\CS{1}-\PS{2}$} }

\newcommand{\CzeroPthree}{ \multicolumn{2}{c|}{$\CS{0}-\PS{3}$} }
\newcommand{\ConePthree} { \multicolumn{2}{c|}{$\CS{1}-\PS{3}$} }
\newcommand{\CtwoPthree} { \multicolumn{2}{c} {$\CS{2}-\PS{3}$} }

  
\newcommand{\MGT}[1]{{\color{magenta}#1}} 

\newcommand{\qm}{\RED{\textbf{??}}}
\newcommand{\OK}{\BLUE{\textbf{OK!}}}
\newcommand{\CHECK}{\MGT{Check!}}


\renewcommand{\TABROW}[5]{ #1 & #2 & #3 & #4 & #5 }
\begin{table}
  \begin{center}
    \def\arraystretch{1}\tabcolsep=5pt
    \begin{tabular}{r|cc|cc}
      \hline
      {}              &          \CzeroPtwo &           \ConePtwo \\
      \TABROW{ $1/h$ }{  \stabI }{  \stabD }{  \stabI }{  \stabD }\\
      \hline
      {}              & \multicolumn{4}{c}{\text{\textsf{QUAD} meshes}}\\
      \hline
      \TABROW{     8 }{ 1.24e+3 }{ 4.55e+2 }{ 2.38e+3 }{ 3.04e+2 }\\
      \TABROW{    16 }{ 4.99e+3 }{ 1.83e+3 }{ 1.04e+4 }{ 1.29e+3 }\\
      \TABROW{    32 }{ 2.00e+4 }{ 7.35e+3 }{ 4.31e+4 }{ 5.28e+3 }\\
      \TABROW{    64 }{ 8.01e+4 }{ 2.94e+4 }{ 1.75e+5 }{ 2.13e+4 }\\
      \TABROW{   128 }{ 3.21e+5 }{ 1.18e+5 }{ 7.05e+5 }{ 8.59e+4 }\\
      \hline
      {}              & \multicolumn{4}{c}{\text{\textsf{TRI} meshes}}\\
      \hline
      \TABROW{     8 }{ 9.97e+3 }{ 1.37e+3 }{ 1.59e+4 }{ 1.09e+3 }\\
      \TABROW{    16 }{ 4.27e+4 }{ 5.97e+3 }{ 7.49e+4 }{ 5.14e+3 }\\
      \TABROW{    32 }{ 1.69e+5 }{ 2.37e+4 }{ 3.27e+5 }{ 1.99e+4 }\\
      \TABROW{    64 }{ 6.80e+5 }{ 9.45e+4 }{ 1.31e+6 }{ 8.26e+4 }\\
      \TABROW{   128 }{ 2.79e+6 }{ 3.89e+5 }{ 5.50e+6 }{ 3.34e+5 }\\
      \hline
      {}              & \multicolumn{4}{c}{\textsf{CVT} meshes}\\
      \hline
      \TABROW{     8 }{ 1.53e+3 }{ 6.37e+2 }{ 1.95e+3 }{ 3.69e+2 }\\
      \TABROW{    16 }{ 6.32e+3 }{ 2.62e+3 }{ 8.97e+3 }{ 1.51e+3 }\\
      \TABROW{    32 }{ 2.70e+4 }{ 1.04e+4 }{ 3.97e+4 }{ 6.22e+3 }\\
      \TABROW{    64 }{ 1.02e+5 }{ 4.15e+4 }{ 1.65e+5 }{ 2.47e+4 }\\
      \TABROW{   128 }{ 4.14e+5 }{ 1.64e+5 }{ 6.62e+5 }{ 9.82e+4 }\\
      \hline
      {}              & \multicolumn{4}{c}{\text{\textsf{HEX} meshes}}\\
      \hline
      \TABROW{     8 }{ 8.88e+3 }{ 2.23e+3 }{ 1.10e+4 }{ 1.26e+3 }\\
      \TABROW{    16 }{ 4.08e+4 }{ 9.72e+3 }{ 5.44e+4 }{ 3.70e+3 }\\
      \TABROW{    32 }{ 2.22e+5 }{ 3.88e+4 }{ 2.42e+5 }{ 2.21e+4 }\\
      \TABROW{    64 }{ 8.79e+5 }{ 1.64e+5 }{ 1.23e+6 }{ 9.02e+4 }\\
      \TABROW{   128 }{ 5.57e+6 }{ 6.65e+5 }{ 5.01e+6 }{ 3.73e+5 }\\
      \hline
    \end{tabular}
    \caption{Poisson equation. Comparison of the computed
      condition numbers obtained with the different stabilization
      strategies, different regularity $k=0,1$ and polynomial order $r=2$.}
    \label{tab:poisson:condnum:C01-P2}
  \end{center}
\end{table}

\renewcommand{\TABROW}[7]{ #1 & #2 & #3 & #4 & #5 & #6 & #7 }
\begin{table}
  \begin{center}
    \def\arraystretch{1}\tabcolsep=5pt
    \begin{tabular}{r|rr|rr|rr}
      \hline
      {}              &          \CzeroPthree &         \ConePthree &        \CtwoPthree \\
      \TABROW{ $1/h$ }{    \stabI }{  \stabD }{  \stabI }{  \stabD }{  \stabI }{ \stabD }\\
      \hline
      {}              & \multicolumn{6}{c}{\text{\textsf{QUAD} meshes}}\\
      \hline
      \TABROW{     8 }{  3.74e+4 }{ 1.32e+6 }{ 7.67e+6 }{ 6.70e+3 }{ 8.41e+6 }{ 3.25e+3 }\\
      \TABROW{    16 }{  1.49e+5 }{ 5.26e+6 }{ 2.90e+7 }{ 2.50e+4 }{ 5.52e+7 }{ 1.98e+4 }\\
      \TABROW{    32 }{  5.96e+5 }{ 2.10e+7 }{ 1.16e+8 }{ 9.92e+4 }{ 2.50e+8 }{ 8.93e+4 }\\
      \TABROW{    64 }{  2.38e+6 }{ 8.41e+7 }{ 4.66e+8 }{ 3.97e+5 }{ 1.05e+9 }{ 3.72e+5 }\\
      \TABROW{   128 }{  9.53e+6 }{ 3.36e+8 }{ 1.87e+9 }{ 1.59e+6 }{ 4.30e+9 }{ 1.52e+6 }\\
      \hline
      {}              & \multicolumn{6}{c}{\text{\textsf{TRI} meshes}}\\
      \hline
      \TABROW{     8 }{ 4.88e+5 }{ 1.26e+8  }{ 2.74e+9     }{ 1.72e+5 }{ 5.91e+9     }{ 1.05e+5 }\\
      \TABROW{    16 }{ 2.43e+6 }{ 6.87e+8  }{ 1.56e+10    }{ 8.01e+5 }{ 5.39e+10    }{ 6.53e+5 }\\
      \TABROW{    32 }{ 1.15e+7 }{ 3.73e+9  }{ 9.67e+10    }{ 3.75e+6 }{ 3.60e+11    }{ 3.74e+6 }\\
      \TABROW{    64 }{ 4.14e+7 }{ 1.25e+10 }{ 2.90e+11    }{ 1.35e+7 }{ \text{n.a.} }{ 1.27e+7 }\\ 
      \TABROW{   128 }{ 2.15e+8 }{ 7.39e+10 }{ \text{n.a.} }{ 6.82e+7 }{ \text{n.a.} }{ 6.82e+7 }\\
      \hline
      {}              & \multicolumn{6}{c}{\textsf{CVT} meshes}\\
      \hline
      \TABROW{     8 }{ 8.79e+4 }{ 4.98e+6  }{ 3.31e+7  }{ 2.14e+4 }{ 3.83e+7  }{ 1.34e+4 }\\
      \TABROW{    16 }{ 3.91e+5 }{ 2.22e+7  }{ 1.95e+8  }{ 1.05e+5 }{ 2.91e+8  }{ 8.55e+4 }\\
      \TABROW{    32 }{ 1.56e+6 }{ 8.23e+7  }{ 8.29e+8  }{ 3.65e+5 }{ 1.50e+9  }{ 3.27e+5 }\\
      \TABROW{    64 }{ 8.03e+6 }{ 3.45e+8  }{ 7.62e+9  }{ 2.20e+6 }{ 1.51e+10 }{ 2.08e+6 }\\
      \TABROW{   128 }{ 2.69e+7 }{ 3.11e+8  }{ 2.16e+10 }{ 7.85e+6 }{ 5.00e+10 }{ 7.55e+6 }\\
      \hline
      {}              & \multicolumn{6}{c}{\text{\textsf{HEX} meshes}}\\
      \hline
      \TABROW{     8 }{ 8.94e+5 }{ 1.87e+8  }{ 3.07e+8  }{ 1.24e+5 }{ 9.13e+8     }{ 9.65e+4 }\\
      \TABROW{    16 }{ 4.82e+6 }{ 1.50e+9  }{ 1.80e+9  }{ 6.50e+5 }{ 6.37e+9     }{ 5.75e+5 }\\
      \TABROW{    32 }{ 2.56e+7 }{ 8.47e+9  }{ 1.87e+10 }{ 3.55e+6 }{ 1.65e+10    }{ 3.33e+6 }\\
      \TABROW{    64 }{ 8.98e+7 }{ 1.99e+10 }{ 8.91e+10 }{ 1.57e+7 }{ 2.58e+11    }{ 1.51e+7 }\\
      \TABROW{   128 }{ 1.16e+8 }{ 1.05e+11 }{ 3.24e+11 }{ 5.67e+7 }{ \text{n.a.} }{ 5.53e+7 }\\
      \hline
    \end{tabular}
    \caption{Poisson equation. Comparison of the condition numbers
      obtained with the different stabilization strategies, different
      regularity $k=0,1,2$ and polynomial order $r=3$. The acronym
      ``n.a.'' stands for ``not available'' since the resulting linear
      system was too badly-conditioned to estimate the condition number.}
    \label{tab:poisson:condnum:C012-P3}
  \end{center}
\end{table}
\begin{figure}[!htb]
  \begin{center}
    \begin{tabular}{cc}
      \includegraphics[width=6cm]{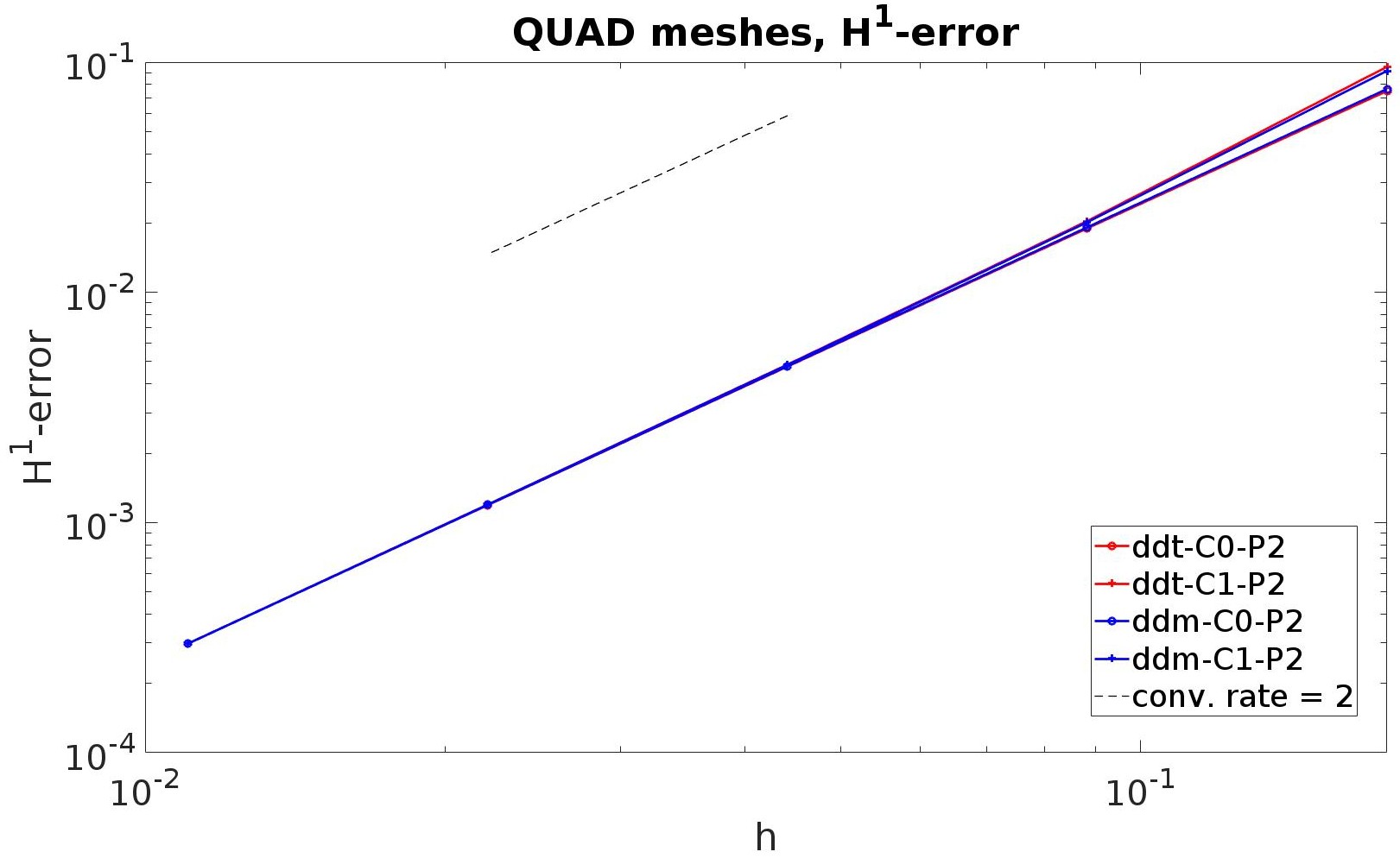}       
      \includegraphics[width=6cm]{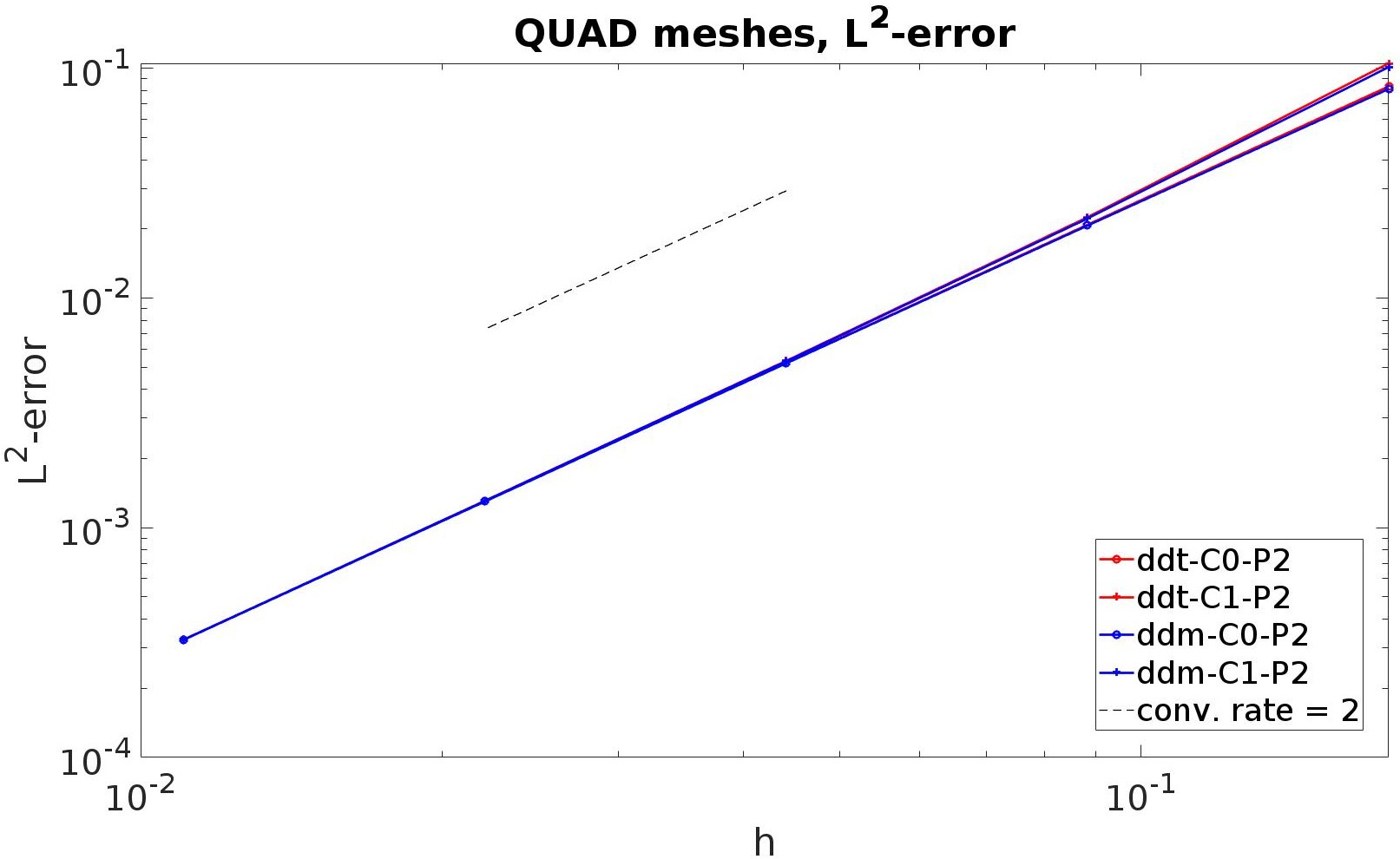}\\     
      \includegraphics[width=6cm]{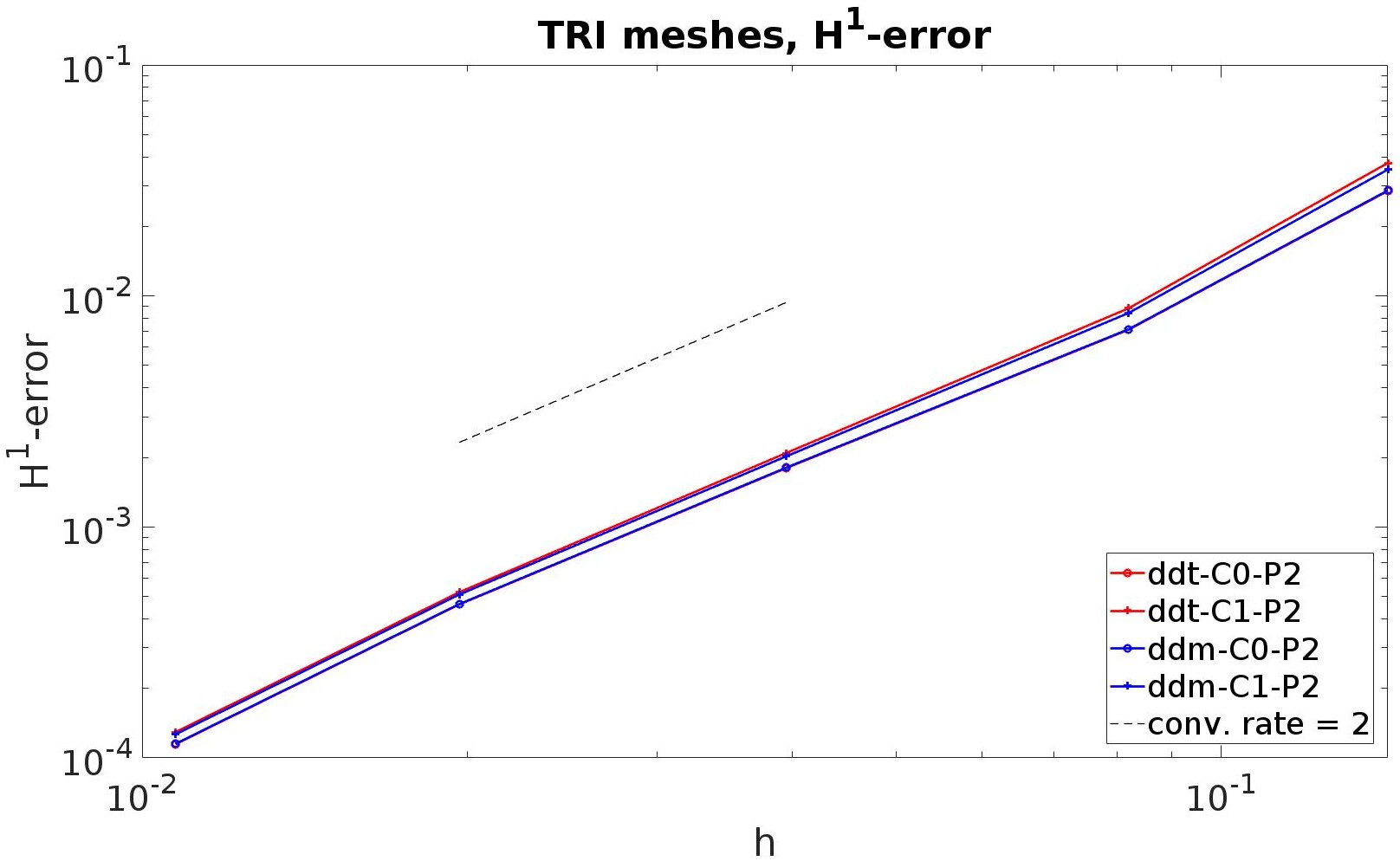}       
      \includegraphics[width=6cm]{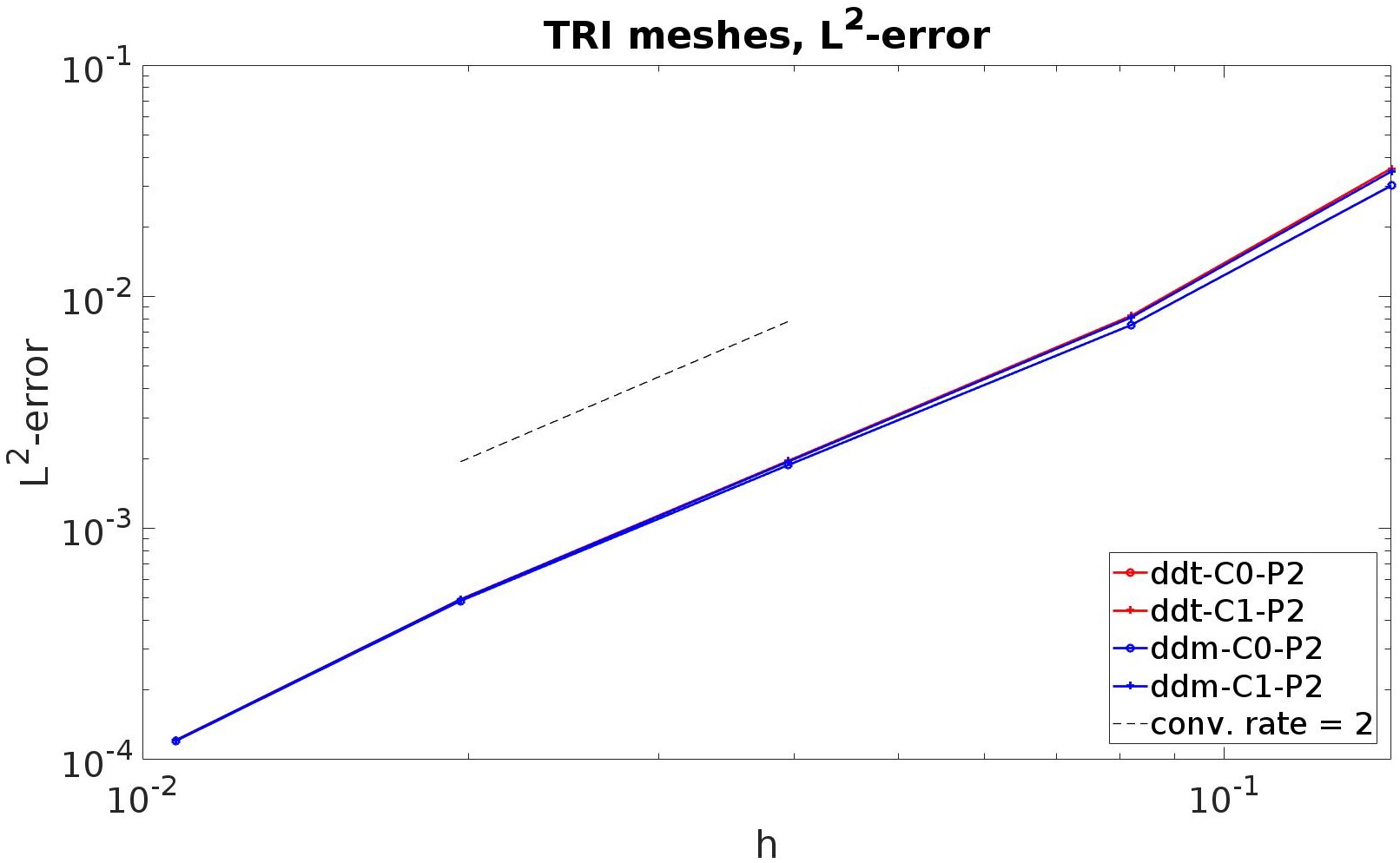}\\     
      \includegraphics[width=6cm]{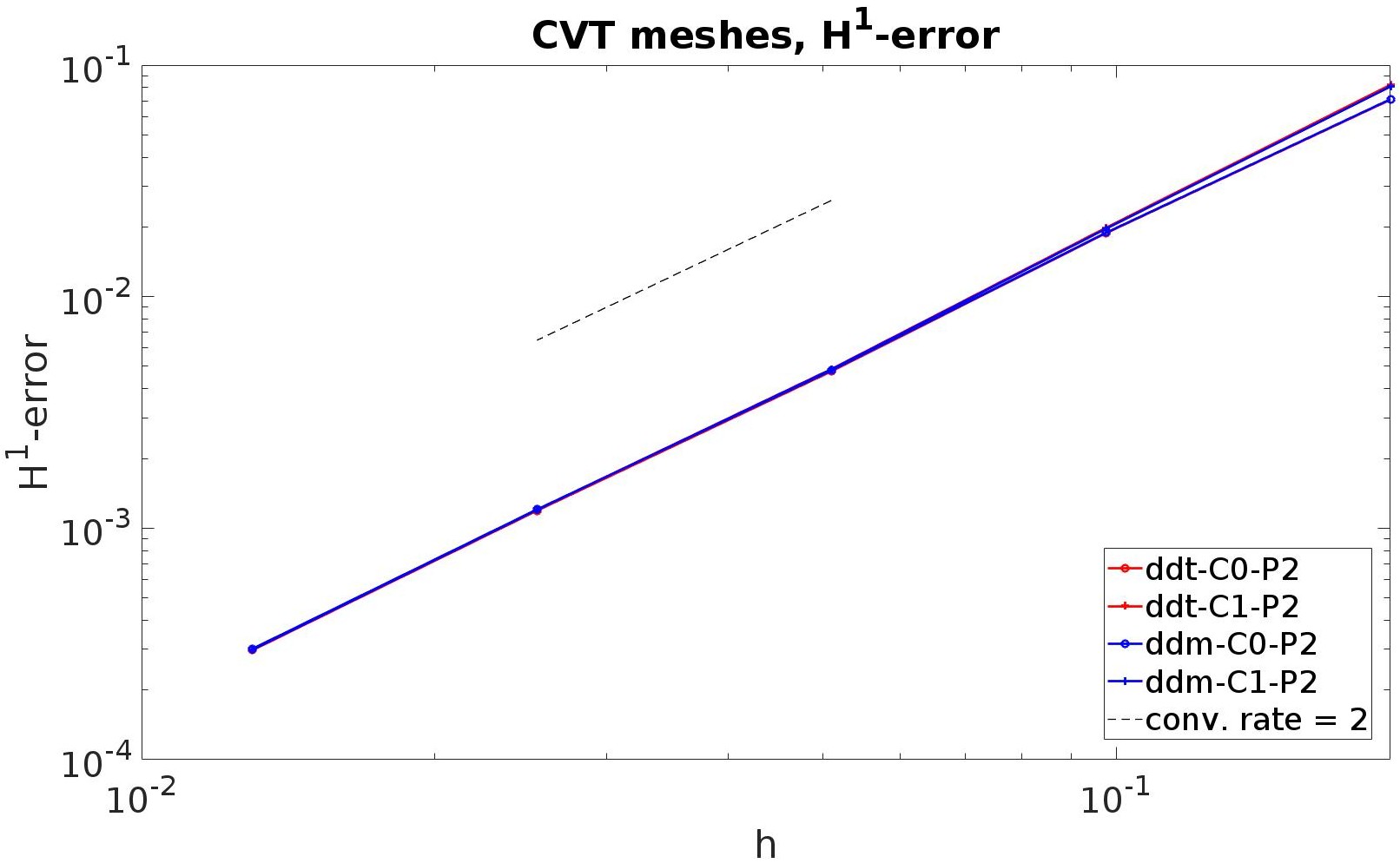}       
      \includegraphics[width=6cm]{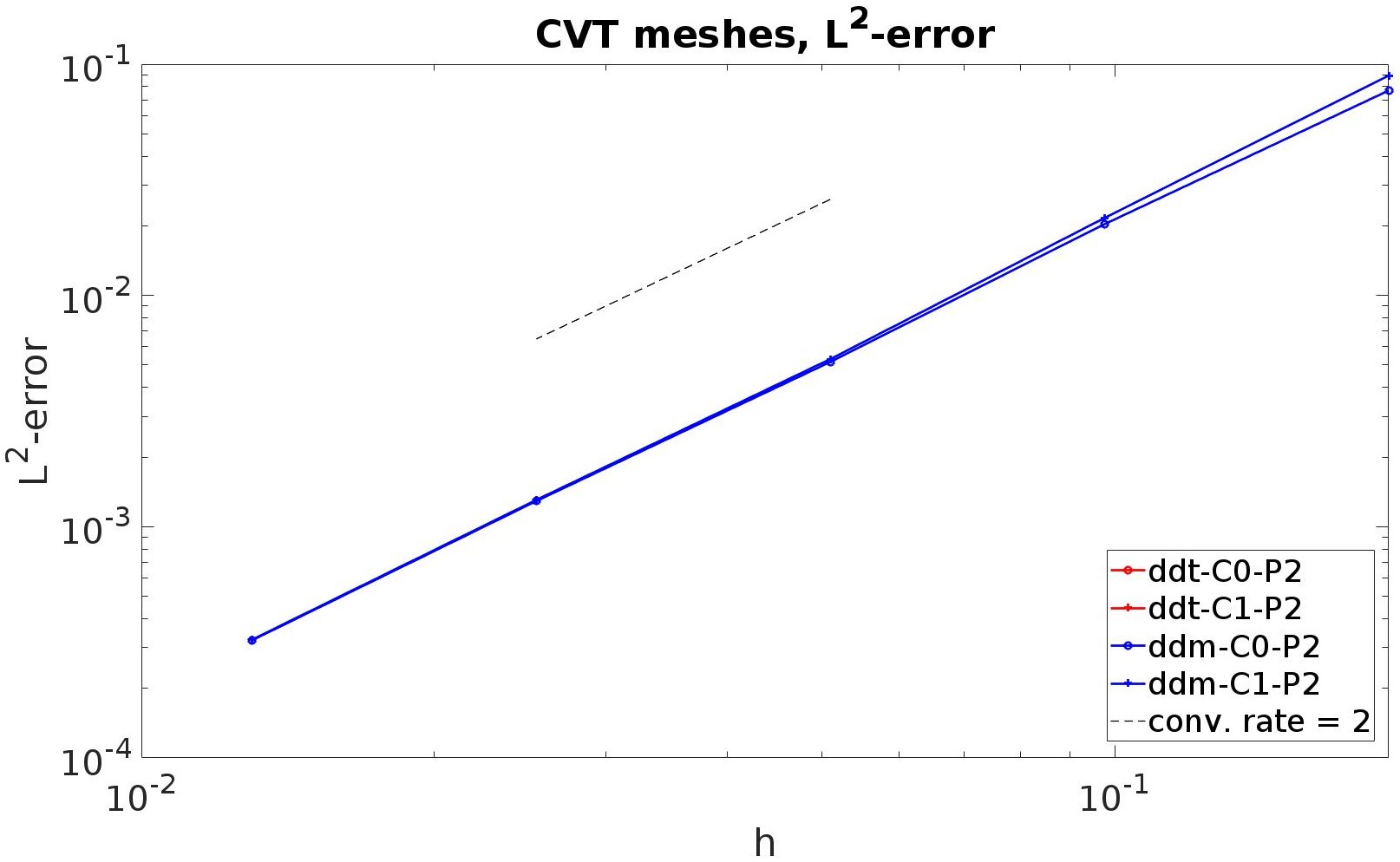}\\     
      \includegraphics[width=6cm]{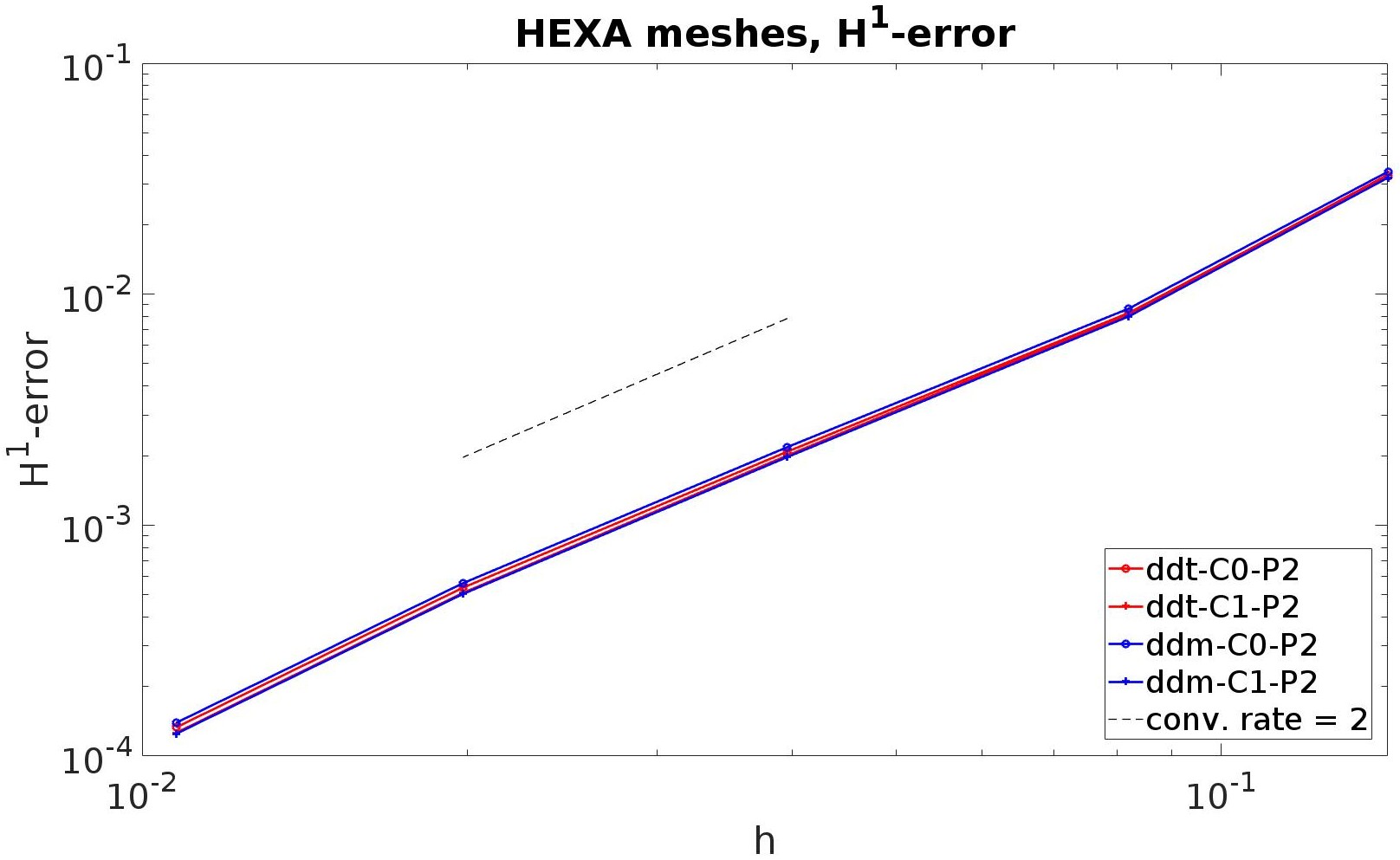}       
      \includegraphics[width=6cm]{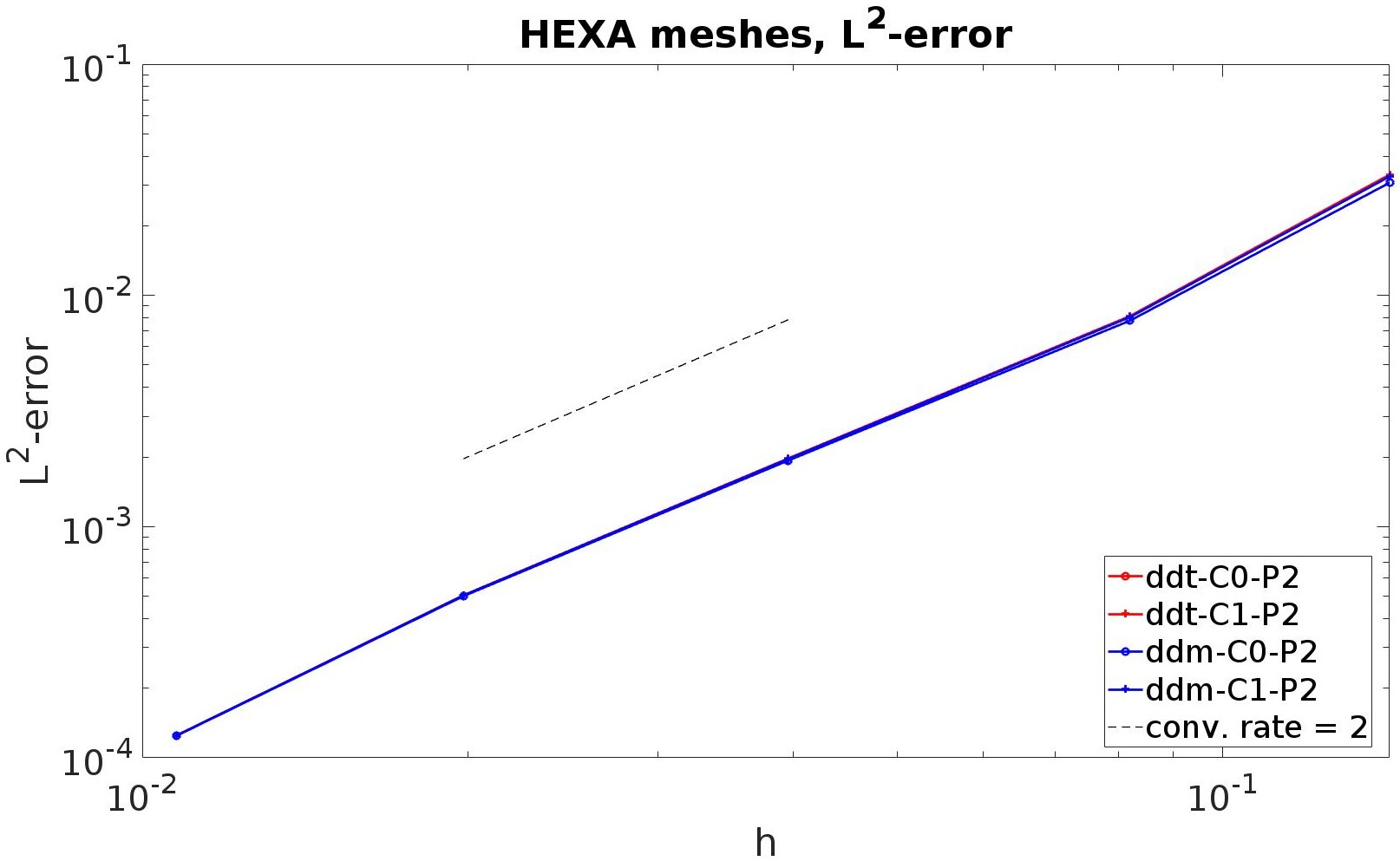}\\     
    \end{tabular}
    \caption{Poisson equation, test case TC1.
      Plots of the error curves versus the mesh size parameter $\hh$
      for the discretization using the (reduced) virtual element space
      of Section~\ref{S:lower} with $p_1=1$, $p_2=1,2$, $r=2$ on
      different polygonal mesh families and stabilization terms.
      The errors are measured using the energy norm (left) and the
      $\LTWO$-norm (right), and are expected to scale proportionally
      to $\hh^2$.  }
    \label{fig:poisson:C012-P3:hmax}
  \end{center}
\end{figure}
\begin{figure}[!htb]
  \begin{center}
    \begin{tabular}{cc}
      \includegraphics[width=6cm]{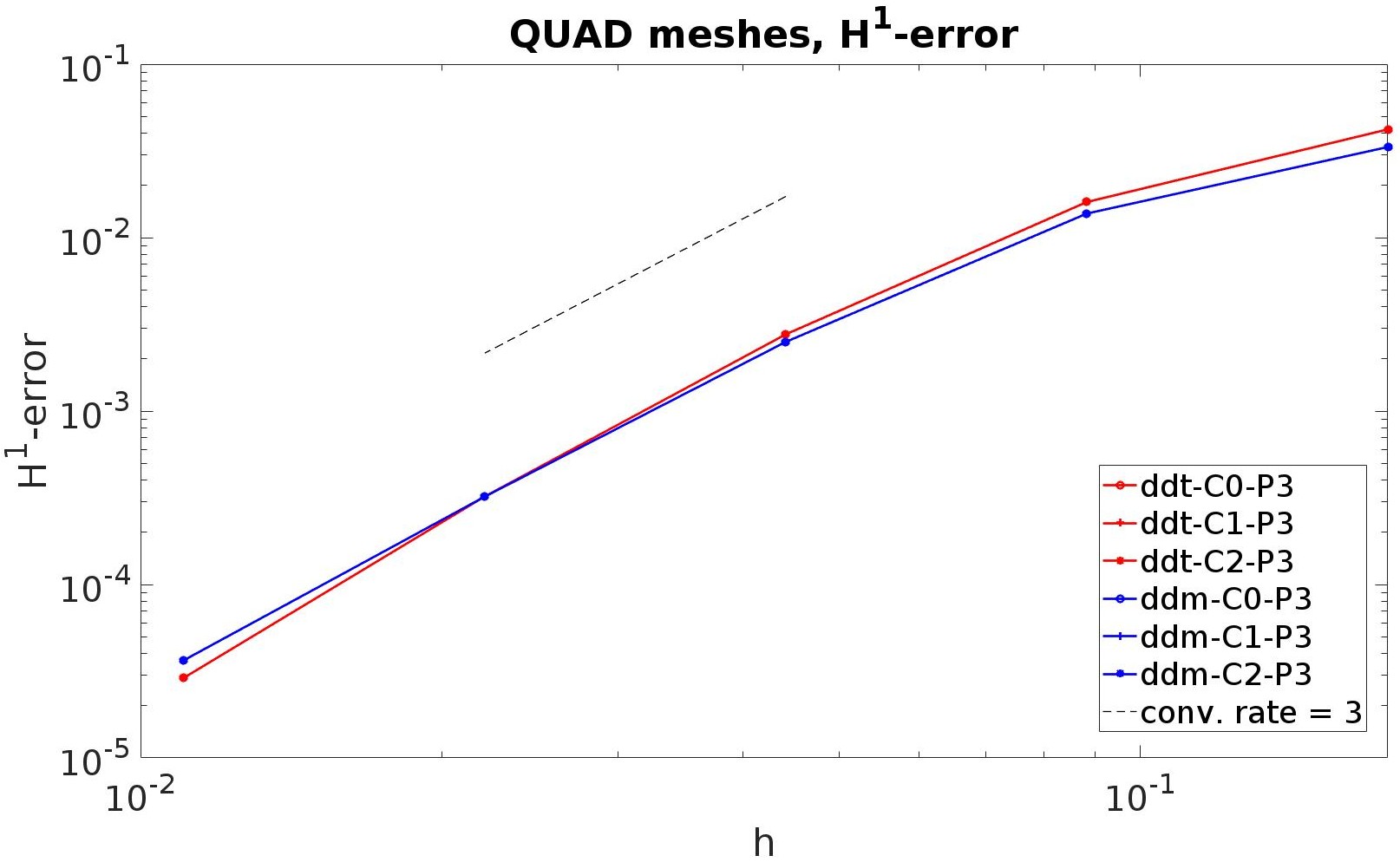}        
      \includegraphics[width=6cm]{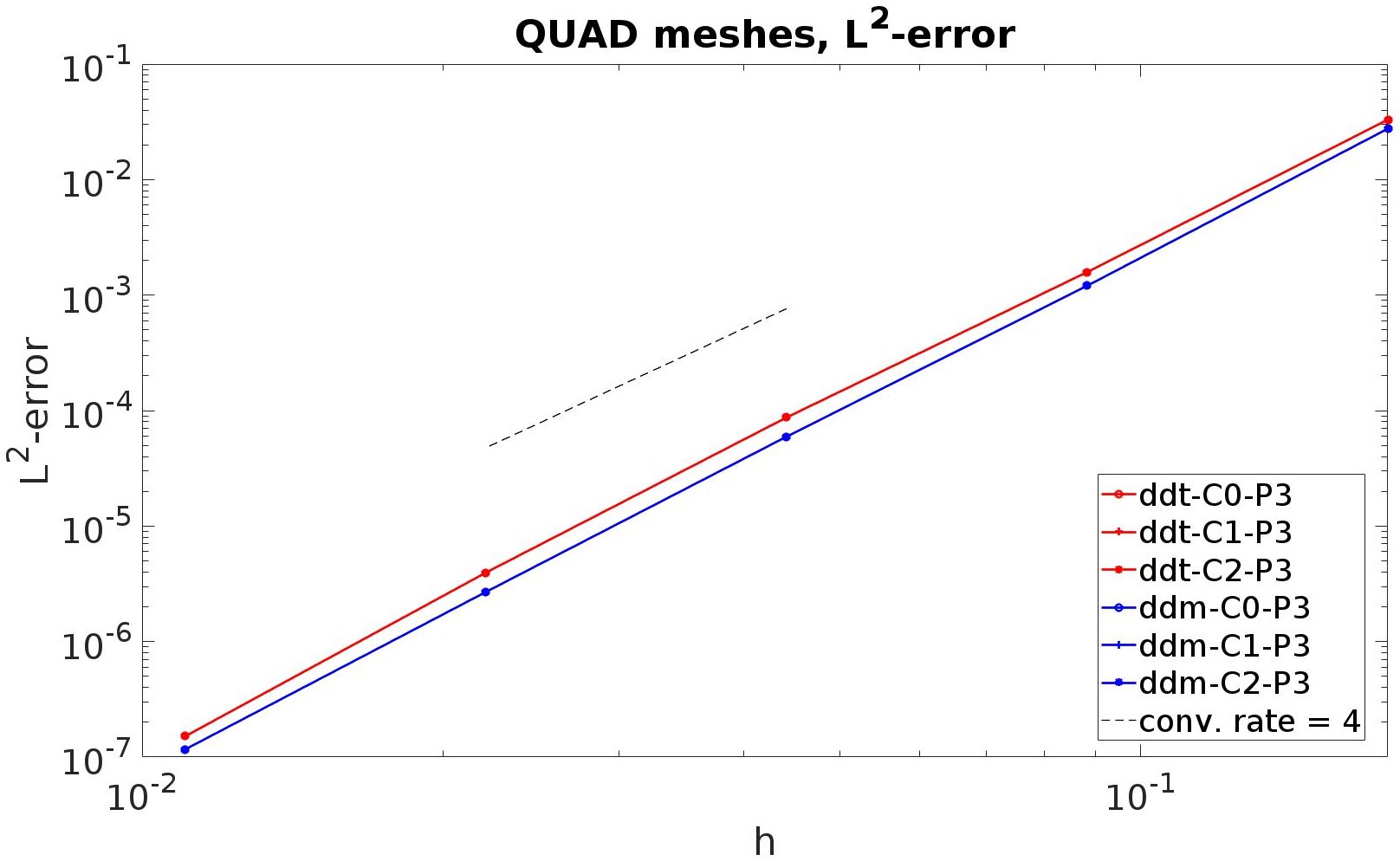}\\      
      \includegraphics[width=6cm]{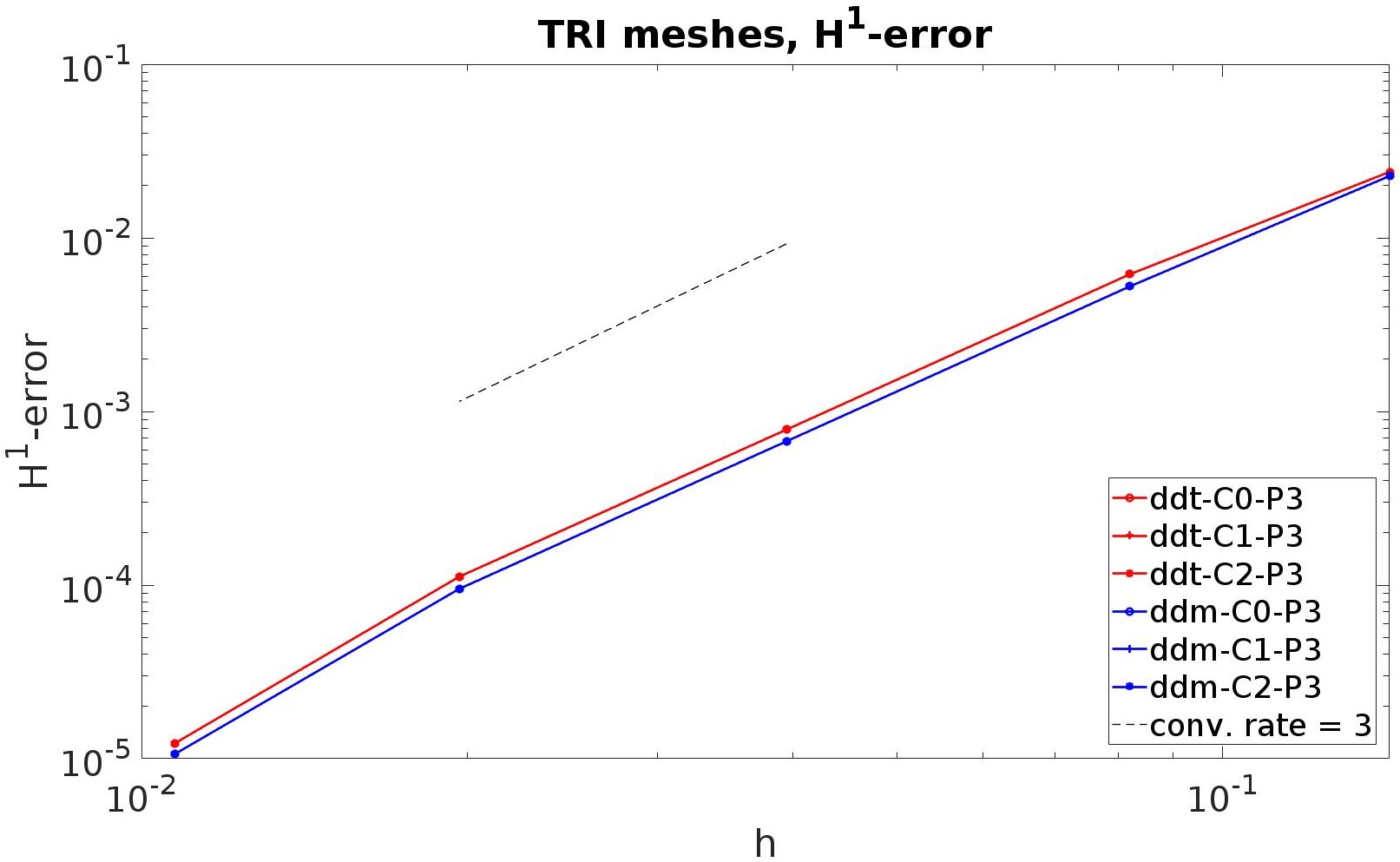}        
      \includegraphics[width=6cm]{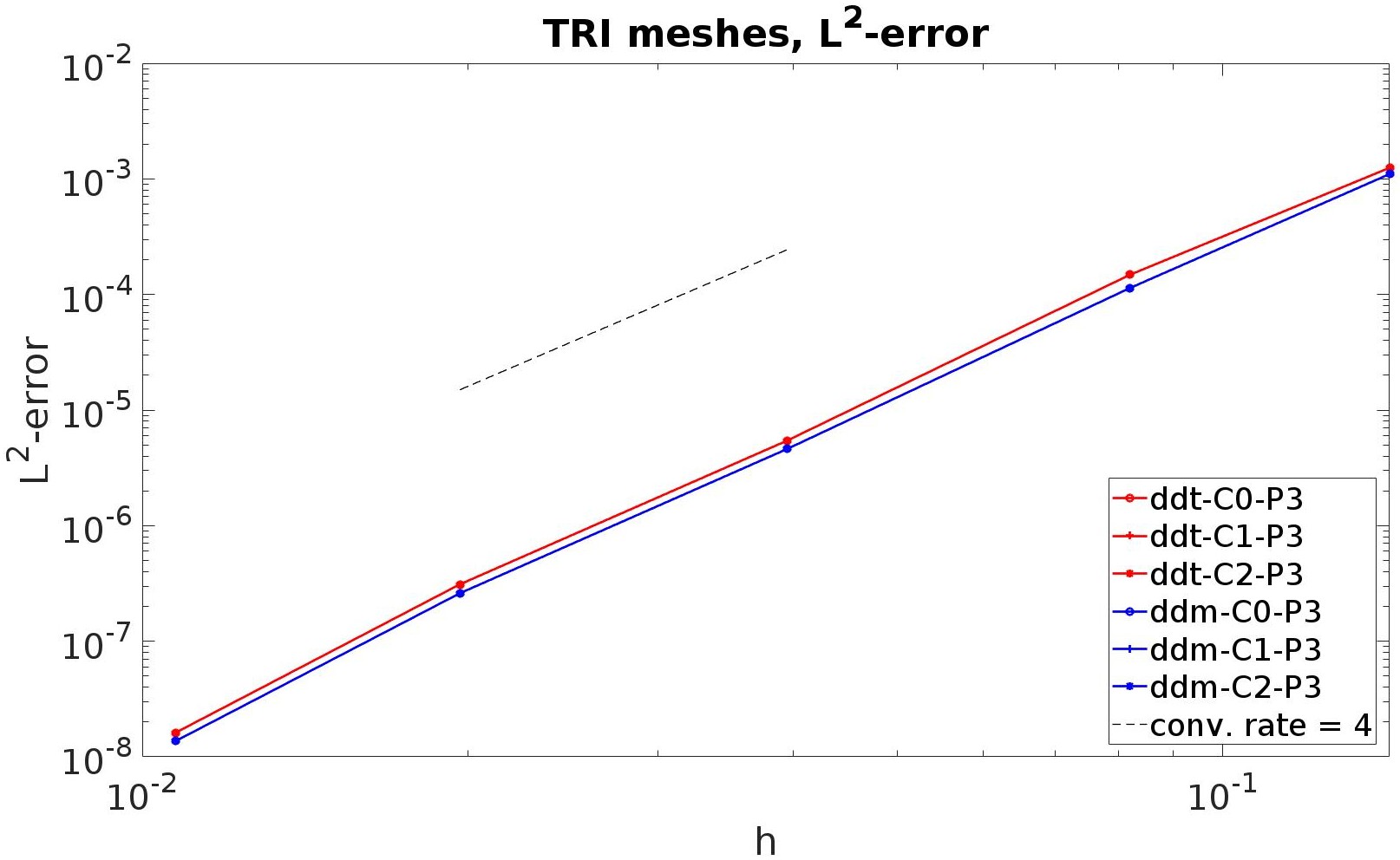}\\      
      \includegraphics[width=6cm]{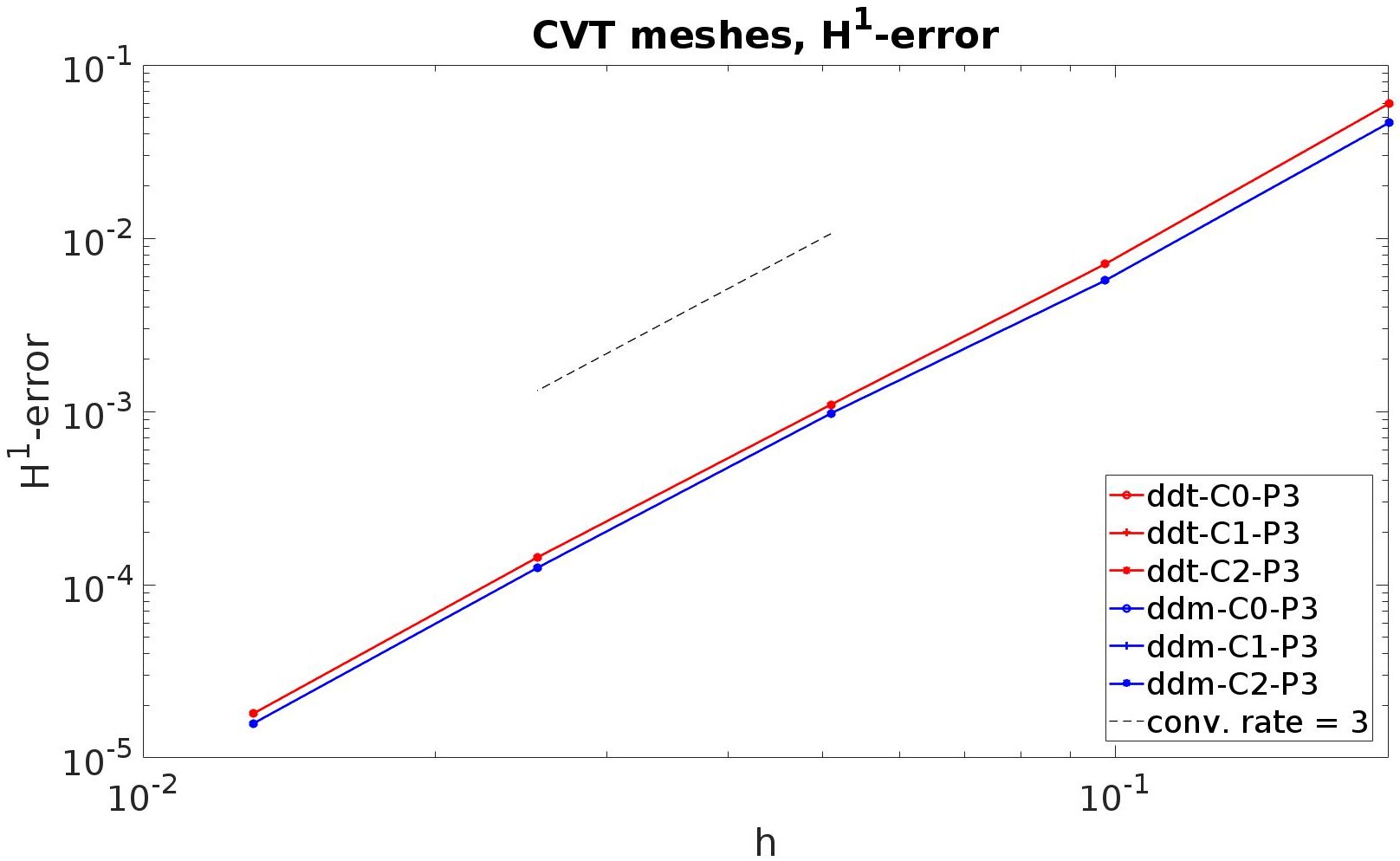}        
      \includegraphics[width=6cm]{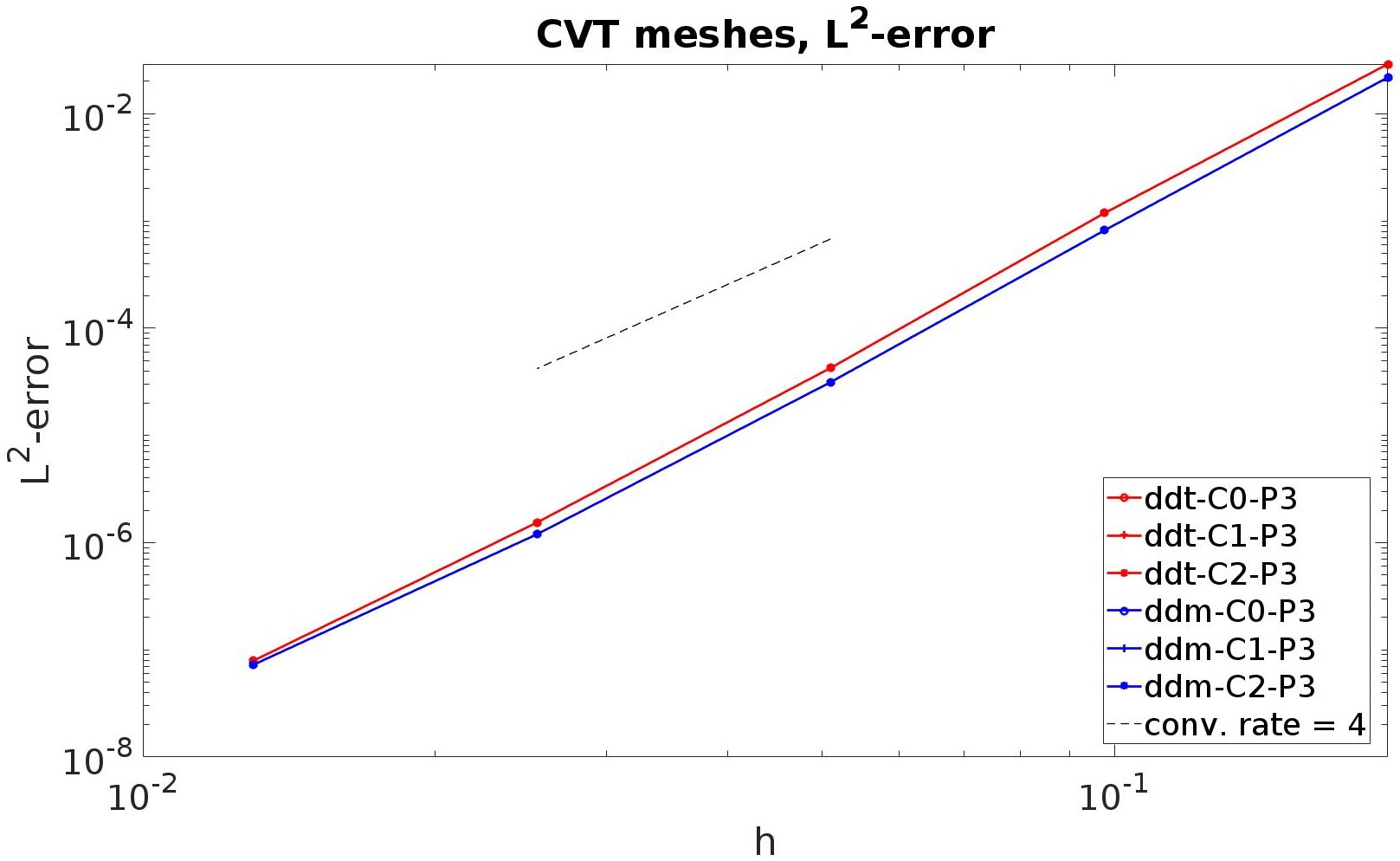}\\      
      \includegraphics[width=6cm]{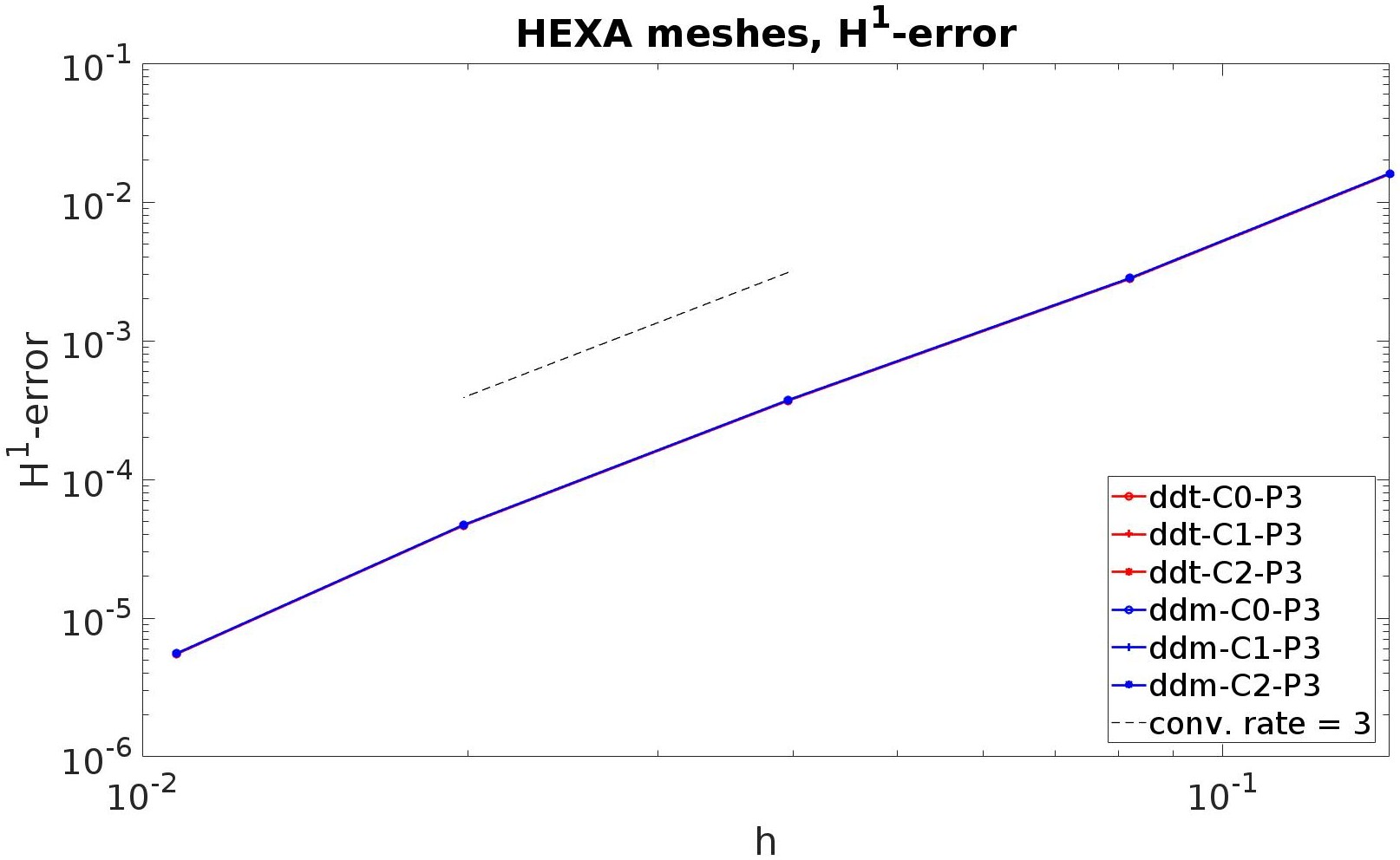}        
      \includegraphics[width=6cm]{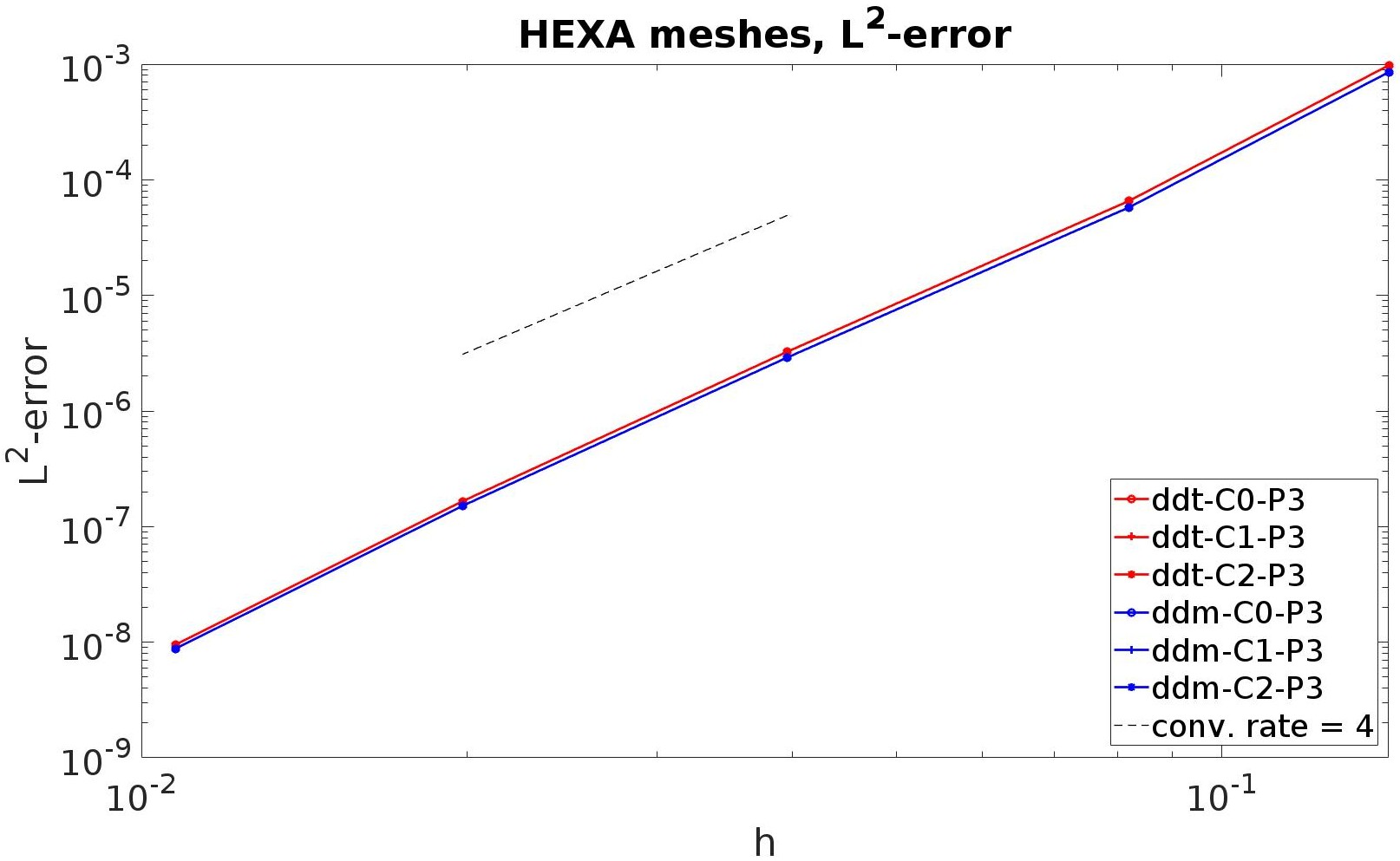}\\      
    \end{tabular}
    \caption{Poisson equation, test case TC2.
      Plots of the error curves versus the number of degrees of
      freedom $\Ndofs$ for the discretization using the (reduced)
      virtual element space of Section~\ref{S:lower} with $p_1=1$,
      $p_2=1,2,3$, $r=3$ on different polygonal mesh families and
      stabilization terms.
      The errors are measured using the energy norm (left) and the
      $\LTWO$-norm (right), and are expected to scale proportionally
      to $\hh^{3}$ and $\hh^4$, respectively.}
    \label{fig:poisson:C01-P2:hmax}
  \end{center}
\end{figure}
\subsection{Biharmonic equation}
\label{sec:numerics:Biharmonic}
In this section, we solve the two-dimensional biharmonic equation
using the conforming $\CS{1}$ virtual element approximation
corresponding to the parameters choice $p_2=p_1=r=2$.
We consider the two stabilization strategies for $\matU=\matI$ and
$\matU=\matD^{\perp}$ and the three possible choices of the parameter
$\alpha_{\mathsf{stab}}$ that are given by:
\begin{itemize}
\item $\alpha_{\mathsf{stab}}=\text{Trace}(\matM_{\P})/3$;
\item $\alpha_{\mathsf{stab}}=1\slash{\mP}$;
\item $\alpha_{\mathsf{stab}}=1\slash{\hh^2}$.
\end{itemize}

In Table \ref{tab:biharm_quad} and Figure~\ref{fig:err_biharm} (first
row from top) we report the computed condition number estimates and
the error curves for the family of quadrilateral meshes
(\textsf{QUAD}).
Note that $\mP=\hh^2$, so the stabilizations for these two
corresponding choices of $\alpha_{\mathsf{stab}}$ coincide and only
one set of results is shown.
All the stabilizations considered seem comparable in terms of the
condition number of the resulting linear system of equations,
exhibiting the expected growth $O(h^{-4})$.
Also they seem comparable in terms of accuracy, since the convergence
rate is the same (1 for the $H^2$ norm and 2 for the $L^\infty$ norm).
However, the stabilization with $\matU=\matI$ and
$\alpha_{\mathsf{stab}}=Trace(\matM_{\P})/3$ yields the best accuracy.

Table \ref{tab:biharm_tri} and Figure \ref{fig:err_biharm} (second
row) report the results obtained on triangular meshes (\textsf{TRI}).
Again all stabilizations considered exhibit the same behaviour in
terms of condition number growth and convergence rate, when refining
the mesh.
Differently from the case of quadrilateral meshes, the best accuracy
is obtained using the stabilization with $\matU=\matI$ and
$\alpha_{\mathsf{stab}}=1/h^2$.
Tables~\ref{tab:biharm_cvt}, \ref{tab:biharm_web} and Figure
\ref{fig:err_biharm} (third and fourth rows) report then the results
obtained on \textsf{CVT} and \textsf{HEX} meshes.
In these two cases, the performance of stabilization terms is
analogous to the one observed on quadrilateral meshes, since the best
accuracy is achieved by the stabilization with $\matU=\matI$ and
$\alpha_{\mathsf{stab}}=Trace(\matM_{\P})/3$.
%
\begin{figure}[!htb]
  \begin{center}
    \begin{tabular}{cc}
      \includegraphics[width=6cm]{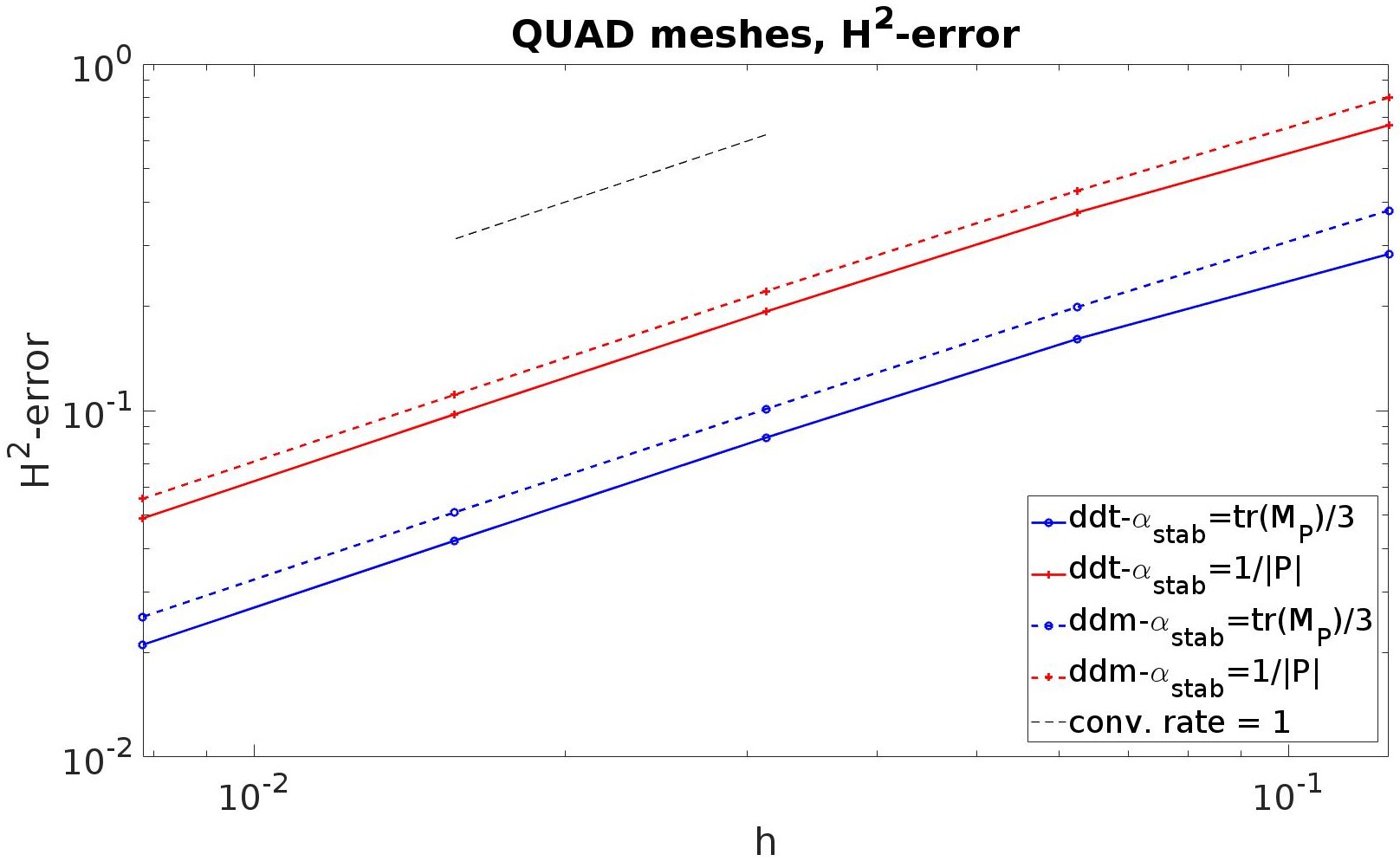}&   
      \includegraphics[width=6cm]{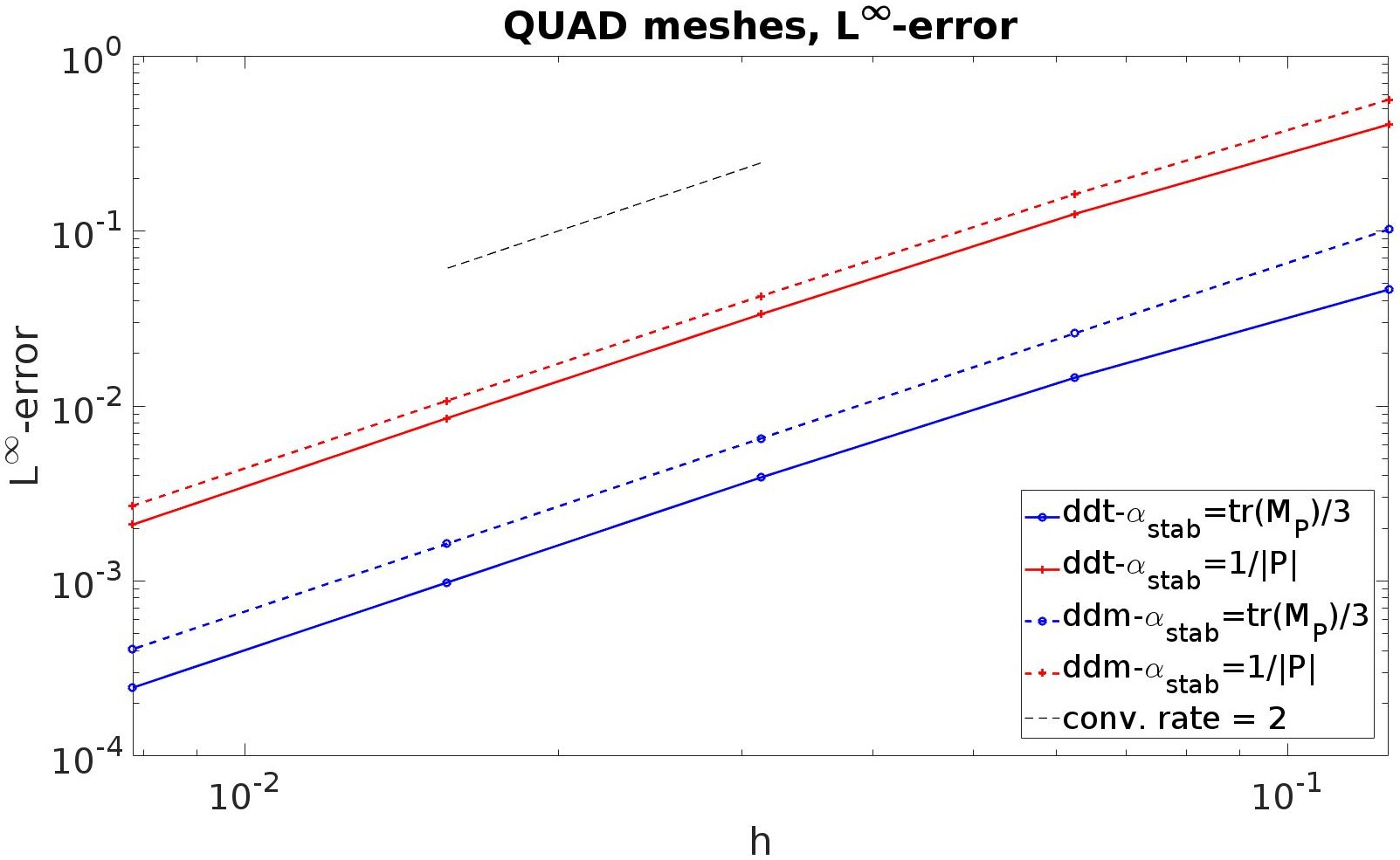}\\  
      \includegraphics[width=6cm]{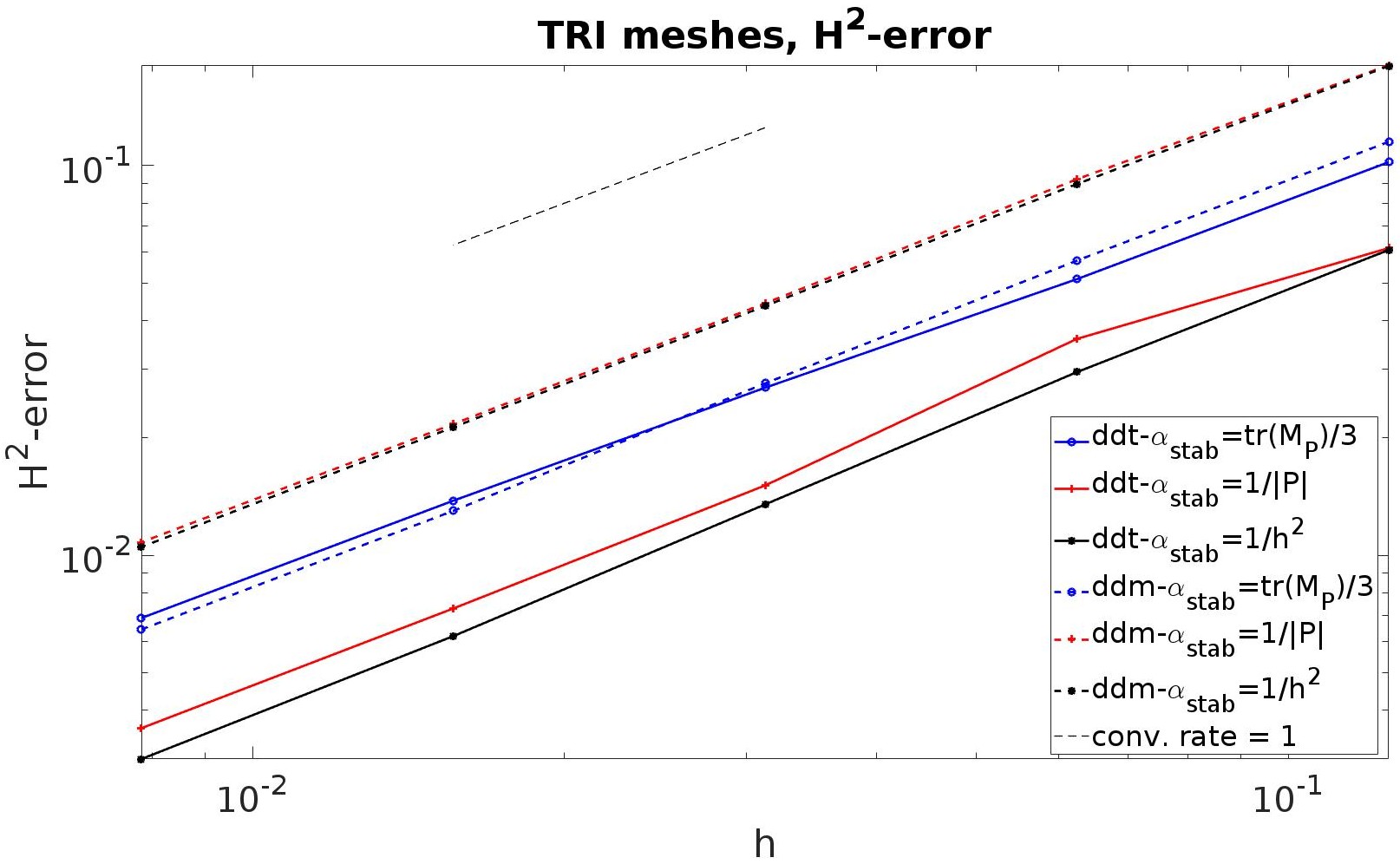}&   
      \includegraphics[width=6cm]{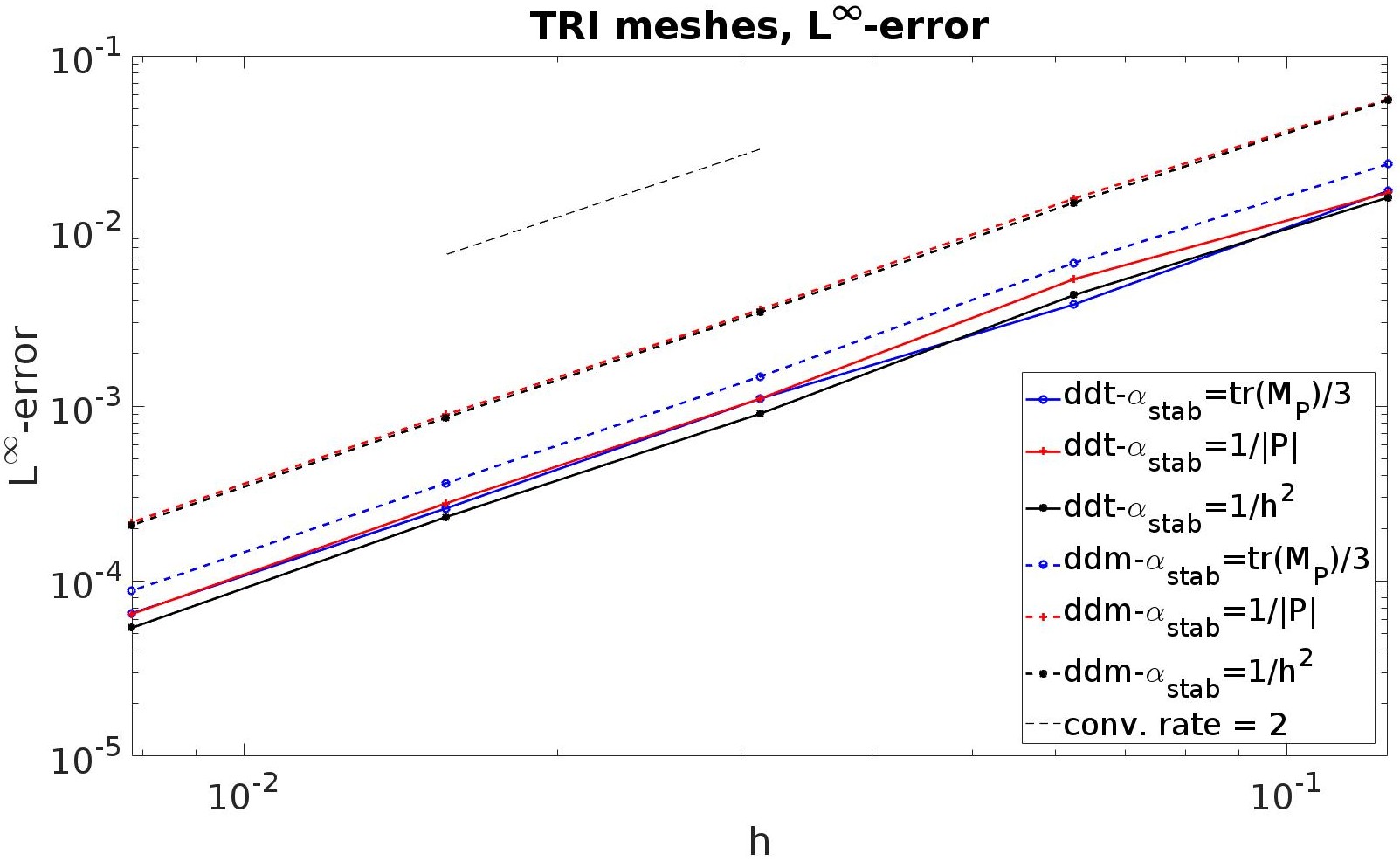}\\  
      \includegraphics[width=6cm]{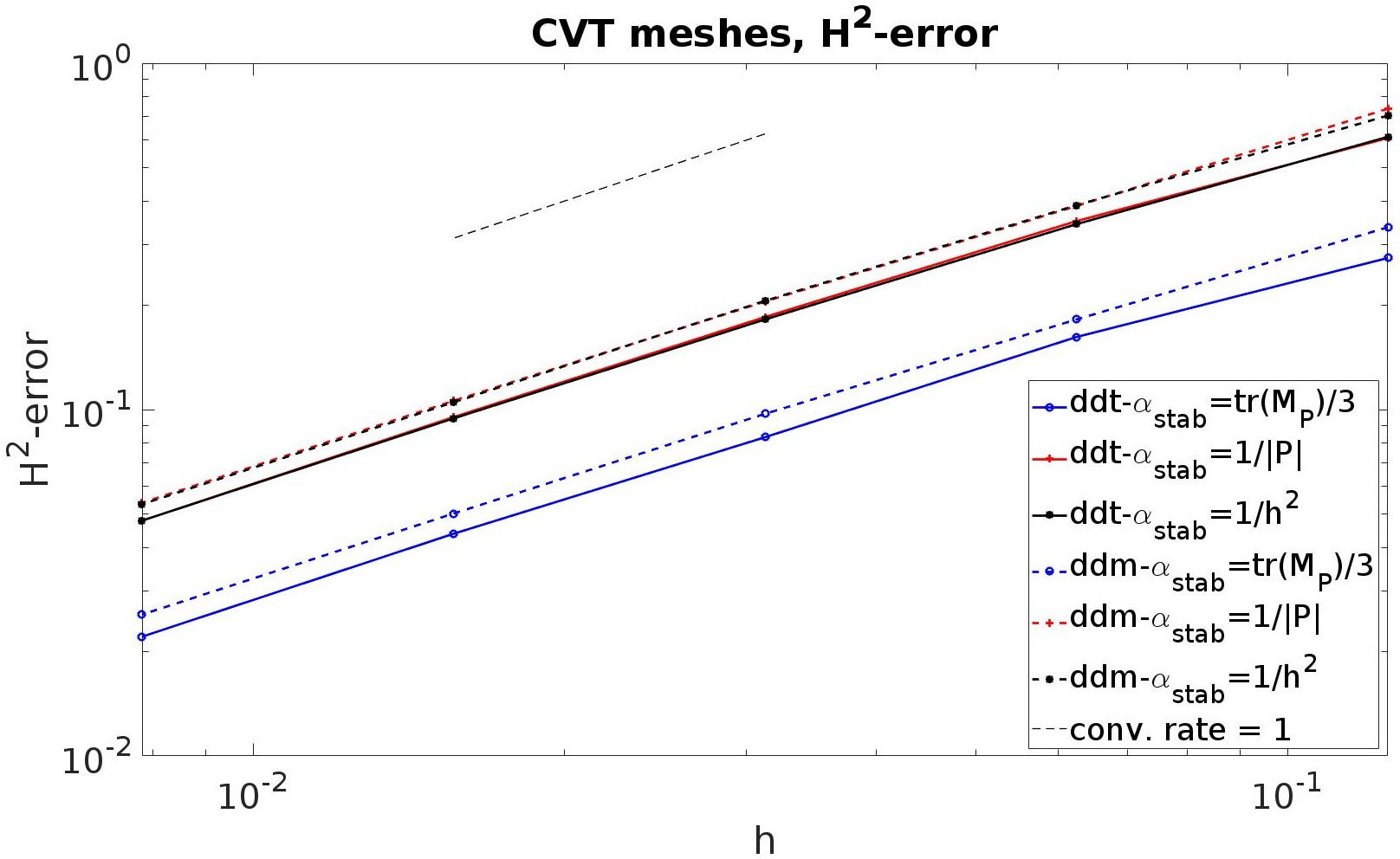}&   
      \includegraphics[width=6cm]{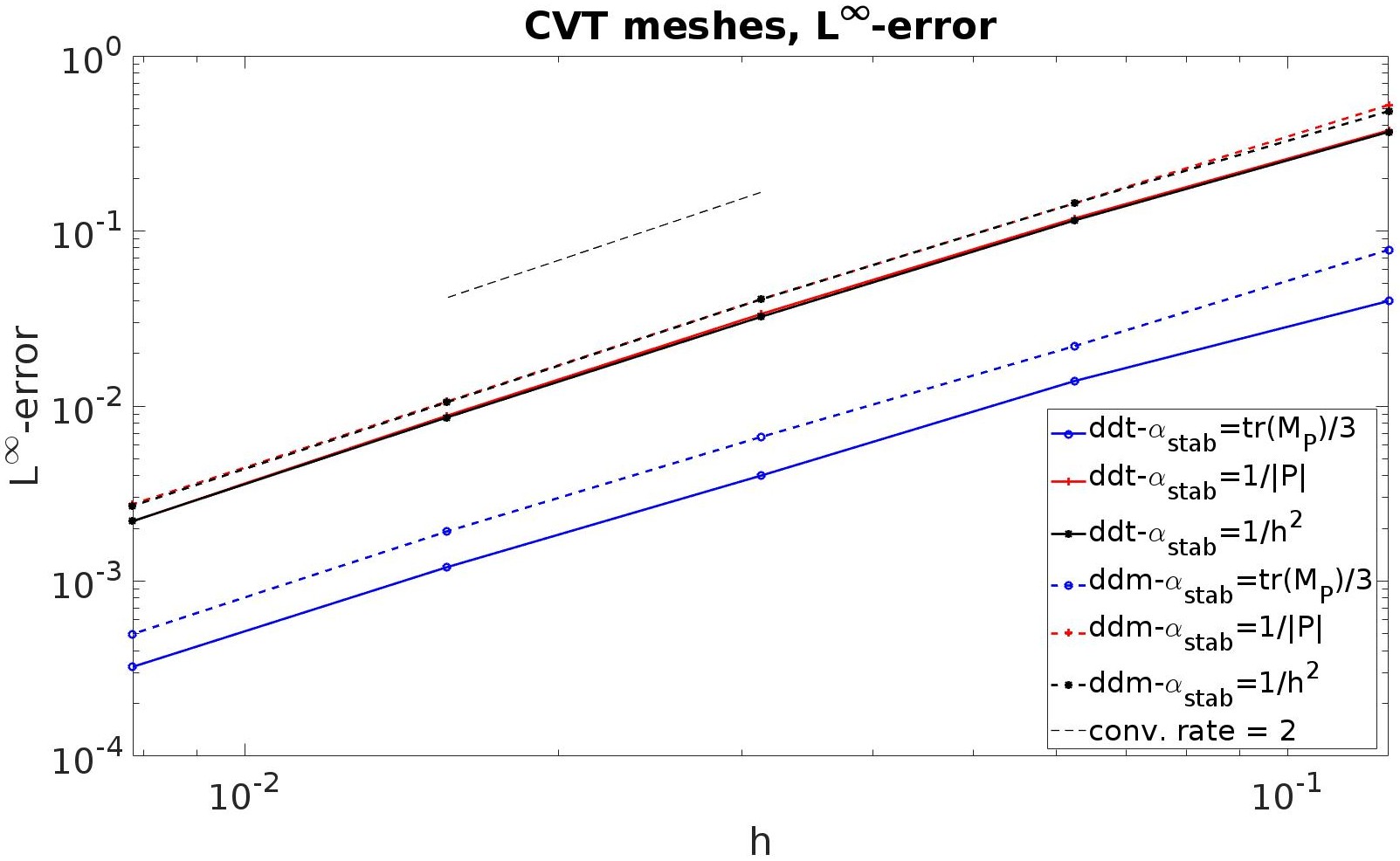}\\  
      \includegraphics[width=6cm]{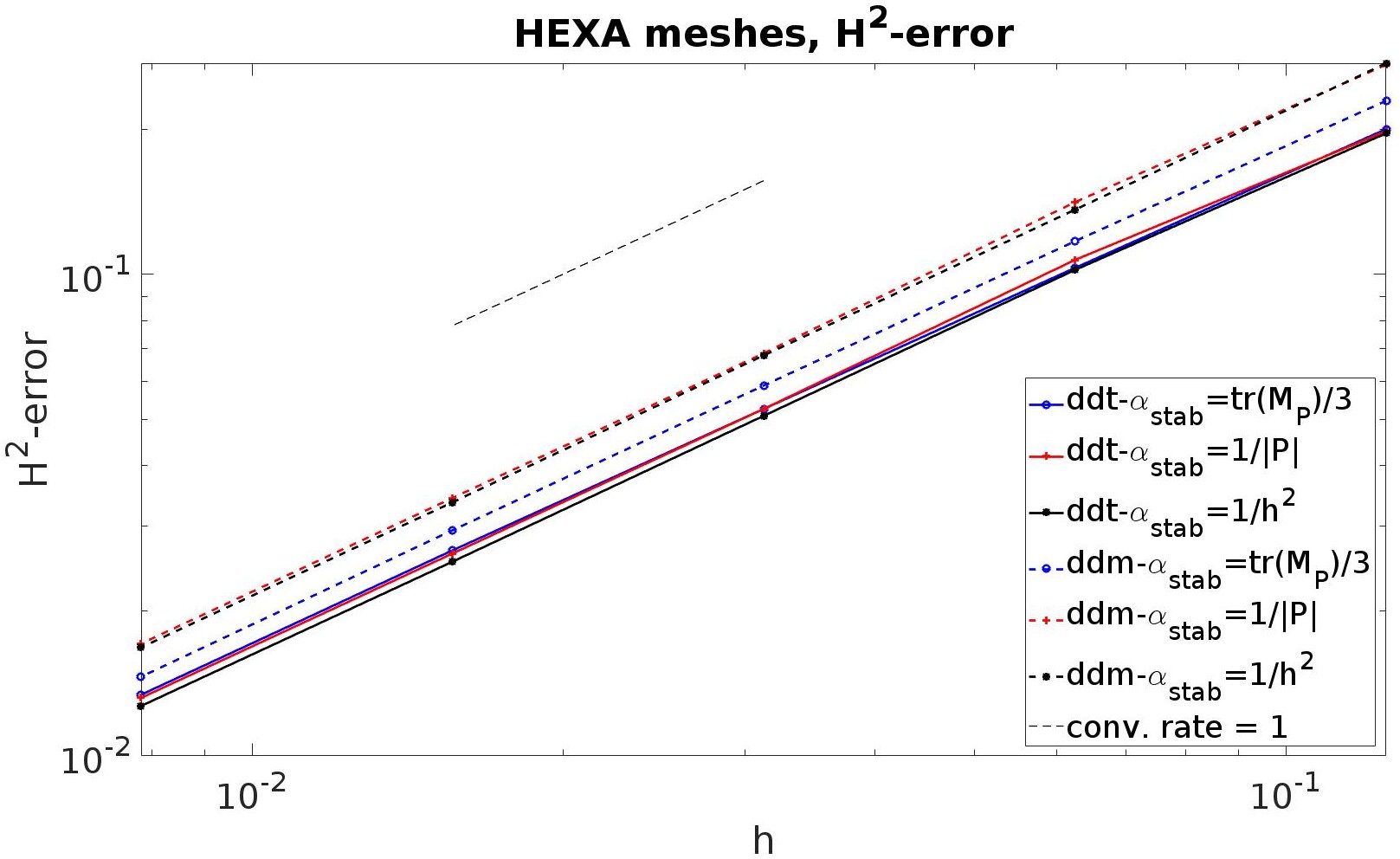}&   
      \includegraphics[width=6cm]{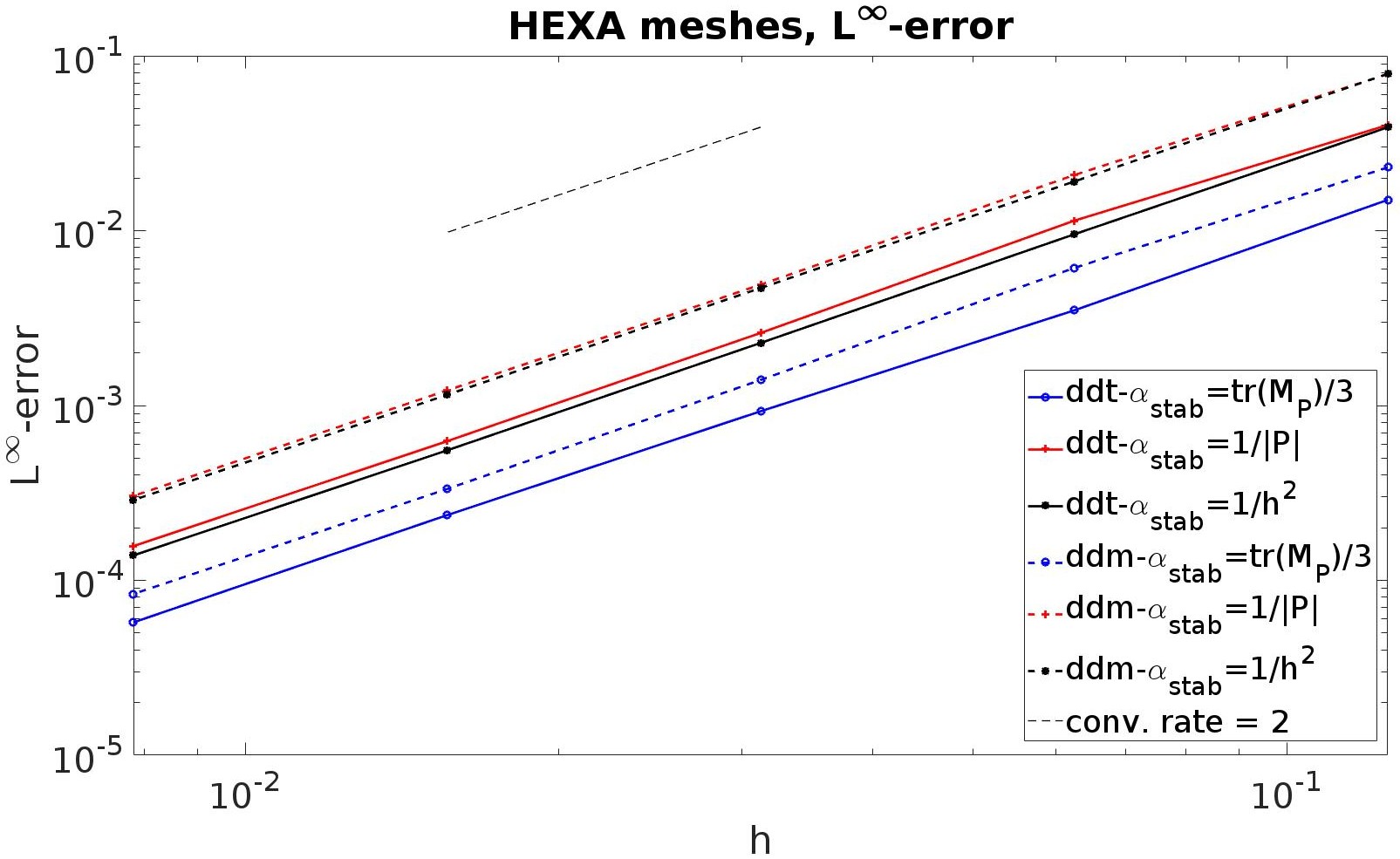}\\  
    \end{tabular}
    \caption{Biharmonic equation.  Plots of the error curves versus
      the mesh size $h$ on different polygonal mesh families and
      stabilization terms.
      The errors are measured using the $\HS{2}$-norm (left panels)
      and the $L^\infty$-norm (right panel), and are expected to scale
      proportionally to $\hh$ and $\hh^2$ , respectively.}
    \label{fig:err_biharm}
  \end{center}
\end{figure}


\renewcommand{\TABROW}[6]{ #1 & #2 & #3 & #4 & #5 & #6}
\begin{table}
  \begin{center}
    \def\arraystretch{1}\tabcolsep=5pt
    \begin{tabular}{r|r|rr|rr}
      \hline
      {}              &           &           \alphaOne &           \alphaTwo \\
      \TABROW{ $1/h$ }{ $\Ndofs$ }{  \stabI }{  \stabD }{  \stabI }{  \stabD }\\
      \hline
      \TABROW{     8 }{      243 }{ 5.77e+2 }{ 1.93e+2 }{ 3.41e+2 }{ 2.08e+3 }\\
      \TABROW{    16 }{      867 }{ 7.68e+3 }{ 2.47e+3 }{ 3.96e+3 }{ 2.28e+3 }\\
      \TABROW{    32 }{     3267 }{ 1.15e+5 }{ 3.65e+4 }{ 5.50e+4 }{ 3.09e+4 }\\
      \TABROW{    64 }{    12675 }{ 1.81e+6 }{ 5.72e+5 }{ 8.42e+5 }{ 4.69e+5 }\\
      \TABROW{   128 }{    49923 }{ 2.88e+7 }{ 9.11e+6 }{ 1.33e+7 }{ 7.40e+6 }\\
      \hline
    \end{tabular}
    \caption{Biharmonic equation, \textsf{QUAD} meshes. Comparison of
      the condition numbers obtained with the different stabilization
      strategies.}
    \label{tab:biharm_quad}
  \end{center}
\end{table}

\renewcommand{\alphaTwo}{\multicolumn{2}{c|}{$\alpha_{\mathsf{stab}}=1\slash{\mP}$}}
\renewcommand{\TABROW}[8]{ #1 & #2 & #3 & #4 & #5 & #6 & #7 & #8 }
\begin{table}
  \begin{center}
    \def\arraystretch{1}\tabcolsep=5pt
    \begin{tabular}{r|r|rr|rr|rr}
      \hline
      {}              &           &           \alphaOne &           \alphaTwo &         \alphaThree \\
      \TABROW{ $1/h$ }{ $\Ndofs$ }{  \stabI }{  \stabD }{  \stabI }{  \stabD }{  \stabI }{  \stabD }\\
      \hline
      \TABROW{     8 }{      369 }{ 2.34e+3 }{ 6.43e+2 }{ 1.46e+3 }{ 5.50e+2 }{ 9.68e+2 }{ 4.96e+2 }\\
      \TABROW{    16 }{     1404 }{ 5.27e+4 }{ 1.31e+4 }{ 3.16e+4 }{ 1.08e+4 }{ 1.90e+4 }{ 9.52e+3 }\\
      \TABROW{    32 }{     5424 }{ 9.46e+5 }{ 2.01e+5 }{ 5.96e+5 }{ 1.71e+5 }{ 3.10e+5 }{ 1.48e+5 }\\
      \TABROW{    64 }{    21507 }{ 1.39e+7 }{ 3.35e+6 }{ 8.76e+6 }{ 2.75e+6 }{ 4.92e+6 }{ 2.40e+6 }\\
      \TABROW{   128 }{    86169 }{ 3.30e+8 }{ 6.42e+7 }{ 2.17e+8 }{ 5.53e+7 }{ 9.97e+7 }{ 4.70e+7 }\\
      \hline
    \end{tabular}
    \caption{Biharmonic equation, \textsf{TRI} meshes. Comparison of the
      condition numbers obtained with the different stabilization
      strategies.}
    \label{tab:biharm_tri}
  \end{center}
\end{table}

\begin{table}
  \begin{center}
    \def\arraystretch{1}\tabcolsep=5pt
    \begin{tabular}{r|r|rr|rr|rr}
      \hline
      {}              &           &           \alphaOne &           \alphaTwo &         \alphaThree \\
      \TABROW{ $1/h$ }{ $\Ndofs$ }{  \stabI }{  \stabD }{  \stabI }{  \stabD }{  \stabI }{  \stabD }\\
      \hline
      \TABROW{     8 }{      474 }{ 6.42e+2 }{ 1.95e+2 }{ 3.22e+2 }{ 1.87e+2 }{ 3.40e+2 }{ 1.75e+2 }\\
      \TABROW{    16 }{     1704 }{ 8.66e+3 }{ 2.82e+3 }{ 4.32e+3 }{ 2.09e+3 }{ 3.97e+3 }{ 2.11e+3 }\\
      \TABROW{    32 }{     6438 }{ 1.55e+5 }{ 4.14e+4 }{ 6.77e+4 }{ 2.99e+4 }{ 6.52e+4 }{ 3.05e+4 }\\
      \TABROW{    64 }{    24921 }{ 2.90e+6 }{ 6.70e+5 }{ 1.06e+6 }{ 4.98e+5 }{ 9.74e+5 }{ 4.52e+5 }\\
      \TABROW{   128 }{    98724 }{ 4.53e+7 }{ 1.14e+7 }{ 1.99e+7 }{ 4.98e+5 }{ 1.56e+7 }{ 8.00e+6 }\\
      \hline
    \end{tabular}
    \caption{Biharmonic equation, \textsf{CVT} meshes. Comparison of
      the condition numbers obtained with the different stabilization
      strategies.}
    \label{tab:biharm_cvt}
  \end{center}
\end{table}

\begin{table}
  \begin{center}
    \def\arraystretch{1}\tabcolsep=5pt
    \begin{tabular}{r|r|rr|rr|rr}
      \hline
      {}              &           &           \alphaOne &           \alphaTwo &         \alphaThree \\
      \TABROW{ $1/h$ }{ $\Ndofs$ }{  \stabI }{  \stabD }{  \stabI }{  \stabD }{  \stabI }{  \stabD }\\
      \hline
      \TABROW{     8 }{     1371 }{ 1.33e+4 }{ 2.68e+3 }{ 6.70e+3 }{ 2.20e+3 }{ 4.51e+3 }{ 1.99e+3 }\\
      \TABROW{    16 }{     5415 }{ 3.52e+5 }{ 5.57e+4 }{ 1.94e+5 }{ 4.50e+4 }{ 8.87e+4 }{ 3.80e+4 }\\
      \TABROW{    32 }{    21303 }{ 5.46e+6 }{ 8.81e+5 }{ 2.82e+6 }{ 7.07e+5 }{ 1.35e+6 }{ 5.89e+5 }\\
      \TABROW{    64 }{    85251 }{ 7.79e+7 }{ 1.44e+7 }{ 4.30e+7 }{ 1.09e+7 }{ 2.52e+7 }{ 9.58e+6 }\\
      \TABROW{   128 }{   343131 }{ 1.04e+9 }{ 2.62e+8 }{ 8.72e+8 }{ 2.20e+8 }{ 4.26e+8 }{ 1.83e+8 }\\
      \hline
    \end{tabular}
    \caption{Biharmonic equation, \textsf{HEX} meshes. Comparison of
      the condition numbers obtained with the different stabilization
      strategies.}
    \label{tab:biharm_web}
  \end{center}
\end{table}

\section{Conclusion}
\label{sec6:conclusions}

We reviewed the construction of highly regular virtual element
approximations for polyharmonic problems in two spatial dimensions,
recalling the main theoretical convergence results available in the
literature.
Moreover, we performed a set of new two-dimensional numerical tests to
investigate how different stabilizations in the formulation of the VEM
affect the solver performance in terms of condition number of the
resulting linear system and accuracy of the approximation schemes.
For the discretization of the Poisson equation, our numerical results
show that the choice of the stabilization has an almost negligible
effect on condition numbers and accuracy.
On the other hand, the numerical results that we obtained for the
biharmonic equation shows that the choice of the stabilization may
affect significantly the accuracy of approximation. This effect may be
even more pronounced for $p_1>2$ and requires further investigation.
The best overall performance in our tests is provided by the so-called
\emph{dofi-dofi} stabilization. On the basis of the results obtained
regarding the conditioning of the highly regular VEM matrices, we also
believe that it is worth of future investigations the development of
effective preconditioners for VEM approximations of high order
elliptic equations.



\section*{Acknowledgements}
PFA and MV acknowledge the financial support of PRIN research grant
number 201744KLJL ``\emph{Virtual Element Methods: Analysis and
Applications}'' funded by MIUR.  PFA and MV acknowledge the
financial support of INdAM-GNCS.
GM acknowledges the financial support of the ERC Project CHANGE,
which has received funding from the European Research Council under
the European Union’s Horizon 2020 research and innovation program
(grant agreement no.~694515).


\clearpage

\end{document}